\numberwithin{figure}{section}
\numberwithin{table}{section}
\theoremstyle{plain}
\newtheorem{thm}{Theorem}[section]
\crefname{thm}{Theorem}{Theorems}
\newtheorem*{prop*}{Proposition}
\newtheorem*{thm*}{Theorem}
\newtheorem{prop}[thm]{Proposition}
\crefname{prop}{Proposition}{Propositions}
\newtheorem{lem}[thm]{Lemma}
\crefname{lem}{Lemma}{Lemmata}
\newtheorem{cor}[thm]{Corollary}
\crefname{cor}{Corollary}{Corollaries}
\crefname{conj}{Conjecture}{Conjectures}
\crefname{equation}{Equation}{Equations}
\newtheorem{thmx}{Theorem}
\crefname{thmx}{Theorem}{Theorems}
\newtheorem{corx}[thmx]{Corollary}
\theoremstyle{definition}
\newtheorem{dfn}[thm]{Definition}
\newtheorem*{dfn*}{Definition}
\theoremstyle{remark}
\newtheorem{rmk}[thm]{Remark}
\newtheoremstyle{maintheorem}{}{}{\itshape}{}{\bfseries}{}{.5em}{#1 \!\thmnote{\ #3}.}
\theoremstyle{maintheorem}
\newtheorem*{mainthm}{Theorem}
\newtheorem*{maincor}{Corollary}
\let\c@figure\c@thm
\let\c@table\c@thm
\crefname{figure}{Figure}{Figures}
\crefname{table}{Table}{Tables}
\newcommand{\id}{\operatorname{id}}
\newcommand{\im}{\operatorname{im}}
\newcommand{\supp}{\operatorname{supp}}
\def\R{\mathbb{R}}
\def\N{\mathbb{N}}
\def\Z{\mathbb{Z}}
\def\Q{\mathbb{Q}}
\def\1{\mathbbm{1}}
\def\K{\mathbb{K}}
\def\F{\mathbb{F}}
\DeclareMathOperator{\homol}{H}
\DeclareMathOperator{\Hom}{Hom}
\DeclareMathOperator{\PD}{PD}
\newcommand{\D}{\mathcal{D}}
\def\s-{\smallsetminus}
\def\iff{if and only if }
\newcommand{\nov}[3]{{\widehat{#1 #2}^{#3}}}
\newcommand{\novq}[2]{{\widehat{\Q #1}^{#2}}}
\newcommand{\betti}[3]{\beta_{#2}^{(2)}(#1; #3)}
\newcommand{\bettiQ}[2]{\beta_{#2}^{(2)}(#1)}
\newcommand{\typeFP}{\mathtt{FP}}
\newcounter{dawidcomments}
\newcounter{giovannicomments}
\newcounter{samcomments}
\author{Sam P.\ Fisher}
\author{Giovanni Italiano}
\author{Dawid Kielak}
\email{fisher@maths.ox.ac.uk}
\email{italiano@maths.ox.ac.uk}
\email{kielak@maths.ox.ac.uk}
\address{Mathematical Institute,
	Andrew Wiles Building,
	Observatory Quarter,
	University of Oxford,
	Oxford,
	OX2 6GG,
	United Kingdom}
\title{Virtual fibring of Poincar\'e-duality groups}
\dedicatory{Dedicated to Martin R.\ Bridson, on his $60$th birthday}
\begin{document}

\begin{abstract}
    We show that a RFRS Poincar\'e-duality group $G$ admits a virtual epimorphism to the integers whose kernel is itself a Poincar\'e-duality group over every  field if and only if the $L^2$-homology of $G$ vanishes and so do the positive-characteristic variants thereof.
    
    Our investigations yield a more general relationship between cohomology at infinity of groups that algebraically fibre and their fibres. In particular, we show that if the fundamental group of  an aspherical manifold of dimension at least three algebraically fibres, then the fibre is one ended.
\end{abstract}

\maketitle

\section{Introduction}

A smooth manifold that fibres over the circle can be understood not only as a static object, but also as a manifold of codimension one, the fibre, changing over time, with the dynamics controlled by the monodromy. This description is indeed helpful in many cases, for example in three-manifolds, where our understanding of the fibres and their homeomorphisms is very detailed.

If a compact connected smooth manifold $M$ fibres over the circle with fibre $N$ (which then is also a manifold), on the level of fundamental groups there is a short exact sequence
\[
\mathbbm{1} \to \pi_1(N) \to \pi_1(M) \to \Z \to \mathbbm{1}.
\]
In particular, we get a surjective homomorphism $\pi_1(M) \to \Z$ whose kernel $K=\pi_1(N)$ is the fundamental group of a compact manifold. If $M$ is additionally aspherical, then so is $N$ and we deduce that the kernel $K$ is a group admitting a compact classifying space (a group of type $\mathtt{F}$), and hence the cellular chain complex of the universal cover of the classifying space is a finite resolution by finitely generated projective $\Z K$-modules of the trivial module $\Z$ (that is, $K$ is of type $\typeFP$). Hence, admitting a fibration over the circle is reflected in the algebraic structure of $\pi_1(M)$.

This statement has a strong converse in the realm of three-manifolds: as proved by Stallings \cite{Stallings1962},  if the fundamental group of a compact irreducible three-manifold maps onto the integers with a finitely generated kernel that is not $\Z /2\Z$, then the three-manifold fibres over the circle in a way inducing the given epimorphism. The condition on the kernel not being $\Z /2\Z$ can now be dropped thanks to the resolution of the Poincar\'e Conjecture.

In higher dimensions, there is a result of Farrell \cite{Farrell1967}: if $M$ is a closed aspherical smooth manifold of dimension at least six, then an epimorphism $\pi_1(M) \to \Z$ is induced by a fibration \iff its kernel admits a model for its classifying space that is finitely dominated, and a certain $K$-theoretic obstruction vanishes. The first condition can be rephrased by asking for the kernel to be finitely presented and of type $\typeFP$.

The situation changes if instead of asking whether a specific epimorphism to the integers comes from fibring, we ask if such an epimorphism exists at all.
Going back to three-manifolds, a central theme in their study that emerged due to the groundbreaking insights of Thurston was that it is often interesting not only to understand when a given manifold fibres over the circle, but also when it virtually fibres, that is, admits a finite-sheeted covering that fibres. Thurston conjectured that this should be the case for all finite-volume hyperbolic three-manifolds, and this conjecture was confirmed by Wise \cite{Wise2012}  in the cusped case and  Agol \cite{Agol2013} in the closed case. 

Both proofs consist of two main steps. The first step shows that fundamental groups of hyperbolic three-manifolds virtually (i.e. up to finite index) have the RFRS property.
For a finitely generated group, the RFRS condition is equivalent to being residually \{virtually abelian and locally indicable\}, and so it is a strong residual property. To establish that a group is virtually RFRS, one usually goes via the Haglund--Wise theory of special cube complexes \cite{HaglundWise2008}, and this is indeed how Wise and Agol proceed, in the cusped case using in fundamental ways the work of Culler--Shalen~\cite{CullerShalen1984}, and in the closed case that of Kahn--Markovi\'c \cite{KahnMarkovic2012} and Bergeron--Wise \cite{BergeronWise2012}.

The second step of the proofs is an application of another theorem of Agol~\cite{Agol2008}, stating that a compact irreducible three-manifold with RFRS fundamental group virtually fibres \iff its Euler characteristic is zero. There are many ways of computing the Euler characteristic; for our purpose, we will focus on its being the alternating sum of Betti numbers, and thus a homological invariant. Hence, Agol established that for RFRS three-manifold groups, virtual fibring is controlled by a homological invariant.

This statement has  algebraic counterparts: first it was shown in \cite{Kielak2020a} that if $G$ is an infinite RFRS group, then $G$ admits a virtual epimorphism to the integers with a finitely generated kernel \iff the first $L^2$-Betti number of $G$ vanishes. Soon afterwards, Jaikin-Zapirain \cite{Jaikin-Zapirain2021}*{Appendix} introduced the theory of $L^2$-Betti numbers over ground fields other than $\Q$ for RFRS groups. This was then used by the first author, who 
in \cite{Fisher2024} showed that $G$ admits a virtual epimorphism to $\Z$ with kernel of type $\typeFP_n(\K)$ \iff the $L^2$-Betti numbers of $G$ over $\K$ vanish up to degree $n$. 
 We will introduce all of these notions in the main text -- the key point for now is that, as in Agol's case, virtual algebraic fibring is controlled by homological invariants, namely various versions of $L^2$-homology.

The question motivating our investigations here is: do homological invariants control virtual fibring of compact aspherical manifolds with RFRS fundamental groups also beyond dimension three? In view of Farrell's result, one has to find ways of controlling finiteness properties of kernels of virtual epimorphisms to $\Z$, and the $K$-theoretic obstruction. Thanks to the work of Siebenmann \cite{Siebenmann1970}, we see that this obstruction lives in the Whitehead group of the manifold, and this group is known to vanish in many cases of interest for us, most notably when the fundamental group of the manifold is compact special in the sense of Haglund--Wise -- such groups are $\mathrm{CAT}(0)$, and hence satisfy the Farrell--Jones Conjecture, as proved by Bartels--L\"uck \cite{BartelsLueck2012}. Therefore, it is reasonable to focus first  on securing the relevant finiteness properties.

Establishing finite presentability seems to be out of reach of current methods. We therefore focus on type $\typeFP$. Since this is a solely homological property, it would be too restrictive to stay within the realm of compact aspherical manifolds. A more natural setting is provided by Poincar\'e-duality groups -- they are of type $\typeFP$, and their homology is related to their cohomology precisely in the same way as in the case of compact aspherical manifolds. More generally, given a field $\K$ we can talk about Poincar\'e duality over $\K$, where the group is of type $\typeFP(\K)$ and the duality between homology and cohomology only applies to $\K G$-modules as coefficients. In this generality, we offer the following result.

\begin{thmx}\label{main pd single field}
	Let $\K$ be a field and $G$ be a RFRS orientable  Poincar\'e-duality group of dimension $n>0$ over $\K$. The following are equivalent:
	\begin{enumerate}
		\item There exists a finite-index subgroup $G_0$ in $G$ and an epimorphism $\phi \colon G_0 \to \Z$ such that $\ker \phi$ is an orientable  Poincar\'e-duality group of dimension $n-1$ over $\K$;
		\item For every $i \leqslant n$ we have $\betti{G}{i}{\K} = 0$.
	\end{enumerate}
\end{thmx}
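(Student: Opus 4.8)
The plan is to derive both implications from the $L^2$-homological fibring theorem of the first author in \cite{Fisher2024}, combined with classical structural results on Poincar\'e-duality groups; throughout, abbreviate ``orientable Poincar\'e-duality group of dimension $m$ over $\K$'' to ``PD$_m$ over $\K$''. The implication (1)$\Rightarrow$(2) is essentially immediate: a PD group over $\K$ is of type $\typeFP(\K)$, so the kernel $K=\ker\phi$ supplied by (1) is in particular of type $\typeFP_n(\K)$. Hence $G$ admits a virtual epimorphism onto $\Z$ with kernel of type $\typeFP_n(\K)$, and since $G$ is RFRS and (being PD$_n$ over $\K$) of type $\typeFP_n(\K)$, the theorem of \cite{Fisher2024} applies and gives $\betti{G}{i}{\K}=0$ for every $i\leqslant n$.

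For (2)$\Rightarrow$(1) we run this backwards. Feeding the vanishing in (2) into \cite{Fisher2024} produces a finite-index subgroup $G_0\leqslant G$ and an epimorphism $\phi\colon G_0\to\Z$ whose kernel $K$ is of type $\typeFP_n(\K)$. The subgroup $G_0$ is again PD$_n$ over $\K$, orientability included, by Shapiro's lemma for the finite-index inclusion together with the fact that finite-index subgroups inherit type $\typeFP(\K)$. Since $G_0/K\cong\Z$, the subgroup $K$ has infinite index in $G_0$, so $\operatorname{cd}_\K K\leqslant n-1$ by Strebel's theorem; as $K$ is moreover of type $\typeFP_{n-1}(\K)$, truncating at the top a resolution of the trivial module over $\K K$ whose terms in degrees $0,\dots,n-1$ are finitely generated projective shows that $K$ is of type $\typeFP(\K)$. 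Finally, the Poincar\'e-duality criterion for group extensions (Bieri; Johnson--Wall) applied to the extension $\1\to K\to G_0\to\Z\to\1$, in which $G_0$ is PD$_n$ over $\K$, $\Z$ is PD$_1$ over $\K$, and $K$ is of type $\typeFP(\K)$, shows that $K$ is a Poincar\'e-duality group of dimension $n-1$ over $\K$; its orientation character is the restriction of that of $G_0$, hence trivial, so $K$ is orientable. This is exactly (1).

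The step I expect to be the main obstacle is the passage from the finiteness property that \cite{Fisher2024} delivers to \emph{genuine} Poincar\'e duality of the fibre $K$: one must show that the cohomology-at-infinity groups $\homol^{i}(K;\K K)$ vanish for $i<n-1$ and that $\homol^{n-1}(K;\K K)\cong\K$, rather than merely bounding $\operatorname{cd}_\K K$. The classical extension argument achieves this over $\Z$, but one has to check that it---and the ancillary facts it relies on, namely Strebel's dimension bound for infinite-index subgroups of duality groups and the behaviour of orientation characters in group extensions---remains valid over an arbitrary coefficient field $\K$.
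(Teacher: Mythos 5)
Your proof is correct, but it takes a genuinely different route from the paper's for the interesting direction (2)\(\Rightarrow\)(1). You feed the hypothesis directly into the Virtual Algebraic Fibring Theorem of \cite{Fisher2024} to obtain a virtual fibre $K$ of type $\typeFP_n(\K)$, then combine Strebel's dimension bound for infinite-index subgroups of $\PD^n_\K$-groups with the classical extension criterion of Bieri \cite{Bieri1981}*{Theorem 9.11} (see also \cite{HillmanKochloukova2007}*{Corollary 6.1}) to conclude that $K$ is a $\PD^{n-1}_\K$-group. The paper instead proves a Novikov-ring analogue of the Bieri/Hillman--Kochloukova criterion (\cref{HK}), which derives the cohomology $\homol^i(K;\K K)$ from $\homol^{i+1}(G_0,\mathcal H_{G_0};\K G_0)$ via \cref{les} after establishing the vanishing of all Novikov (co)homology in \cref{pd half fibring}. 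Two structural differences are worth noting. First, the paper's \cref{HK} needs only the kernel to be of type $\typeFP_{\lfloor n/2\rfloor}(\K)$, and the authors route through \cref{l2 all chars} rather than \cref{fibring thm}; this half-dimensional hypothesis is unnecessary for \cref{main pd single field}, where your direct appeal to $\typeFP_n(\K)$ is simpler, but it matters for the paper's organisation (for instance, in dimension four only $\betti G 1 \Q = \betti G 2 \Q = 0$ is available as a hypothesis). Second, the paper's Novikov-ring argument avoids spectral sequences and passes uniformly to group pairs, whereas the Strebel-plus-extension argument would need separate work to handle the peripheral subgroups in \cref{thm:PD_pair_fibring}. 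You correctly flag the one genuine checking obligation of your route: Strebel's theorem, Bieri's extension criterion, and the multiplicativity of orientation characters in extensions are usually stated for $R = \Z$, and must be verified over an arbitrary field $\K$. That verification does go through --- Strebel's proof and the Lyndon--Hochschild--Serre argument are coefficient-agnostic --- but the paper sidesteps the issue by giving a self-contained Novikov-ring proof of \cref{HK} over any commutative $R$.
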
 

In fact, we can also deal with all fields simultaneously, but this requires a stronger assumption on $G$. 

\begin{thmx}\label{main pd}
	Let $G$ be a RFRS orientable  Poincar\'e-duality group of dimension $n>0$. The following are equivalent:
	\begin{enumerate}
		\item There exists a finite-index subgroup $G_0$ in $G$ and an epimorphism $\phi \colon G_0 \to \Z$ such that $\ker \phi$ is an orientable  Poincar\'e-duality group of dimension $n-1$ over all fields;
		\item For all $i \leqslant n$ and all prime fields $\K$, we have $\betti{G}{i}{\K} = 0$.
	\end{enumerate}
\end{thmx}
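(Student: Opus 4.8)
\emph{The plan} is to bootstrap from \Cref{main pd single field}; what the present statement adds is a virtual fibration whose fibre is a Poincar\'e-duality group over \emph{all} fields simultaneously. The implication (1)$\,\Rightarrow\,$(2) is then immediate from \Cref{main pd single field}: if $(G_0,\phi)$ witnesses (1), then for every prime field $\K$ the kernel $\ker\phi$ is in particular an orientable Poincar\'e-duality group of dimension $n-1$ over $\K$, so $(G_0,\phi)$ witnesses condition~(1) of \Cref{main pd single field} over $\K$, and hence $\betti{G}{i}{\K}=0$ for all $i\leqslant n$; letting $\K$ range over the prime fields gives (2).

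For (2)$\,\Rightarrow\,$(1) I would proceed as follows. Since $L^2$-Betti numbers over a field are multiplicative under finite-index inclusions, hypothesis (2) is unchanged upon passing to a finite-index subgroup, and it suffices to build the fibration there; so fix once and for all, \emph{independently of the field}, a finite-index subgroup $G_0\leqslant G$ far enough along a RFRS tower that the proof of \Cref{main pd single field} applies to $G_0$ in the refined ``polytope'' form recalled below. I would then reduce the target to a finiteness property of the fibre: if an epimorphism $\phi\colon G_0\to\Z$ has $\ker\phi$ of type $\typeFP$ over every prime field, then, using the standard descent of the property $\typeFP_n$ from $\{\Q,\F_2,\F_3,\dots\}$ to $\Z$ together with $\operatorname{cd}_\Z(\ker\phi)\leqslant\operatorname{cd}_\Z(G_0)=n$, the group $\ker\phi$ is of type $\typeFP$ over $\Z$; and then Bieri's theorem on extensions of duality groups, applied to $\1\to\ker\phi\to G_0\to\Z\to\1$ with $G_0$ an orientable $\mathrm{PD}_n$ group over $\Z$ and $\Z$ an orientable $\mathrm{PD}_1$ group, shows $\ker\phi$ to be an orientable $\mathrm{PD}_{n-1}$ group over $\Z$, hence over every field. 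So the whole problem reduces to: \emph{produce a single epimorphism $\phi\colon G_0\to\Z$ whose kernel is of type $\typeFP$ over every prime field.}

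For one prime field $\K$ at a time, \Cref{main pd single field} and (2) already provide such a $\phi$, and its proof in fact describes all of them: after passing to $G_0$, the integral characters with $\typeFP(\K)$ kernel are the integral points of $H^1(G_0;\R)\smallsetminus W_\K$, where $W_\K$ is the union of the walls of the normal fan of a rational agrarian polytope $P_\K\subseteq H_1(G_0;\R)$ attached to the torsion of the $\Z G_0$-chain complex base-changed to $\K$, and (2) guarantees $P_\K$ is non-degenerate. The additional ingredient for the present statement is to compare $P_{\F_p}$ with $P_\Q$: both are extracted from the same $\Z G_0$-chain complex, and reducing its finitely many, finitely supported matrix entries modulo $p$ affects the resulting polytope only when $p$ divides one of the finitely many extremal coefficients involved, so that $P_{\F_p}=P_\Q$, and therefore $W_{\F_p}=W_\Q$, for all but finitely many primes $p$. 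Consequently $\bigcup_{\K\text{ prime}}W_\K$ is a \emph{finite} union of proper rational subspaces of $H^1(G_0;\R)$, so there is a primitive integral class $\phi$ avoiding all of them; for such a $\phi$ the kernel is of type $\typeFP$ over every prime field, which by the previous paragraph finishes (1).

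The hard part is the structural input used above: that over a field $\K$ the type-$\typeFP$ fibred locus of $G_0$ is cut out by the normal fan of a rational agrarian polytope $P_\K$, and --- the genuinely new point beyond \Cref{main pd single field} --- that $P_{\F_p}$ agrees, for all but finitely many $p$, with its characteristic-zero counterpart $P_\Q$, so that only finitely many primes misbehave. Making this precise means working with the torsion of the $\Z G_0$-chain complex over the relevant agrarian division rings (via Dieudonn\'e determinants), tracking its base change to $\Q G_0$ and to the fields $\F_p G_0$, and matching it with the Novikov-homology criterion for type $\typeFP(\K)$ that powers \Cref{main pd single field}; in particular one must check that reducing modulo a ``good'' prime neither shrinks nor enlarges the associated fibred cone.
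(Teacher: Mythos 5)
Your reduction and your identification of the crux are correct, but the proposal has a real gap precisely where you flag one, and the paper takes a different route around it.

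On the correct parts: (1)$\Rightarrow$(2) via \cref{main pd single field} applied to each prime field matches the paper. Your reduction of (2)$\Rightarrow$(1) to ``find one $\phi$ whose kernel is $\typeFP$ over every prime field'' is also sound, and the closing step --- descent of $\typeFP$ from $\{\Q,\F_2,\F_3,\dots\}$ to $\Z$ using the bound $\operatorname{cd}(\ker\phi)\leqslant n$, then Bieri's extension theorem to get $\PD^{n-1}$ over $\Z$, hence over all fields --- is a legitimate alternative to what the paper does (the paper instead applies its Hillman--Kochloukova-type \cref{HK} over each prime field $\K$ separately and then uses that $\PD^{n-1}_\K$ over a prime field implies $\PD^{n-1}$ over all fields of that characteristic).

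The gap is the claim that, after fixing a single finite-index $G_0$, the agrarian polytopes satisfy $P_{\F_p}=P_\Q$ (or at least $W_{\F_p}\subseteq W_\Q$) for all but finitely many $p$, with no further passage to finite index needed. You explicitly call this ``the genuinely new point'' and ``the hard part,'' and you don't prove it. The heuristic you give --- that reducing the finitely many matrix entries mod $p$ only changes the polytope when $p$ divides an extremal coefficient --- does not work as stated, because $P_\K$ is not read off from the $\Z G_0$-chain complex directly; it is read off from Dieudonn\'e determinants computed inside $\D_{\K G_0}$, and inversion in these division rings produces elements whose Novikov supports can behave quite differently in different characteristics. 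Moreover, fixing $G_0$ ``once and for all'' is in tension with the mechanism of the virtual fibring theorem: how deep one needs to go for the fibred cone over $\F_p$ to capture a given rational direction can a priori depend on $p$.

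The paper sidesteps both issues. Its key new lemma is \cref{prop:large_primes}: for a \emph{fixed} character $\phi$, if the Novikov homology over $\Q$ vanishes up to degree $n$, then it vanishes over $\Z_{\mathcal P}$ for some finite set of primes $\mathcal P$, and hence over $\F_p$ for all $p\notin\mathcal P$. The proof is by truncating the $\Q$-chain contractions so that the error term has positive support, inverting $\id-A$ by a geometric series, and observing that only finitely many denominators appear. This is the rigorous, pointwise substitute for your polytope-identity claim. It is weaker --- it says nothing about the polytope $P_{\F_p}$ as a whole, and the finite bad set $\mathcal P$ depends on $\phi$ --- but it is exactly enough. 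The finitely many remaining bad primes are then handled not by a polytope comparison on a fixed $G_0$, but by the iterative argument in \cref{l2 all chars}: for each bad characteristic one applies \cref{virt alg fibring}/\cref{prop:tuple_sigma} over that prime field to find a \emph{deeper} finite-index subgroup $G_1\leqslant G_0$ and a new integral character $\psi$ on $G_1$ that fibres there, chosen inside an open set where the previously-won vanishing over good characteristics persists (openness of $\typeFP$-fibring). After finitely many such refinements one obtains a single $(G_k,\psi)$ good for all prime fields simultaneously. Your proposal would be complete if you either proved the polytope identity $P_{\F_p}=P_\Q$ for cofinite $p$ (a stronger statement than the paper uses and not obviously within reach), or replaced your ``fix $G_0$ once and for all'' step with this kind of iterated descent down the RFRS tower.
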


Using coincidences of small dimensions, we obtain a sharper result in dimension four.
	
\begin{corx}
	\label{main dim 4}
	Let $G$ be a RFRS Poincar\'e-duality group of dimension four. The following are equivalent:
	\begin{enumerate}
		\item There exists a finite-index subgroup $G_0 \leqslant G$ and an epimorphism $\phi \colon G_0 \rightarrow \Z$ such that $\ker \phi$ is an orientable Poincar\'e-duality group of dimension three;
		\item We have $\betti{G}{1}{\Q}=\betti{G}{2}{\Q} = 0$.
	\end{enumerate}
\end{corx}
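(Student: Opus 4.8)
The plan is to deduce the corollary from \cref{main pd single field} applied with $\K = \Q$ (the single-field statement suffices, and \cref{main pd} is not needed), together with the multiplicativity and Poincar\'e duality of $L^2$-Betti numbers over $\Q$, and one genuinely three-dimensional input: that a torsion-free group which is an orientable Poincar\'e-duality group over $\Q$ of dimension three is already an orientable Poincar\'e-duality group over $\Z$.

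First I would reduce to the case that $G$ is orientable. Let $G^+ \leqslant G$ be the kernel of the orientation character $G \to \Z/2\Z$; it has finite index, hence is again a $\PD_4$-group, now orientable, and it is RFRS as a subgroup of a RFRS group. Multiplicativity of $L^2$-Betti numbers gives $\betti{G}{i}{\Q} = 0$ if and only if $\betti{G^+}{i}{\Q} = 0$, and condition (1) holds for $G$ if and only if it holds for $G^+$: a witness $(G_0,\phi)$ for $G^+$ is also a witness for $G$, while a witness $(G_0,\phi)$ for $G$ restricts to the witness $(G_0 \cap G^+, \phi|_{G_0 \cap G^+})$ for $G^+$, whose kernel is a finite-index subgroup of $\ker\phi$ and hence still an orientable $\PD_3$-group. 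So we may assume $G$ is orientable.

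As $G$ is an orientable $\PD_4$-group over $\Z$ it is also one over $\Q$, so \cref{main pd single field} applies with $\K = \Q$: the vanishing of $\betti{G}{i}{\Q}$ for all $i \leqslant 4$ is equivalent to the existence of a finite-index $G_0 \leqslant G$ and an epimorphism $\phi \colon G_0 \to \Z$ whose kernel is an orientable $\PD_3$-group \emph{over $\Q$}. Now $\betti{G}{0}{\Q} = 0$ since $G$ is infinite, and Poincar\'e duality for the $L^2$-homology of $G$ over $\Q$ gives $\betti{G}{i}{\Q} = \betti{G}{4-i}{\Q}$; hence the vanishing for all $i \leqslant 4$ reduces to $\betti{G}{1}{\Q} = \betti{G}{2}{\Q} = 0$, which is condition (2) of the corollary. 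So condition (2) is equivalent to the ``over $\Q$'' form of condition (1).

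It remains to pass between the ``over $\Q$'' and ``over $\Z$'' forms of condition (1). One implication is formal: an orientable $\PD_3$-group over $\Z$ is of type $\typeFP$, and as $\Q$ is flat over $\Z$ it is an orientable $\PD_3$-group over $\Q$ as well. For the converse, if $\ker\phi$ is an orientable $\PD_3$-group over $\Q$ then, as a subgroup of the torsion-free group $G$, it is torsion-free, and I would invoke the dimension-three coincidence to conclude it is an orientable $\PD_3$-group over $\Z$; since $G_0 \leqslant G$ is finite-index, condition (1) follows. This upgrade is the main obstacle, and it genuinely lies outside the $L^2$-machinery: that machinery only produces a kernel of type $\typeFP(\Q)$, type $\typeFP(\Q)$ does not imply type $\typeFP(\Z)$ in general, and since condition (2) constrains only the rational Betti numbers there is no way in through the positive-characteristic statement of \cref{main pd}. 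One instead appeals to the structure theory of three-dimensional Poincar\'e-duality groups. (Once $\ker\phi$ is known to be of type $\typeFP$ over $\Z$, one can alternatively finish by applying Bieri--Eckmann duality to $\mathbbm{1} \to \ker\phi \to G_0 \to \Z \to \mathbbm{1}$, with $G_0$ a $\PD_4$-group and $\Z$ a $\PD_1$-group over $\Z$, which yields that $\ker\phi$ is a $\PD_3$-group, orientable since $G_0$ is.)
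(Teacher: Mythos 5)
Your $(1)\Rightarrow(2)$ direction is fine, and your reduction to the orientable case and your use of Poincar\'e duality for the $L^2$-Betti numbers ($\betti{G}{i}{\Q} = \betti{G}{4-i}{\Q}$) to pass from condition (2) to full vanishing are both correct and mirror what the paper does. The problem is in $(2)\Rightarrow(1)$, where you apply \cref{main pd single field} over $\Q$ to produce a kernel that is an orientable $\PD^3_\Q$-group and then appeal to a ``dimension-three coincidence'' asserting that a torsion-free orientable $\PD^3_\Q$-group is automatically a $\PD^3_\Z$-group. No such theorem is known. As you yourself observe, the obstruction is that type $\typeFP(\Q)$ does not imply type $\typeFP(\Z)$; the structure theory of $\PD^3$-groups (Bowditch's theorem and its relatives) operates over $\Z$ and does not close this gap, and Wall's conjecture in dimension three is open. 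So the step you flag as ``the main obstacle'' is in fact a genuine hole, not a deferral to a citable fact.

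The paper's route is genuinely different and never produces a $\PD^3_\Q$-kernel to upgrade. Instead, it applies \cref{l2 all chars} with $\mathcal K = \{\Q\}$ and $n=2$ to get $\ker\phi$ of type $\typeFP_2(\Q)$ only, and then exploits the coincidence that $\typeFP_1(\Q) = \typeFP_1(\Z)$ (finite generation is field-independent). Via \cref{Sikorav} this gives vanishing of the \emph{integral} Novikov homology $\homol_i(G_0;\nov \Z {G_0}{\pm\phi})$ in degrees $i \leqslant 1$, hence integral partial chain contractions $s_0, s_1$. The crucial degree is $2 = \lfloor 4/2 \rfloor$: using Poincar\'e duality to switch to Novikov cohomology, a degree-$2$ cocycle $z$ with integral values is killed by composing with the integral $s_1$ (the rational $s_2$ is only used implicitly to justify that the contraction identity holds; the cocycle condition annihilates it). This shows $\homol_2(G_0;\nov \Z {G_0}{\pm\phi}) = 0$, so $\ker\phi$ is of type $\typeFP_2(\Z)$, and then \cref{HK} over $R = \Z$ with $n=4$ delivers that $\ker\phi$ is an orientable $\PD^3_\Z$-group directly. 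Your alternative finishing move via Bieri--Eckmann duality is essentially \cref{HK}, but to invoke it you need $\ker\phi$ of type $\typeFP_2(\Z)$, which is exactly what the Novikov cohomology argument provides and what your proof lacks.
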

The Betti numbers $\betti{G}{i}{\Q}$ appearing above are the usual $L^2$-Betti numbers $\bettiQ{G}{i}$.

	\smallskip
	
	In the process of proving the results above, we establish a relationship between cohomology at infinity of any group $G$ and algebraic fibres sitting within.
	
	\begin{thmx}
		\label{main at infinity}
Let $R$ be a ring. Let $G$ be a group with an epimorphism $\phi \colon G \to \Z$. If  $K = \ker \phi$ is of type $\typeFP_n(R)$ then 
\[
\homol^{i+1}(G;RG) \cong \homol^i(K; RK)
\]
as $RK$-modules for all $i<n$  and $\homol^n(K;RK)$ is isomorphic as an $RK$-module to a submodule of  $\homol^{n+1}(G;RG)$.
	\end{thmx}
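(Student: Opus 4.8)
The plan is to exploit the short exact sequence $\mathbbm{1} \to K \to G \to \Z \to \mathbbm{1}$ together with the (topological or algebraic) Mayer--Vietoris / mapping-torus description of $G$ relative to $K$. Concretely, $G$ acts on a line $\mathbb{R}$ (via $\phi$ and translation), and one has a cellular-type decomposition of $EG$ as an infinite mapping telescope: a $\Z$-indexed family of copies of $EK$ glued along copies of $EK$. Since $K$ is of type $\typeFP_n(R)$, we may take a partial free resolution of $R$ over $RK$ that is finitely generated in degrees $\leqslant n$, and induce it up to obtain a partial resolution over $RG$ together with a short exact sequence of chain complexes encoding the HNN-style decomposition
\[
0 \to RG \otimes_{RK} C_\bullet \xrightarrow{\ t - 1\ } RG \otimes_{RK} C_\bullet \to C_\bullet^G \to 0,
\]
where $t$ generates $\Z$ and $C_\bullet$ is the chosen $RK$-resolution. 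The first steps are therefore: (i) fix such a partial resolution $C_\bullet \to R$ over $RK$, finitely generated projective in degrees $\leqslant n$; (ii) build from it the above short exact sequence of $RG$-complexes computing (a truncation of) $\homol_*(G;-)$ and $\homol_*(K;-)$ after tensoring.

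Next I would apply the contravariant functor $\Hom_{RG}(-, RG)$. Because $RG$ is free, hence flat, as an $RK$-module, and because $RG \otimes_{RK} C_\bullet$ is finitely generated projective over $RG$ in degrees $\leqslant n$, there is a natural isomorphism $\Hom_{RG}(RG \otimes_{RK} C_i, RG) \cong \Hom_{RK}(C_i, RG) \cong \Hom_{RK}(C_i, RK) \otimes_{RK} RG$ in those degrees, and the cohomology of $\Hom_{RK}(C_\bullet, RK)$ computes $\homol^i(K;RK)$ for $i < n$ (and a submodule of it in degree $n$, where $C_\bullet$ may fail to be finitely generated). Dualizing the short exact sequence of complexes — which remains exact since each term is projective over $RG$ — and taking the long exact sequence in cohomology then yields, in the relevant range, the connecting maps
\[
\cdots \to \homol^i(G;RG) \to \homol^i(K;RK)\otimes_{RK} RG \xrightarrow{\ t-1\ } \homol^i(K;RK)\otimes_{RK} RG \to \homol^{i+1}(G;RG) \to \cdots .
\]
The key algebraic observation is that the map induced by $t-1$ on $\homol^i(K;RK)\otimes_{RK} RG$ is, after identifying this group appropriately, the map $x \otimes g \mapsto x \otimes g - (t\cdot x)\otimes (tg)$, whose cokernel and kernel over $RG$ are naturally $\homol^i(K;RK)$ as an $RK$-module (this is the cohomological analogue of the fact that coinvariants/invariants of an induced module recover the original module). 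Feeding this into the long exact sequence gives, for $i<n$, a four-term exact sequence forcing $\homol^{i+1}(G;RG) \cong \homol^i(K;RK)$ as $RK$-modules, and in degree $n$ exhibits $\homol^n(K;RK)$ (or the relevant submodule arising from the partial resolution) as a submodule of $\homol^{n+1}(G;RG)$.

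The main obstacle I anticipate is bookkeeping the module structures carefully: one must verify that all the isomorphisms above are $RK$-linear (not merely $R$-linear), which requires tracking the residual $\Z$-action and checking that the identification of the kernel/cokernel of $t-1$ with $\homol^i(K;RK)$ is equivariant for the $K$-action coming from the conjugation action of $G$ on $K$. A secondary subtlety is the degree-$n$ endpoint: since $K$ is only $\typeFP_n(R)$, the complex $C_\bullet$ need not be finitely generated in degree $n$, so $\Hom_{RK}(C_n, RK)\otimes_{RK}RG \to \Hom_{RK}(C_n,RG)$ need not be an isomorphism; one then only gets that $\homol^n(K;RK)$ injects into the cohomology of the induced complex, and hence into a subquotient of $\homol^{n+1}(G;RG)$, which with a short diagram chase refines to the claimed submodule statement. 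Everything else — exactness of the mapping-torus sequence, flatness of $RG$ over $RK$, the shift by one in degree from the $[t-1]$ resolution of $\Z$ over $R\Z$ — is routine.
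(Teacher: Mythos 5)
Your argument is correct in its outline, but it takes the mapping-torus / Wang-sequence route, which is precisely the Lyndon--Hochschild--Serre spectral sequence argument that the paper only sketches in a remark \emph{after} this theorem as an alternative. The paper's actual proof is different and shorter: it combines \cref{Sikorav hom and cohom} (vanishing of Novikov cohomology $\homol^i(G;\nov{R}{G}{\pm\phi})$ for $i \leqslant n$, which is equivalent to $K$ being $\typeFP_n(R)$) with the long exact sequence of \cref{les}, which is built from the short exact sequence of $RG$-bimodules
\[
0 \to RG \to \nov{R}{G}{\phi} \oplus \nov{R}{G}{-\phi} \to \prod_{i \in \Z} RK\, t^i \to 0
\]
and Shapiro's lemma. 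The vanishing of the middle term in the LES immediately turns the connecting maps into isomorphisms $\homol^{i+1}(G;RG) \cong \homol^i(K;RK)$ for $i < n$ and into an injection at $i = n$, with no need to identify a $t-1$ action or to argue that $\homol^i(K;-)$ commutes with direct sums. Both routes work; the Novikov-ring route has fewer moving parts and feeds directly into the machinery (Sikorav-type vanishing) used throughout the rest of the paper, while your Wang-sequence route is self-contained and does not require introducing Novikov rings at all.

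Two small inaccuracies in your write-up. First, you worry that ``$C_\bullet$ may fail to be finitely generated in degree $n$,'' but $\typeFP_n(R)$ by definition provides a resolution with $P_i$ finitely generated for all $i \leqslant n$, so $C_n$ \emph{is} finitely generated; the reason the conclusion degrades to an inclusion at the top degree is that $C_{n+1}$ need not be finitely generated, so you cannot control $\ker(t-1)$ acting on $\homol^{n+1}(K;RG)$ (equivalently $E_2^{0,n+1}$ in the spectral sequence), and the degree-$n$ part of the sequence therefore only gives an injection of $\homol^n(K;RK)$, which you do get in full, not merely a submodule of it. Second, the phrase ``the cohomology of $\Hom_{RK}(C_\bullet,RK)$ computes $\homol^i(K;RK)$ for $i<n$'' is off: since $C_\bullet$ is a projective resolution, that complex computes $\homol^i(K;RK)$ in \emph{all} degrees; the finiteness constraint only matters when identifying $\Hom_{RK}(C_i,RG)$ with $\bigoplus_{j\in\Z}\Hom_{RK}(C_i,RK)t^j$, which is valid precisely for $i \leqslant n$.
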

	
Applying the theorem with $i=1$ yields the following.

	\begin{corx}
		\label{one-ended fibr}
	Let $M$ be a closed aspherical manifold of dimension at least three. If $\phi \colon \pi_1(M) \to \Z$ is an epimorphism with finitely generated kernel, then the kernel has only one end. 
	\end{corx}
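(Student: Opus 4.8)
The plan is to read the statement off \cref{main at infinity}, applied with $R = \Z$ and $n = 1$, together with the classical homological characterisation of the ends of a finitely generated group. Throughout write $G = \pi_1(M)$, $K = \ker \phi$, and $d = \dim M \geqslant 3$.

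First I would assemble the two ingredients. Since $M$ is a closed aspherical manifold, $G$ is a Poincar\'e-duality group of dimension $d$; in particular it has finite cohomological dimension, hence is torsion-free, and $\homol^i(G; \Z G) = 0$ for every $i \neq d$ --- this holds regardless of whether $M$ is orientable, the orientation module being $\Z$ as an abelian group. On the other side, a finitely generated group is exactly a group of type $\typeFP_1(\Z)$, so the hypotheses of \cref{main at infinity} hold with $n = 1$. In the boundary degree $i = n = 1$ the theorem yields an injection of $\Z K$-modules
\[
\homol^1(K; \Z K) \hookrightarrow \homol^2(G; \Z G).
\]
As $d \geqslant 3$ we have $2 \neq d$, so $\homol^2(G; \Z G) = 0$, and hence $\homol^1(K; \Z K) = 0$.

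Next I would dispose of the degenerate cases. If $K$ were finite then, $G$ being torsion-free, $K$ would be trivial, forcing $G \cong \Z$ and $\mathrm{cd}(G) = 1$, contradicting $d \geqslant 3$; so $K$ is infinite. It then remains to invoke the fact that for a finitely generated infinite group $K$ the number of ends equals $1$ precisely when $\homol^1(K; \Z K) = 0$ (more precisely it is $1 + \operatorname{rank}_\Z \homol^1(K; \Z K)$ when this group is finitely generated, and is infinite otherwise); this is classical --- see for instance Geoghegan's book on topological methods in group theory, or Brown's \emph{Cohomology of groups}. Therefore $K$ has exactly one end.

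I do not expect a genuine obstacle in this argument; the only point worth flagging is that \cref{main at infinity} supplies merely an \emph{injection} $\homol^n(K; RK) \hookrightarrow \homol^{n+1}(G; RG)$ in the top degree $i = n$, not an isomorphism --- but since the target vanishes here, injectivity is all that is needed. (The isomorphism $\homol^1(G; \Z G) \cong \homol^0(K; \Z K)$ coming from the range $i < n$ is an additional output of the theorem, but it plays no role in the present statement.)
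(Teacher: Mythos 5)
Your proof is correct and follows essentially the same route as the paper: both apply \cref{main at infinity} in degree $n=1$, deduce vanishing of $\homol^1(K;\cdot K)$ from vanishing of $\homol^2(G;\cdot G)$ (which follows from Poincar\'e duality because $\dim M \geqslant 3$), invoke the homological characterisation of one-endedness, and dispose of the zero-end case by observing that a finite kernel in a torsion-free group would force $G \cong \Z$, contradicting the dimension bound. The one genuine difference is the choice of coefficient ring. The paper factors the argument through \cref{cor:H2zero_oneEndFibre}, which is stated for an arbitrary field $\K$; since the relevant end-theoretic result of Specker is applied over $\F_2$, they must invoke Swan's Lemma~3.5 to pass from the given field to $\F_2$. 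You work directly over $\Z$, which sidesteps Swan entirely and appeals instead to the $\Z$-coefficient form of the Hopf--Specker--Freudenthal theorem ($\homol^1(K;\Z K)=0$ iff $K$ has at most one end, for finitely generated $K$). This is a slight streamlining for the specific corollary; what you lose is the more general intermediate statement \cref{cor:H2zero_oneEndFibre}, which applies to any group $G$ with $\homol^2(G;\K G)=0$ for some field $\K$ rather than requiring the $\Z G$-cohomology to vanish. For the corollary at hand, both hypotheses are automatic, so both arguments are equally valid.
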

	
\subsection*{Outline of the arguments}

The key technical tool is Novikov homology, that is group homology with coefficients in a Novikov ring. If we have a homomorphism $\phi \colon G \to \Z$ and a ring $R$, the Novikov ring $\nov R G \phi$ is the ring of twisted Laurent power series in $t$ with coefficients in $\ker \phi$, where $t \in G$ is such that $\phi(t)$ is the positive generator of $\Z$, and the twisting comes from the conjugation action of $t$ on $\ker \phi$. Novikov homology controls the homological finiteness properties of $\ker \phi$.  

The first observation we make is that vanishing of Novikov homology also gives vanishing of Novikov cohomology (\cref{flip}). This has already been used in a more restricted form in \cite{Bridsonetal2025}. Then we formulate a long exact sequence relating the Novikov cohomology of $G$, and the cohomology at infinity of $G$ and $\ker \phi$ (\cref{les}). This is essentially enough to prove \cref{main at infinity}. A similar long exact sequence appeared already in \cite{Fisher2024_cdKernels}.

The final technical novelty is \cref{prop:large_primes}, in which we show that having vanishing Novikov homology over $\Q$ gives analogous vanishing for finite prime fields $\F_p$, except possibly for finitely many primes.

We then turn our attention to a RFRS group $G$. We fix a collection of fields $\mathcal K$, and prove that vanishing of $\betti{G}{i}{\K}$ for all $i \leqslant n$ and all fields $\K \in \mathcal K$ is equivalent to virtual algebraic fibring with kernel of type $\typeFP_n(\K)$ for every $\K \in \mathcal K$ simultaneously (\cref{l2 all chars}). This holds when $\mathcal K$ satisfies a certain property -- the two key cases are that of a single field, or all prime fields. In the latter case, this relies on \cref{prop:large_primes}; in both cases we need the virtual algebraic fibring theorems of the first and third authors.

Finally, for RFRS Poincar\'e-duality groups, we combine \cref{l2 all chars}, the vanishing of Novikov cohomology, and \cref{main at infinity}, and obtain \cref{main pd} as a result.

The entire paper is written in the generality of group pairs $(G, \mathcal H)$. This makes the arguments perhaps a little less transparent, but is natural, since for both aspherical manifolds and Poincar\'e-duality groups one often wants to argue by using manifolds with boundary or Poincar\'e-duality pairs. 

\subsection*{Acknowledgements}
The first author is grateful to Andrei Jaikin-Zapirain for a useful correspondence and is supported by the National Science and Engineering Research Council of Canada (ref.~no.~567804-2022). 

The second author gratefully acknowledges support from the Royal Society through the Newton International Fellowship (award number: NIF\textbackslash R1\textbackslash 231857).

The third author is grateful to Sami Douba for raising the question of one-endedness of algebraic fibres.

This work has received funding from the European Research Council (ERC) under the European Union's Horizon 2020 research and innovation programme (Grant agreement No. 850930).

For the purpose of Open Access, the authors have applied a CC BY public copyright licence to any Author Accepted Manuscript (AAM) version arising from this submission.

\section{Preliminaries}

\subsection{Modules}
\label{sec modules}
Throughout, all rings will be associative, unital, and non-zero. The symbol $R$ will always denote such a ring.

 In \cref{sec PD,sec PD fibring} we will be more restrictive, and will only look at commutative rings. In fact, the true focus of the paper is the ring $\Z$ and various fields.

We will have a preference for left modules; in particular, resolutions will be by left modules. This means that the natural modules to consider as coefficients in cohomology will be left modules, but in homology one should use right modules. It is however customary to follow  \cite{Brown1982}, and to put left modules as coefficients in homology as well: when computing homology with coefficients in a left module $M$, we tensor a resolution on the left with $M^*$, the right module obtained from $M$ by twisting the group action of $G$ by the anti-automorphism $g \mapsto g^{-1}$. When $M$ is a bimodule, its left-module structure is used in the computation of homology, but its right-module structure remains -- after twisting, it turns into a left-module structure and descends to a left-module structure for the homology.

\subsection{Characters}

Let $G$ be a finitely generated group. The \emph{character cone} of $G$ is the set $S(G)$ of non-zero characters $\phi \colon G \rightarrow \R$; alternatively, 
\[
S(G) = \homol^1(G;\R) \smallsetminus \{0\}.
\]
Then $S(G)$ inherits a topology by viewing it as a subspace of 
\[\homol^1(G;\R) \cong \R^{\beta_1(G)}.
\]
The elements of $S(G)$ will be called \emph{characters}.

When $G_0$ is a finite-index subgroup of $G$ then $S(G)$ is naturally a subspace of $S(G_0)$. For a general subgroup $H \leqslant G$ we have the induced continuous map $\homol^1(G;\R) \to \homol^1(H;\R)$, which we may restrict to $S(G)$, but its image is not guaranteed to lie in $S(H)$.

\subsection{Novikov rings}
\label{section Novikov}

Given a group $G$ we denote the group ring of $G$ with coefficients in $R$ by $RG$. 
We will identify $RG$ with the abelian group of functions $G \to R$ of finite support, where the \emph{support} $\supp x$ of $x \colon G \to R$ is $x^{-1}(R \smallsetminus \{0\})$. Multiplication is then given by convolution.

Given a homomorphism $\phi \colon G \to \R$, we define the \emph{Novikov ring} of $G$ with respect to $\phi$, denoted $\nov R G \phi$, to be the abelian group of those functions $G \to R$ whose support intersected with $\phi^{-1}((-\infty, \kappa])$ is finite for every $\kappa \in \R$. Convolution endows this abelian group with a ring structure.

The Novikov ring $\nov R G \phi$ contains $R G$, and hence is naturally an $RG$-bimodule.

\smallskip
Given an element $x \colon G \to R$ of the Novikov ring $\nov R G \phi$ and a constant $\kappa \in \R$, we define the \emph{truncation} of $x$ at $\kappa$ to be the function $G \to R$ with
\[
g \mapsto \left\{ \begin{array}{cl} x(g) & \textrm{ if } \phi(g) \leqslant \kappa, \\ 0 & \textrm{ otherwise.} \end{array} \right.
\] 
It follows immediately from the definition of the Novikov ring  that such a truncation is actually a function of finite support, and hence an element of the group ring $R G$.

Given $x \in \nov R G \phi$, we say that it has \emph{positive support} if 
\[\phi(\supp x) \subseteq (0,\infty).\]
 For every $x$, the truncation $\overline x$ of $x$ at $0$ has the property that $x - \overline x$ has positive support.

\smallskip

There is an alternative way of viewing Novikov rings when $\im(\phi) \cong \Z$: take $t \in G$ such that $\phi(t)$ is the positive generator of $\im(\phi)$, with the ordering inherited from $\im(\phi)<\R$. As an $R (\ker \phi)$-module, the Novikov ring $\nov R G \phi$ is isomorphic to the ring of twisted Laurent series over $R (\ker \phi)$ with variable $t$, with twisting induced by conjugation.

Either way, we see that as an $R \ker \phi$-module, the Novikov ring  $\nov R G \phi$ embeds into the module $\prod_{i \in \Z} ( R \ker \phi) t^i$, the module of all functions $G \to R$ whose support intersected with $\phi^{-1}([-\kappa, \kappa])$ is finite for every $\kappa \in \R$. This latter module is easily seen to be an $RG$-bimodule.

If $H \leqslant G$ is a finite-index subgroup, it is easy to see that $\nov R G \phi$ coincides as an $RG$-module with the $RG$-module induced from the $RH$-module $
\nov R H {\phi|_{H}}$.

For general subgroups, we have the following.

\begin{lem}\label{lem:restrict_novikov}
	Let $G$ be a group, let $R$ be a ring, and let $H \leqslant G$ be a subgroup. If $\homol_n(H;\nov{R}{G}{\phi}) = 0$ for some $\phi \colon G \rightarrow \R$ and $n \geqslant 0$, then $\homol_n(H;\nov{R}{H}{\phi|_H}) = 0$.
\end{lem}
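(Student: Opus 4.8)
The plan is to realise $\nov{R}{H}{\phi|_H}$ as a retract of $\nov{R}{G}{\phi}$ in the category of left $RH$-modules, and then to apply the functor $\homol_n(H;-)$. Once such a retraction is available the conclusion is immediate: functoriality turns a split injection of $RH$-modules into a split injection of homology groups, so $\homol_n(H;\nov{R}{H}{\phi|_H})$ embeds into $\homol_n(H;\nov{R}{G}{\phi})=0$. (Here $\nov{R}{G}{\phi}$ is regarded, as in the paper's conventions, via its left $RH$-module structure obtained by restriction.)

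First I would identify $\nov{R}{H}{\phi|_H}$ with the sub-$RH$-module $A \leqslant \nov{R}{G}{\phi}$ consisting of those elements whose support is contained in $H$. Extending a function $H \to R$ by zero to a function $G \to R$ does not change its support, and for such a function the set $\supp x \cap \phi^{-1}((-\infty,\kappa])$ coincides with $\supp x \cap (\phi|_H)^{-1}((-\infty,\kappa])$ for every $\kappa$; hence extension by zero carries $\nov{R}{H}{\phi|_H}$ bijectively onto $A$, additively, and compatibly with the left $H$-actions, which on both sides are $(h\cdot x)(g)=x(h^{-1}g)$. Next I would define the retraction $r \colon \nov{R}{G}{\phi} \to A$ by letting $r(y)$ agree with $y$ on $H$ and vanish on $G \smallsetminus H$. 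Since $\supp r(y) \subseteq \supp y$, the Novikov condition is inherited, so $r$ is well defined; it is clearly additive and restricts to the identity on $A$. The one point needing a line of checking is $RH$-linearity: for $h,h' \in H$ one has $r(h\cdot y)(h') = (h\cdot y)(h') = y(h^{-1}h') = (h\cdot r(y))(h')$ precisely because $h^{-1}h'$ again lies in $H$, and both $r(h\cdot y)$ and $h\cdot r(y)$ vanish off $H$; thus $r$ is a morphism of left $RH$-modules and a retraction of the inclusion $A \hookrightarrow \nov{R}{G}{\phi}$.

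Finally, applying $\homol_n(H;-)$ to the inclusion $A \hookrightarrow \nov{R}{G}{\phi}$ and to $r$ shows, by functoriality, that $\homol_n(H;A)$ is a direct summand of $\homol_n(H;\nov{R}{G}{\phi})$, which vanishes by hypothesis; hence $\homol_n(H;\nov{R}{H}{\phi|_H}) \cong \homol_n(H;A) = 0$. I do not expect a genuine obstacle here: the real content of the lemma is that, in contrast with induction from a finite-index subgroup, one does not get an isomorphism of $RH$-modules but only a split injection, so the work is exactly the two well-definedness checks (extension by zero and restriction to $H$ both preserve the Novikov condition) together with the verification that restriction to $H$ is $RH$-linear, which uses only that $H$ is closed under multiplication. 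One could alternatively organise this through the right-coset decomposition $G = \bigsqcup_g Hg$, which realises $\nov{R}{G}{\phi}$ as an $RH$-submodule of $\prod_g \nov{R}{H}{\phi|_H}$ containing $\bigoplus_g \nov{R}{H}{\phi|_H}$, with $A$ the summand indexed by the trivial coset; but only this single summand is needed.
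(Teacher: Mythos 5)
Your proposal is correct and follows exactly the paper's argument: restriction of functions to $H$ gives an $RH$-linear retraction of the inclusion $\nov{R}{H}{\phi|_H}\hookrightarrow\nov{R}{G}{\phi}$, exhibiting the former as a direct summand of the latter, and the vanishing then follows by functoriality. The extra checks you spell out (extension by zero and restriction both preserve the Novikov condition; restriction is $RH$-linear because $H$ is closed under multiplication) are the routine verifications the paper leaves implicit.
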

\begin{proof}
	Since elements of $\nov{R}{G}{\phi}$ are functions $G \rightarrow R$, we can restrict them to $H$ to obtain an $RH$-module homomorphism $\nov{R}{G}{\phi} \rightarrow \nov{R}{H}{\phi|_H}$. The natural inclusion $\nov{R}{H}{\phi|_H} \hookrightarrow \nov{R}{G}{\phi}$ splits this homomorphism, which shows that $\nov{R}{H}{\phi|_H}$ is a direct summand of $\nov{R}{G}{\phi}$; the result then follows at once. \qedhere
\end{proof}

\subsection{Finiteness properties}

We will follow the standard (that is, Bieri's \cite{Bieri1981}) notation for finiteness properties. In particular, a group $G$ will be \emph{of type $\typeFP_n(R)$} if the trivial $RG$-module $R$ admits a resolution by left projective $RG$-modules
\[
\dots \rightarrow P_1 \rightarrow P_0 \rightarrow R \rightarrow 0
\]
with $P_i$ finitely generated for every $i \leqslant n$ (we allow $n=\infty$ here); the group will be \emph{of type $\typeFP(R)$} if there is a finite resolution of $R$ by finitely generated projective modules.

If the ring $R$ is not specified, then we are taking the respective property over $\Z$; this is reasonable, since being of type $\typeFP_n(\Z)$ implies being of type $\typeFP_n(R)$ for every ring $R$. More generally, if $S$ is an $R$-algebra, then being of type $\typeFP_n(R)$ implies being of type $\typeFP_n(S)$.

All of the finiteness properties listed here pass to subgroups of finite index.

\smallskip

We say that a group $G$ has \emph{cohomological dimension} $n$ over $R$ if the trivial $RG$-module $R$ admits a projective resolution of length $n$, but does not admit such a resolution of length $n-1$. We write $\mathrm{cd}_R(G) = n$.

We always have $\mathrm{cd}_R(G) \leqslant n$ if $\homol^i(G;M) = 0$ for every $RG$-module $M$ and all $i > n$ -- this follows from \cite{Brown1982}*{Sections III.7 and VIII.2}. In fact, an alternative definition of $\mathrm{cd}_R(G)$ is that it is the supremal integer $n$ for which there exists an $RG$-module $M$ such that $\homol^n(G;M) \neq 0$.

If $G$ is a group of type $\typeFP (R)$, we have $\mathrm{cd}_R(G) = n$ \iff $\homol^{i}(G;RG) = 0$  for all $i>n$ and $\homol^{n}(G;RG) \neq 0$. This is proved precisely like \cite{Brown1982}*{Proposition VIII.6.7} by changing coefficients from $\Z$ to $R$.

As before, $\mathrm{cd}$ stands for $\mathrm{cd}_\Z$, which is again sensible notation since $\mathrm{cd}_\Z(G) \geqslant \mathrm{cd}_R(G)$ for every group $G$ and every ring $R$.

\subsection{Chain contractions}

Throughout the article, we will make multiple uses of chain contractions, which we introduce here. Let $R$ be a ring and let 
\[
	\cdots \rightarrow C_1 \rightarrow C_0 \rightarrow 0
\]
be a chain complex of free $R$-modules with boundary maps 
\[
	\partial_i \colon C_i \rightarrow C_{i-1}.
\]
A \emph{chain contraction} of $C_\bullet$ is a collection of $R$-module homomorphisms $s_i \colon C_i \rightarrow C_{i+1}$ such that 
\[
	\id_{C_i} = \partial_i s_{i-1} + s_i \partial_{i+1}
\]
for all $i \geqslant 0$, with $s_{-1} = 0$. If there only exist maps $s_i$ satisfying the same condition for $0 \leqslant i \leqslant n$, then we call the collection $s_0, s_1, \dots, s_n$ a \emph{partial chain contraction of length} $n$ of $C_\bullet$. By a slight abuse of terminology, we will also refer to the individual maps $s_i$ as chain contractions. It is clear that if there exists a partial chain contraction of length $n$, then $\homol_i(C_\bullet) = 0$ for all $i \leqslant n$. An easy exercise in homological algebra shows that the converse is also true.

\begin{lem}
	Let $C_\bullet$ be a chain complex of free $R$-modules with no negative terms. Then $\homol_i(C_\bullet) = 0$ for all $i \leqslant n$ if and only if $C_\bullet$ admits a partial chain contraction of length $n$.
\end{lem}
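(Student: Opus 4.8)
The plan is to prove both implications directly by manipulating a projective resolution of the trivial module $R$. Suppose first that $C_\bullet$ admits a partial chain contraction $s_0, \dots, s_n$ of length $n$; then for each $i \leqslant n$ and every cycle $z \in \ker \partial_i$, the contraction identity $\id = \partial_i s_{i-1} + s_i \partial_{i+1}$ applied to $z$ gives $z = \partial_i(s_{i-1} z) + s_i(\partial_i z) = \partial_i(s_{i-1} z)$, so $z$ is a boundary; hence $\homol_i(C_\bullet) = 0$ for all $i \leqslant n$. This is the easy direction and I would state it in one line.

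For the converse, assume $\homol_i(C_\bullet) = 0$ for all $i \leqslant n$, and construct the maps $s_i$ by induction on $i$. For $i = 0$: we want $s_0 \colon C_0 \to C_1$ with $\id_{C_0} = \partial_0 s_{-1} + s_0 \partial_1 = s_0 \partial_1$ -- wait, this is not right since $\partial_1$ need not be surjective; rather the correct identity at level $i$ involves $\partial_{i+1}$ on the correct side. Let me restate: I want $s_i \colon C_i \to C_{i+1}$ such that, on the relevant range, the contraction identity holds. The standard argument: since $C_\bullet$ is a free (hence projective) resolution-like complex that is exact in degrees $\leqslant n$, augmenting by $\homol_0(C_\bullet)$ if necessary, one builds $s_i$ degree by degree. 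Concretely, suppose $s_{-1} = 0, s_0, \dots, s_{i-1}$ have been defined with $\id_{C_j} = \partial_j s_{j-1} + s_j \partial_{j+1}$ for $j < i$; then the map $\id_{C_i} - \partial_i s_{i-1} \colon C_i \to C_i$ satisfies $\partial_i \circ (\id_{C_i} - \partial_i s_{i-1}) = \partial_i - \partial_i\partial_i s_{i-1} = \partial_i - \partial_i(\id_{C_{i-1}} - s_{i-2}\partial_i) = \partial_i\partial_{i-1} \cdots$; chasing this shows the image of $\id_{C_i} - \partial_i s_{i-1}$ lands in $\ker \partial_i = \im \partial_{i+1}$ (using exactness at $C_i$, valid since $i \leqslant n$). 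Since $C_i$ is free hence projective and $\partial_{i+1} \colon C_{i+1} \to \im \partial_{i+1}$ is surjective, the map $C_i \to \im \partial_{i+1}$ lifts to $s_i \colon C_i \to C_{i+1}$ with $\partial_{i+1} s_i = \id_{C_i} - \partial_i s_{i-1}$, which is exactly the required identity at degree $i$.

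The main obstacle -- really the only subtle point -- is the computation showing $\im(\id_{C_i} - \partial_i s_{i-1}) \subseteq \ker \partial_i$; this needs the inductive hypothesis at degree $i-1$ and exactness at degree $i$, and in the base case $i=0$ one either works with the augmented complex or observes the condition is automatically satisfiable. I would write this verification carefully but briefly, noting that freeness is used only through projectivity to obtain the lift. I would also remark that the argument is the classical comparison-theorem / fundamental lemma of homological algebra, so an alternative is simply to cite \cite{Brown1982}; but since the statement is elementary, giving the two-paragraph inductive proof in place is cleanest.
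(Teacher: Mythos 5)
Your argument is the correct standard one (projectivity of each $C_i$ plus exactness up to degree $n$ lets you lift $\id_{C_i} - s_{i-1}\partial_i$ through $\partial_{i+1}$), which is indeed the ``easy exercise'' the paper alludes to without writing out; in that sense you match the paper's intent exactly. However, your write-up has several bookkeeping slips that you should fix before it could go into print. First, note that the paper's formula $\id_{C_i} = \partial_i s_{i-1} + s_i\partial_{i+1}$ only type-checks if one reads composition left-to-right (apply the left factor first), so that $\partial_i s_{i-1}$ is $z \mapsto s_{i-1}(\partial_i z)$; you instead mix this with the $\circ$-convention later, which is why your intermediate expressions stop type-checking. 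Second, in the easy direction the boundary you produce should be $\partial_{i+1}(s_i z)$ (equivalently $(s_i\partial_{i+1})(z)$ in the paper's notation), not $\partial_i(s_{i-1}z)$: for $z \in C_i$ the map $s_{i-1}$ has the wrong domain. Third, your mid-stream doubt about the base case is unfounded: $\partial_1$ \emph{is} surjective, precisely because $\homol_0(C_\bullet) = 0$ and $\partial_0 = 0$ (there are no negative terms), so no augmentation is needed. Finally, in the inductive verification that $\im(\id_{C_i} - s_{i-1}\partial_i) \subseteq \ker\partial_i$, after substituting the inductive identity the two copies of $\partial_i$ cancel and what remains is $s_{i-2}\partial_{i-1}\partial_i = 0$ by $\partial\partial = 0$; your displayed chain of equalities garbles the indices and trails off, so write that cancellation out explicitly. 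None of these affect the soundness of the strategy, but as written the computations do not go through literally.
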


We will often use this result in the following guise.

\begin{cor}
	Let $G$ be a group, let $R$ be a ring, and let $S$ be an $RG$-algebra. Let $C_\bullet \rightarrow R$ be a free resolution of the trivial $RG$-module $R$. Then $\homol_i(G;S) = 0$ for all $i \leqslant n$ if and only if the chain complex $S \otimes_{RG} C_\bullet$ of free left $S$-modules admits a partial chain contraction of length $n$.
\end{cor}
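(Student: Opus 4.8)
The plan is to deduce the statement immediately from the preceding lemma, applied to the chain complex $D_\bullet := S \otimes_{RG} C_\bullet$. First I would check that $D_\bullet$ satisfies the hypotheses of that lemma, now taken over the ground ring $S$ in place of $R$. Since $C_\bullet \to R$ is a \emph{free} resolution, each $C_i$ is a free $RG$-module, and base change of a free module along the structure map $RG \to S$ is again free; hence every $D_i = S \otimes_{RG} C_i$ is a free left $S$-module, and $D_\bullet$ inherits from $C_\bullet$ the property of having no negative terms. (Here $S \otimes_{RG} C_\bullet$ is formed and given its left $S$-module structure following the conventions for homology coefficients set out in the preliminaries, where the right $RG$-module structure used in the tensor product and the left $S$-module structure that survives it are discussed.)

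Next I would recall that $\homol_i(G;S) \cong \homol_i(D_\bullet)$ for every $i$, essentially by definition: a free resolution is in particular a projective one, and group homology with coefficients may be computed from any projective resolution of the trivial $RG$-module, independently of the choice. Feeding this identification into the preceding lemma gives that $\homol_i(G;S) = 0$ for all $i \leqslant n$ if and only if $D_\bullet$ admits a partial chain contraction of length $n$, which is exactly the assertion.

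I do not expect a genuine obstacle: the corollary is a routine translation of the lemma into the language of group homology. The only points deserving a moment's care are bookkeeping ones -- that the maps $s_i$ provided by the conclusion are automatically $S$-module homomorphisms (since the lemma is invoked with $S$ as ground ring), and that a free resolution is in particular a chain complex of free modules with no negative terms, so that the lemma applies verbatim.
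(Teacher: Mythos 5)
Your argument is correct and is exactly the reasoning the paper leaves implicit (the corollary is stated as an immediate reformulation of the preceding lemma, with no separate proof given). You apply the lemma over the ground ring $S$ to $D_\bullet = S \otimes_{RG} C_\bullet$, verifying freeness by base change and identifying $\homol_i(D_\bullet)$ with $\homol_i(G;S)$ via the standard independence of group homology from the choice of projective resolution; this is precisely what is intended.
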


As an immediate consequence we see that if $S'$ is another $RG$-algebra and there is a morphism $\sigma \colon S \to S'$ of $RG$-algebras, then vanishing of $\homol_i(G;S)$ for all $i \leqslant n$ implies the vanishing of $\homol_i(G;S')$ for all $i \leqslant n$. To see this, apply $\sigma$ to a partial chain contraction over $S$.

\subsection{Generalised Sikorav's theorem}

In his thesis \cite{Sikorav1987}*{Chapter IV}, Sikorav proved that if $\phi \colon G \to \Z$ is an epimorphism with finitely generated domain, then $\ker \phi$ is finitely generated \iff 
\[\homol_1(G; \nov \Z G \phi) = 0 = \homol_1(G; \nov \Z G {-\phi}).\]
 Throughout the article, we will abbreviate such vanishing by writing
\[
\homol_1(G; \nov \Z G {\pm \phi})=0,
\]
and similarly for Novikov rings over other coefficients.

Sikorav formulated his theorem in the language of Bieri--Neumann--Strebel (BNS) invariants \cite{Bierietal1987}, which has the advantage of giving meaning to the vanishing of Novikov homology for $\nov \Z G \phi$ only; we are not going to do that, since the introduction of the BNS-invariants is not necessary, and we will always  have the vanishing of Novikov homologies over  $\nov \Z G { \pm \phi}$.

A related result was proved by Ranicki~\cite{Ranicki1995}*{Theorem 1} -- here the statement connects the vanishing of Novikov homologies in all dimensions to higher finiteness properties of the kernel, but only applies in the very restricted situation where $G \cong \ker \phi \times \im(\phi)$.

Combining the initial insight of Sikorav with the work of Bieri, Renz, and Schweitzer \cites{BieriRenz1988,Bieri2007}, the first author proved the following result.

\begin{thm}[Generalised Sikorav's Theorem, \cite{Fisher2024}*{Theorems 5.3 and 6.5}]
	\label{Sikorav}
Let $G$ be a group of type $\typeFP_n(R)$ for some ring $R$, and let $\phi \colon G \to \Z$ be an epimorphism. The following are equivalent:
\begin{enumerate}
\item $\ker \phi$ is of type $\typeFP_n(R)$;
\item $\homol_i(G; \nov  R G {\pm \phi}) = 0$ for all $i \leqslant n$.
\end{enumerate}
\end{thm}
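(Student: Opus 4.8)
The plan is to route everything through Novikov rings and the Bieri--Neumann--Strebel--Renz geometric invariants. Write $K = \ker\phi$ and fix $t \in G$ with $\phi(t) = 1$; since $\Z$ is free, $\phi$ splits and $G = K \rtimes \langle t\rangle$. Letting $\alpha \in \Aut(RK)$ denote conjugation by $t$, the group ring $RG$ becomes the twisted Laurent ring $RK[t^{\pm 1};\alpha]$, and, as recorded above, $\nov R G \phi$ and $\nov R G {-\phi}$ are the twisted Laurent-series rings $RK((t))$ and $RK((t^{-1}))$. I would prove the two implications by rather different means.

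For $(1)\Rightarrow(2)$ I would use an algebraic mapping torus. Starting from a projective resolution $P_\bullet \to R$ of the trivial $RK$-module with $P_i$ finitely generated for $i \leqslant n$, the comparison theorem supplies a chain map $\tau \colon P_\bullet \to \alpha^*P_\bullet$ over $\id_R$; assembling $P_\bullet$, $\alpha^*P_\bullet$ and $\tau$ into a mapping torus yields a projective resolution $T_\bullet \to R$ of the trivial $RG$-module with $T_i \cong (RG\otimes_{RK}P_i)\oplus(RG\otimes_{RK}P_{i-1})$, finitely generated for $i \leqslant n$, whose differential is built from $\partial^P$ together with a monodromy operator $\id - \mu$, where $\mu$ combines multiplication by $t$ with the chain map $\tau$. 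Tensoring over $RG$ with $\nov R G \phi$ identifies $\nov R G \phi \otimes_{RG} T_\bullet$ with the mapping cone of $\id - \mu$ acting on $\nov R G \phi \otimes_{RK} P_\bullet$. Now $\mu$ raises the $t$-adic order by exactly one, so $\id - \mu$ is injective in every degree, and in degrees $\leqslant n$ — where the relevant modules are finitely generated free over $\nov R G \phi$ — it is invertible, with inverse $\sum_{k \geqslant 0}\mu^k$. Since the cokernel of $\id - \mu$ is then concentrated in degrees $> n$, a short chase of the mapping-cone long exact sequence gives $\homol_i(G;\nov R G \phi) = 0$ for all $i \leqslant n$; rerunning the construction with $t^{-1}$ and $\alpha^{-1}$ in place of $t$ and $\alpha$ yields the same for $\nov R G {-\phi}$.

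For the converse $(2)\Rightarrow(1)$ I would invoke the work of Bieri, Renz and Schweitzer. By the chain-contraction corollary above, the hypothesis says that, for a free resolution $F_\bullet \to R$ of the trivial $RG$-module which is finitely generated free in degrees $\leqslant n$, both complexes $\nov R G {\pm\phi}\otimes_{RG}F_\bullet$ admit partial chain contractions of length $n$. This is exactly the statement that $[\phi]$ and $[-\phi]$ both lie in the $n$-th homological invariant $\Sigma^n(G;R)$: Sikorav identifies membership in $\Sigma^1$ with vanishing of the appropriate first Novikov homology, and this identification extends to all degrees by Bieri and Schweitzer's work. One then applies the Bieri--Renz theorem, which asserts that for a discrete character $\phi$ the kernel $\ker\phi$ is of type $\typeFP_n(R)$ if and only if $\{[\phi],[-\phi]\} \subseteq \Sigma^n(G;R)$. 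Concretely, the mechanism is that a length-$n$ chain contraction over $\nov R G \phi$ truncates to a partial chain contraction of $F_\bullet$ over $RG$ that fails to be exact only on the part of $F_\bullet$ supported where $\phi$ is large; splicing this with its $-\phi$-counterpart over a sufficiently wide band of $\phi$-levels cuts $F_\bullet$ down, as a complex of $RK$-modules, to one with finitely generated free terms still resolving the trivial $RK$-module $R$ through degree $n$.

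I expect the second implication to be the main obstacle. Extracting a genuinely finite-type partial resolution over $RK$ from the two one-sided truncated contractions is the core of $\Sigma$-invariant theory and requires a delicate telescoping argument; the top degree $i = n$ needs particular care, since $G$ being only of type $\typeFP_n(R)$ means $F_{n+1}$ need not be finitely generated, so one ends up with the finite generation of a suitable module of $RK$-cycles rather than a genuine finite portion of a resolution — still enough to conclude type $\typeFP_n(R)$.
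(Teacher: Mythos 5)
The paper does not prove this theorem: it is stated as a citation to Fisher's work (Theorems 5.3 and 6.5 of \cite{Fisher2024}), and the surrounding text explicitly attributes it to Sikorav's insight combined with the work of Bieri, Renz, and Schweitzer. So there is no internal proof to compare against; your sketch can only be assessed against the cited literature, and it tracks that lineage accurately. For $(2)\Rightarrow(1)$ you correctly reduce to two ingredients: the Bieri--Renz theorem (for a discrete character $\phi$, the kernel is of type $\typeFP_n(R)$ precisely when both $[\phi]$ and $[-\phi]$ lie in $\Sigma^n(G;R)$) and the Sikorav--Schweitzer identification of $\Sigma^n$ with the locus of vanishing Novikov homology; this is exactly the route taken in the reference, and your remark that the ``telescoping'' over a band of $\phi$-levels is where the real work lives is fair. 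For $(1)\Rightarrow(2)$ the algebraic mapping-torus argument is a clean and correct alternative to deducing this direction from the $\Sigma$-invariant machinery, and it is essentially the standard proof of the ``mapping torus vanishing'' for Novikov homology.

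Two points worth tightening. First, your claim that $\id-\mu$ is invertible on $\nov{R}{G}{\phi}\otimes_{RK}P_i$ only in degrees $i\leqslant n$ is the correct one: on a finitely generated free module the Neumann series $\sum_{k\geqslant 0}\mu^k$ converges entrywise, whereas on an infinitely generated one it need not preserve the direct sum, so finite generation is genuinely used for surjectivity (injectivity is free since $\mu$ strictly raises the $\phi$-filtration). Second, since $\id-\mu$ is injective everywhere, the cone is quasi-isomorphic to $\operatorname{coker}(\id-\mu)$, a complex concentrated in degrees $>n$; a one-line remark that such a complex has vanishing homology in degree $n$ as well (its degree-$n$ cycles sit in the zero module) would make the deduction of $\homol_i(G;\nov{R}{G}{\phi})=0$ for all $i\leqslant n$, including the top degree, airtight. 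Neither point is a gap in the argument, just exposition that a careful reader would want spelled out.
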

A character $\phi$ satisfying any of the two equivalent conditions will be called \emph{$\typeFP_n(R)$-fibred}.

\begin{rmk}
	\label{fibring open}
	Being $\typeFP_n(R)$-fibred is an open condition, that is, if $\phi$ is $\typeFP_n(R)$-fibred
	then there exists an open neighbourhood $U$ of $\phi$ in $S(G)$ such that every character in $U$ is $\typeFP_n(R)$-fibred. For a proof, see \cite{BieriRenz1988}*{Theorems A and B}.
\end{rmk}

\subsection{Poincar\'e duality and group pairs}
\label{sec PD}

Throughout this subsection, $R$ will be a commutative ring. This assumption seems necessary to ensure that the theory of Poincar\'e duality behaves well under passing to finite-index overgroups. However, we will never perform the operation of passing to such an overgroup, and hence it is possible (indeed, likely) that our discussion holds verbatim for rings $R$ that are not necessarily commutative.

\begin{dfn}[Poincar\'e-duality group]
We will say that a group $G$ is a \emph{Poincar\'e-duality group of dimension $n$ over $R$}, or a \emph{$\PD^n_R$-group} for short, if the group is of type $\typeFP(R)$ and 
\[
 \homol^i(G;R G) \cong \left\{ \begin{array}{cl}
	0 & \textrm{ if } i \neq n, \\
	R & \textrm{ if } i = n,
\end{array} \right.
\]
where the isomorphism is one of $R$-modules. The action of $RG$ on $R = \homol^n(G;R G)$ is the \emph{orientation action}; if it is trivial, the $\PD^n_R$ group is \emph{orientable}.
\end{dfn}

It follows immediately from the definition that $\mathrm{cd}_R(G) = n$. 

The definition above follows that of Bieri~\cites{Bieri1972,Bieri1981} for $R = \Z$, and that of Dicks--Dunwoody~\cite{DicksDunwoody1989book}*{Chapter V} for arbitrary commutative coefficients (though \cite{Bieri1972}*{Section 3.2} briefly discusses the case of a general $R$, quickly focusing on $R = \Q$).

We are not requiring the group $G$ above to be finitely presented -- this way we deviate from the alternative definition of Poincar\'e-duality groups due to Wall \cite{Wall1967}. Also, when $R = \Z$, the underlying ring is often purged from the notation, and one talks simply about (orientable) Poincar\'e-duality groups in dimension $n$.

By a \emph{group pair} $(G, \mathcal H)$ we will understand a pair of a group $G$ and a tuple $\mathcal H$ of subgroups of $G$ (formally, a function from some possibly infinite indexing set to the set of subgroups of $G$). If $\mathcal H$ is non-empty, we define the \emph{associated module} $\Delta_{G / \mathcal H}$ to be the kernel of the augmentation map $\bigoplus_{H \in \mathcal H} R G/H \to R$, with 
\[R G/H = R G \otimes_{R H} R\]
being the free abelian group on the left cosets of $H$, with the obvious $G$ action.

Let $(G, \mathcal H)$ be a group pair and let $M$ be an $RG$-module. The homology and cohomology of the pair $(G,\mathcal H)$ with coefficients in $M$ are defined by 
\begin{align*}
	\homol_i(G,\mathcal H; M) &= \operatorname{Tor}_{i-1}^{RG}(\Delta_{G/\mathcal H},M) \\
	\homol^i(G,\mathcal H; M) &= \operatorname{Ext}_{RG}^{i-1}(\Delta_{G/\mathcal H},M)
\end{align*}
respectively, provided $\mathcal H \neq \varnothing$ (see \cite{BieriEckmann1978}*{Section I.1} and \cite{DicksDunwoody1989book}*{Section V.7}). By convention, $\homol_i(G,\varnothing;M) = \homol_i(G;M)$, and similarly for cohomology. This definition is quite useful, because one immediately sees that given a short exact sequence $0 \rightarrow M_1 \rightarrow M_2 \rightarrow M_3 \rightarrow 0$ of $RG$-modules, there are associated long exact sequences in the homology and cohomology of pairs.

Group pairs appear naturally, as they correspond to the notion of a pair of spaces. Crucially, group pairs satisfy the familiar long exact sequence in cohomology, which is just the long exact sequence in $\operatorname{Ext}^\bullet_{RG}(-,M)$ associated to the short exact sequence
\[
	0 \rightarrow \Delta_{G/\mathcal H} \rightarrow \bigoplus_{H \in \mathcal H} RG/H \rightarrow R \rightarrow 0.
\]

\begin{prop}
	\label{les for pairs}
Let $(G, \mathcal H)$ be a group pair, and let $M$ be an $RG$-module. We have the following long exact sequence in cohomology
\[
\adjustbox{scale=1,center}{
	\begin{tikzcd}[column sep=tiny]
		\cdots \arrow[r]
		& \homol^{n}(G,\mathcal H; M) \arrow[r] \arrow[d, phantom, ""{coordinate, name=ZZ}] 
		& \homol^{n}(G;  M) \arrow[r] 
		& \prod_{H \in \mathcal H} \homol^{n}(H;  M ) \arrow[dll,  rounded corners, to path={ -- ([xshift=2ex]\tikztostart.east)|- (ZZ) [near end]\tikztonodes-| ([xshift=-2ex]\tikztotarget.west)-- (\tikztotarget)}] \\
		& \homol^{n+1}(G,\mathcal H; M) \arrow[r] & \makebox[0pt][l]{$\cdots$} \phantom{\homol^{n}(G;  M)}
	\end{tikzcd}
}
\]
where $M$ is an $RH$-module by restriction.
\end{prop}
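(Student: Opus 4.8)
The plan is to obtain the long exact sequence directly by applying the contravariant functor $\operatorname{Ext}^\bullet_{RG}(-, M)$ to the short exact sequence
\[
0 \rightarrow \Delta_{G/\mathcal H} \rightarrow \bigoplus_{H \in \mathcal H} RG/H \rightarrow R \rightarrow 0
\]
already exhibited above, and then to identify the three families of terms that appear. I will assume $\mathcal H \neq \varnothing$; in the remaining case the displayed sequence is unavailable, but the product in the statement is empty and the asserted sequence reduces, through the stated conventions, to the tautology $\homol^n(G, \varnothing; M) = \homol^n(G; M)$.

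Writing out the long exact $\operatorname{Ext}$-sequence and substituting $\operatorname{Ext}^j_{RG}(R, M) = \homol^j(G; M)$ (the definition of group cohomology) and $\operatorname{Ext}^j_{RG}(\Delta_{G/\mathcal H}, M) = \homol^{j+1}(G, \mathcal H; M)$ (the definition of the cohomology of the pair, with its inherent degree shift), I get
\[
\cdots \rightarrow \homol^j(G; M) \rightarrow \operatorname{Ext}^j_{RG}\Bigl(\textstyle\bigoplus_{H \in \mathcal H} RG/H,\, M\Bigr) \rightarrow \homol^{j+1}(G, \mathcal H; M) \rightarrow \homol^{j+1}(G; M) \rightarrow \cdots.
\]
Reindexing by $n = j + 1$ turns this into the sequence claimed in the proposition, provided the middle term is identified with $\prod_{H \in \mathcal H} \homol^j(H; M)$.

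That identification is the only step carrying content, and it is routine. First, a direct sum over $\mathcal H$ of projective $RG$-resolutions of the modules $RG/H$ resolves $\bigoplus_H RG/H$, and $\operatorname{Hom}_{RG}$ out of a direct sum is the product of the $\operatorname{Hom}$'s, so $\operatorname{Ext}^j_{RG}(\bigoplus_H RG/H, M) \cong \prod_H \operatorname{Ext}^j_{RG}(RG/H, M)$. Second, each $RG/H = RG \otimes_{RH} R$ is induced from the trivial $RH$-module; since $RG$ is free, hence flat, as a right $RH$-module, the functor $RG \otimes_{RH} -$ is exact and sends projectives to projectives, so the Eckmann--Shapiro lemma (tensor--hom adjunction applied to a projective $RH$-resolution of $R$) yields $\operatorname{Ext}^j_{RG}(RG \otimes_{RH} R, M) \cong \operatorname{Ext}^j_{RH}(R, M) = \homol^j(H; M)$, with $M$ restricted to an $RH$-module. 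Splicing these isomorphisms into the sequence above completes the argument. I do not foresee a genuine obstacle here: everything reduces to standard homological algebra --- the long exact $\operatorname{Ext}$-sequence, the compatibility of $\operatorname{Ext}$ with direct sums, and Eckmann--Shapiro --- and the one thing to watch is the degree shift in $\homol^i(G, \mathcal H; -) = \operatorname{Ext}^{i-1}_{RG}(\Delta_{G/\mathcal H}, -)$, which is precisely what makes the connecting map $\prod_H \homol^n(H; M) \to \homol^{n+1}(G, \mathcal H; M)$ land in degree $n+1$.
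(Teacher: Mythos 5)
Your proof is correct and takes exactly the route the paper indicates (the paper merely asserts that the long exact sequence arises from applying $\operatorname{Ext}^\bullet_{RG}(-,M)$ to the displayed short exact sequence, citing Bieri--Eckmann for $R=\Z$). You have simply filled in the two identifications the paper leaves implicit --- the compatibility of $\operatorname{Ext}$ with direct sums in the first variable and the Eckmann--Shapiro isomorphism $\operatorname{Ext}^j_{RG}(RG\otimes_{RH}R,M)\cong\operatorname{Ext}^j_{RH}(R,M)$ --- and both are handled correctly, including the degree shift built into $\homol^i(G,\mathcal H;-)=\operatorname{Ext}^{i-1}_{RG}(\Delta_{G/\mathcal H},-)$ and the degenerate case $\mathcal H=\varnothing$.
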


When $R = \Z$, the above is \cite{BieriEckmann1978}*{Proposition 1.1}. For general $R$, one easily obtains an analogous long exact sequence in homology.

Given a subgroup $G_0 \leqslant G$, we define $\mathcal H_{G_0}$ to be the tuple obtained from $\mathcal H$ by replacing each $H \in \mathcal H$ by the subgroups of the form $x^{-1} Hx \cap G_0$ where $x$ runs over the double-coset representatives of $H \backslash G/ G_0$. One readily checks that $\Delta_{G/ \mathcal H}$ restricted to being an $R G_0$-module is isomorphic to $\Delta_{G_0 / \mathcal {H}_{G_0}}$.

When $G_0 = \ker \phi$ for a homomorphism $\phi$ with domain $G$, we refer to $\mathcal H_{\ker \phi}$ as $\mathcal H_\phi$.

\begin{dfn}[Poincar\'e-duality pair]
Following \cites{BieriEckmann1978,DicksDunwoody1989book}, given a group $G$ and a non-empty tuple $\mathcal H$ of subgroups of $G$, we say that $(G, \mathcal H)$ is an \emph{orientable Poincar\'e-duality pair of dimension $n$ over $R$} (or \emph{orientable $\PD^n_R$-pair}) if
the group $G$ is of type $\typeFP(R)$ and
\[
\homol^i(G;R G) \cong \left\{ \begin{array}{cl}
	0 & \textrm{ if } i \neq n-1, \\
	\Delta_{G / \mathcal H} & \textrm{ if } i = n-1,
\end{array} \right.
\]
where the isomorphism is that of $RG$-modules.

For convenience, we define $(G,\varnothing)$ to be an  \emph{orientable Poincar\'e-duality pair of dimension $n$ over $R$} if $G$ is an orientable $\PD^n_R$-group.

If $R$ is not specified, then it is taken to be $\Z$.
\end{dfn}

Again, directly from the definition follows that $\mathrm{cd}_R(G) = n-1$. 

Replacing $\Z$ by $R$ in the proof of \cite{BieriEckmann1978}*{Theorem 6.2}, and noting that we are in the orientable case, yields that $(G, \mathcal H)$ for $\mathcal H \neq \varnothing$ is an orientable $\PD_R^n$-pair \iff 
\[
\homol^i(G, \mathcal H;R G) \cong \left\{ \begin{array}{cl}
	0 & \textrm{ if } i \neq n, \\
	R & \textrm{ if } i = n,
\end{array} \right.
\]
as $RG$-modules, where the action on $R$ is trivial, and the pair $(G,\mathcal H)$ is of type $\typeFP(R)$, meaning that $\Delta_{G / \mathcal H}$ admits a finite resolution by finitely generated projective $RG$-modules (see also \cite{DicksDunwoody1989book}*{Definition V.7.1}).

\begin{prop}
	\label{peripheral PD}
If $(G,\mathcal H)$ is an orientable $\PD_R^{n}$-pair with $\mathcal H \neq \varnothing$, then $\mathcal H$ is finite, and every $H \in \mathcal H$ is itself an orientable  $\PD_R^{n-1}$-group.
\end{prop}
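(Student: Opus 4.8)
The plan is to extract, from the Poincaré-duality hypothesis on the pair $(G,\mathcal H)$, a cohomological statement about each peripheral subgroup $H$, and then to recognize it as the $\PD^{n-1}_R$ condition. First I would record the finiteness consequences: since $(G,\mathcal H)$ is of type $\typeFP(R)$, the module $\Delta_{G/\mathcal H}$ has a finite resolution by finitely generated projective $RG$-modules, and from the defining short exact sequence
\[
0 \to \Delta_{G/\mathcal H} \to \bigoplus_{H \in \mathcal H} RG/H \to R \to 0
\]
together with $R$ being of type $\typeFP(R)$ (as $\mathrm{cd}_R(G) = n-1 < \infty$ and $G$ is $\typeFP(R)$) it follows that $\bigoplus_{H \in \mathcal H} RG/H$ is of type $\typeFP(R)$; in particular it is finitely generated as an $RG$-module, which already forces $\mathcal H$ to be finite. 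Then each summand $RG/H$ is an $RG$-module direct summand of a finitely generated one, hence itself of type $\typeFP(R)$, and by Shapiro's lemma this transfers to $H$: since $\homol^i(H; RH) \cong \homol^i(G; RG/H \otimes_{RH} RH)$-type identifications and the fact that $RG$ is free over $RH$, one deduces $H$ is of type $\typeFP(R)$.

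The heart of the argument is the computation of $\homol^i(H; RH)$. I would use the long exact sequence of the pair (\cref{les for pairs}) with coefficients $M = RG$:
\[
\cdots \to \homol^{i}(G; RG) \to \prod_{H \in \mathcal H} \homol^{i}(H; RG) \to \homol^{i+1}(G,\mathcal H; RG) \to \homol^{i+1}(G; RG) \to \cdots
\]
By the $\PD^n_R$-pair hypothesis, $\homol^i(G;RG)$ vanishes except in degree $n-1$ where it is $\Delta_{G/\mathcal H}$, and $\homol^i(G,\mathcal H;RG)$ vanishes except in degree $n$ where it is $R$ (with trivial action). Feeding this into the long exact sequence, for $i \neq n-2, n-1$ one gets $\prod_{H} \homol^i(H;RG) = 0$, and in the two exceptional degrees one gets a four-term exact sequence
\[
0 \to \homol^{n-2}\big(\textstyle\bigoplus\big) \to \Delta_{G/\mathcal H} \to R \to \homol^{n-1}\big(\textstyle\bigoplus\big) \to 0
\]
where I abbreviate $\homol^\bullet(\bigoplus) = \prod_H \homol^\bullet(H;RG)$. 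Now the map $\Delta_{G/\mathcal H} \to R$ appearing here should be identified with (a shift of) the map in the defining sequence; since that map is the restriction of the augmentation $\bigoplus RG/H \to R$ to the submodule $\Delta_{G/\mathcal H}$, which by definition is its \emph{kernel}, this map is zero. (Here one must check the identification of the connecting map carefully — this is the main technical point below.) Granting that, we obtain $\homol^{n-2}(H;RG) = 0$ for all $H$ and $\prod_H \homol^{n-1}(H;RG) \cong R$.

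Finally I would convert cohomology with $RG$-coefficients into cohomology with $RH$-coefficients. Since $RG \cong \bigoplus_{gH \in G/H} (RH) \cdot g$ is a free $RH$-module, and $H$ is of type $\typeFP(R)$, we have $\homol^i(H; RG) \cong \homol^i(H; RH) \otimes_{RH} RG$ — more precisely, $\homol^i(H;RG)$ is a (possibly infinite) direct sum of copies of $\homol^i(H;RH)$ indexed by $G/H$ (using that finite generation of the relevant resolution lets cohomology commute with the direct sum). Hence $\homol^i(H;RG) = 0$ forces $\homol^i(H;RH) = 0$ for all $i \neq n-1$, and the nonvanishing in degree $n-1$ gives $\homol^{n-1}(H;RH) \neq 0$; combined with $\mathrm{cd}_R(H) \leqslant \mathrm{cd}_R(G) = n-1$ and $\typeFP(R)$, a rank/duality count (as in the characterization $\mathrm{cd}_R(G)=n$ iff $\homol^n(G;RG)\neq 0$ quoted in the preliminaries) pins down $\homol^{n-1}(H;RH) \cong R$. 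The orientability of the pair should feed through the identification $\prod_H \homol^{n-1}(H;RG) \cong R$ with trivial action to give that each $H$ is orientable as well, completing the proof.

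The main obstacle I anticipate is the bookkeeping in the long exact sequence: correctly identifying the connecting homomorphism $\Delta_{G/\mathcal H} \to R$ (showing it is the zero map, so that the sequence splits into the clean statements above) and tracking the $RG$-module versus $RH$-module structures through Shapiro's lemma and through the passage from $RG$-coefficients to $RH$-coefficients. A secondary subtlety is ensuring cohomology commutes with the infinite direct sum decomposition of $RG$ over $RH$, which is exactly where the $\typeFP(R)$ finiteness of $H$ (hence a finitely generated projective resolution) is used.
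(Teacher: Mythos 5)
Your finiteness argument is fine, but the main computation misreads the long exact sequence and the conclusion does not follow as written. Plugging in the Poincar\'e-duality hypotheses (namely $\homol^i(G;RG)=0$ for $i \ne n-1$ with $\homol^{n-1}(G;RG)\cong\Delta_{G/\mathcal H}$, and $\homol^i(G,\mathcal H;RG)=0$ for $i \ne n$ with $\homol^{n}(G,\mathcal H;RG)\cong R$), the relevant portion of the sequence from \cref{les for pairs} reads
\[
0 \to \prod_{H}\homol^{n-2}(H;RG) \to 0 \to \Delta_{G/\mathcal H} \to \prod_{H}\homol^{n-1}(H;RG) \to R \to 0,
\]
so the \emph{only} degree not forced to vanish is $n-1$, and the correct conclusion is the short exact sequence
\[
0 \to \Delta_{G/\mathcal H} \to \prod_{H}\homol^{n-1}(H;RG) \to R \to 0,
\]
\emph{not} your four-term sequence featuring a connecting map $\Delta_{G/\mathcal H} \to R$. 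There is no such map in the sequence, so the ``main technical point'' you flag (showing that map is zero) is vacuous, and the chain of deductions built on it collapses. The map you should instead be identifying is the restriction $\homol^{n-1}(G;RG)\to\prod_H\homol^{n-1}(H;RG)$ (injective here) and the connecting map $\prod_H\homol^{n-1}(H;RG)\to\homol^n(G,\mathcal H;RG)\cong R$ (surjective).

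Even after correcting the sequence, the remaining steps are underjustified. From the short exact sequence above and the decomposition $\homol^{n-1}(H;RG)\cong\bigoplus_{G/H}\homol^{n-1}(H;RH)$ (valid since $H$ is $\typeFP(R)$ and $RG$ is $RH$-free), you want to conclude $\homol^{n-1}(H;RH)\cong R$ with trivial action — but knowing only that $\prod_H\homol^{n-1}(H;RG)$ sits in an extension of $R$ by $\Delta_{G/\mathcal H}$ does not automatically identify it with $\bigoplus_H RG/H$: two extensions of $R$ by $\Delta$ need not have isomorphic middle terms, so a comparison with the defining sequence is needed before any ``rank/duality count'' can land. The orientability claim at the end is likewise asserted rather than proved. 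For comparison, the paper does not attempt a self-contained argument: it cites \cite{BieriEckmann1978}*{Theorem~4.2(ii)--(iii)} for $R=\Z$ and \cite{DicksDunwoody1989book}*{Theorem~V.7.11} for general commutative $R$ for the statement that each $H$ is an orientable $\PD^{n-1}_R$-group, and derives finiteness of $\mathcal H$ directly from finite generation of $\Delta_{G/\mathcal H}$; that last step matches yours in spirit, but the heart of the proposition you propose to reprove has a genuine gap.
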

\begin{proof}
	When $R = \Z$, this is \cite{BieriEckmann1978}*{Theorem 4.2(ii) and (iii)}. For general commutative $R$, the second assertion is covered by \cite{DicksDunwoody1989book}*{Theorem V.7.11}.
	
	Since the $RG$-module $\Delta_{G / \mathcal H}$ admits a finite resolution by finitely generated projective $RG$-modules, it must be finitely generated, which is only possible if $\mathcal H$ is finite.
\end{proof}

\begin{rmk}
	\label{PD}
Somewhat unsurprisingly, Poincar\'e-duality groups and pairs have the familiar duality between between homology and cohomology. Concretely, given an $RG$-module $M$, a possibly empty $\mathcal H$, and an orientable $\PD_R^n$-pair $(G,\mathcal H)$ we have natural isomorphisms
\[
\homol^i(G, \mathcal H; M) \cong \homol_{n-i}(G; M),
\]
for all integers $i$, and similarly starting from homology of the pair, see \cite{BieriEckmann1978}*{Definition 4.1 and Theorem 6.2} and \cite{DicksDunwoody1989book}*{Definition V.7.1}.
\end{rmk}

As in the case of finiteness properties, if $(G,\mathcal H)$ is an orientable $\PD_R^n$-pair and $S$ is an $R$-algebra, then $(G,\mathcal H)$ is an orientable  $\PD_S^n$-pair (see \cite{DicksDunwoody1989book}*{Proposition V.3.7}). We will use this fact in a situation where  $R$ is a prime field and $S$ is another field of the same characteristic.

\subsection{RFRS}

A group $G$ is \emph{residually finite rationally solvable (RFRS)} if it admits a normal residual chain $G = G_0 \geqslant G_1 \geqslant G_2 \geqslant \dots$ of finite-index subgroups such that  
\[
	\ker(G_i \rightarrow \Q \otimes_\Z G_i/[G_i,G_i]) \leqslant G_{i+1}
\]
for all $i \geqslant 0$. For finitely generated groups, a simpler characterisation was recently obtained by Okun, Schreve, and the first and third authors: a finitely generated group $G$ is RFRS if and only if it is residually \{virtually abelian and locally indicable\} \cite{OkunSchreve2025}*{Theorem 6.3}. Since for finitely generated groups being virtually abelian and locally indicable is the same as being virtually abelian and poly-$\Z$, we see that  a finitely generated group $G$ is RFRS if and only if it is residually \{virtually abelian and poly-$\Z$\} as well.

The RFRS condition was introduced by Agol in \cite{Agol2008}, where he proved that a compact, irreducible $3$-manifold $M$ with virtually RFRS fundamental group virtually fibres over $S^1$ if and only if $\chi(M) = 0$.

\subsection{\texorpdfstring{$L^2$}{L\texttwosuperior}-Betti numbers over fields}

Since our focus in this paper is on RFRS groups, we will present the algebraic approach to $L^2$-homology via Linnell skew-fields. We will also need the positive characteristic version of $L^2$-homology, that is (for now) only defined in an algebraic way.

When $G$ is RFRS and $\K$ is a field, Jaikin-Zapirain in \cite{Jaikin-Zapirain2021} constructs a skew-field $\D_{\K G}$ that contains $\K G$ as a subring, and has two additional properties: the embedding $\K G \subseteq \D_{\K G}$ is \emph{Hughes-free}, and  the skew-field $\D_{\K G}$ is \emph{universal}. The precise meaning of these adjectives is not really essential for us here; one important consequence of Hughes-freeness is that $\D_{\K G}$ is unique up to isomorphisms of skew-fields containing $\K G$ that fix $\K G$ (this result is due to Hughes~\cite{Hughes1970}). We will therefore treat $\D_{\K G}$ as well-defined unique objects.

When $\K = \Q$, the construction of $\D_{\Q G}$ is older, as it goes back to Linnell~\cite{Linnell1993}, and it works in a much greater generality than that of RFRS groups. Jaikin-Zapirain's definition also covers a larger class of groups.

\begin{dfn}
	Given a RFRS group $G$ and a field $\K$, we define
	\[
	\betti{G}{i}{\K} = \dim_{\D_{\K G}} \homol_i(G; \D_{\K G}) \in \N \sqcup \{ \infty\}
	\]
	to be the \emph{$i$th $L^2$-Betti number of $G$ over $\K$}.
\end{dfn}

The notation stems from the fact that $\betti{G}{i}{\Q}$ coincides with the classical $i$th $L^2$-Betti number $\bettiQ{G}{i}$. 
More information about this can be found in \cite{Linnell1993} and \cite{Lueck2002}*{Chapter 10}.

 One of the key properties of $L^2$-Betti numbers over fields is the way they behave under passing to finite index.
 
 \begin{lem}[\cite{Fisher2024}*{Lemma 6.3}]
 	\label{betti nrs scaling}
 Let $G_0$ be a finite-index subgroup of a RFRS group $G$. For all $i$ and all fields $\K$ we have
\[
\betti{G_0}{i}{\K} = |G:G_0| \betti{G}{i}{\K}.
\]
 \end{lem}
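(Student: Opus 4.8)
The plan is to carry the computation of $\betti{G_0}{i}{\K}$ over into the skew-field $\D_{\K G}$ attached to the \emph{larger} group, where the index appears as an honest change-of-scalars factor. Since a subgroup of a RFRS group is again RFRS, $\D_{\K G_0}$ is defined, and I would begin by recalling the structural input coming from the theory of Hughes-free skew-fields (cf.\ \cite{Jaikin-Zapirain2021}): after restricting the Hughes-free embedding $\K G \subseteq \D_{\K G}$ to $\K G_0$, the division closure of $\K G_0$ inside $\D_{\K G}$ is $\D_{\K G_0}$, and any transversal $\{t_1,\dots,t_d\}$ of $G_0$ in $G$ (with $d := |G:G_0|$) is a basis of $\D_{\K G}$ as a right $\D_{\K G_0}$-module. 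Consequently the multiplication map $\K G \otimes_{\K G_0}\D_{\K G_0} \to \D_{\K G}$ carries the evident right $\D_{\K G_0}$-basis $\{t_j \otimes 1\}$ onto $\{t_j\}$, hence is an isomorphism; being visibly left-$\K G$-linear as well, it identifies $\D_{\K G}$ with the induced bimodule $\operatorname{Ind}_{G_0}^{G}\D_{\K G_0} = \K G\otimes_{\K G_0}\D_{\K G_0}$, the residual $\D_{\K G_0}$-action on $\D_{\K G}$ being right multiplication inside $\D_{\K G}$.

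The second step is Shapiro's Lemma applied to this identification. Taking coefficients in the $(\K G_0,\D_{\K G_0})$-bimodule $\D_{\K G_0}$ and using that Shapiro's isomorphism is natural, hence $\D_{\K G_0}$-linear, I obtain an isomorphism of $\D_{\K G_0}$-modules
\[
\homol_i(G;\D_{\K G}) \;\cong\; \homol_i\big(G;\operatorname{Ind}_{G_0}^{G}\D_{\K G_0}\big) \;\cong\; \homol_i(G_0;\D_{\K G_0}),
\]
whose right-hand term has $\D_{\K G_0}$-dimension $\betti{G_0}{i}{\K}$ by definition.

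On the other hand $\homol_i(G;\D_{\K G})$ is a $\D_{\K G}$-module of dimension $\betti{G}{i}{\K}$, again by definition, and restricting scalars along $\D_{\K G_0}\subseteq\D_{\K G}$ multiplies the dimension by $d$, precisely because $\D_{\K G}$ is a free $\D_{\K G_0}$-module of rank $d$. Hence $\dim_{\D_{\K G_0}}\homol_i(G;\D_{\K G}) = d\cdot\betti{G}{i}{\K}$, and comparing this with the display above yields $\betti{G_0}{i}{\K} = |G:G_0|\cdot\betti{G}{i}{\K}$. (The identity reads correctly also when one side is infinite.)

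Apart from routine tracking of which one-sided module structure survives each functor — exactly the left/right subtlety flagged in \cref{sec modules} — the one place where anything nontrivial happens is the freeness of $\D_{\K G}$ over $\D_{\K G_0}$ of rank $|G:G_0|$, with transversals as bases; this is the genuine input from the Hughes-free/Linnell skew-field machinery, and I would quote it rather than reprove it. Everything else is a diagram chase through Shapiro's Lemma together with the elementary behaviour of dimension over a skew-field under restriction of scalars.
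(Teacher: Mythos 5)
The paper does not prove this lemma; it simply cites \cite{Fisher2024}*{Lemma 6.3}, so there is no internal argument to compare against. Your proof is correct and is the standard one — the crucial input that $\D_{\K G}$ is free of rank $|G:G_0|$ over $\D_{\K G_0}$ (with a transversal as basis, a consequence of the Hughes-free/crossed-product structure, cf.\ the discussion after \cref{mapping torus}), followed by the identification $\D_{\K G} \cong \K G \otimes_{\K G_0} \D_{\K G_0}$ and Shapiro's lemma — and it is almost certainly the same argument as in the cited reference.
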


Another important property of $L^2$-Betti numbers is that they vanish for mapping tori.

\begin{thm}[Mapping Torus Theorem, \cite{Fisher2024}*{Theorem 6.4}]
\label{mapping torus}
Suppose that $G$ is a RFRS group, that $\phi \colon G \to \Z$ is an epimorphism, and let $\K$ be a field. If $\ker \phi$ is of type $\typeFP_n(\K)$, then $\betti{G}{i}{\K} = 0$ for every $i \leqslant n$.
\end{thm}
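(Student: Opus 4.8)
The plan is to connect two a priori different vanishing phenomena — vanishing of Novikov homology, which by \cref{Sikorav} detects the finiteness of $K := \ker \phi$, and vanishing of $L^2$-homology over $\K$, which is what we want — by exhibiting a single skew field that contains both the Novikov ring $\nov{\K}{G}{\phi}$ and the Jaikin-Zapirain skew field $\D_{\K G}$ as $\K G$-subalgebras.

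First I would assemble the structural input. Since $G$ is RFRS, so is its subgroup $K$, and we have the Hughes-free embeddings $\K K \hookrightarrow \D_{\K K}$ and $\K G \hookrightarrow \D_{\K G}$. As $\phi$ is surjective with kernel $K$, the ring $\K G$ is a crossed product of $\K K$ with $\Z$, with generator $t$ satisfying $\phi(t) = 1$; by uniqueness of Hughes-free embeddings the conjugation automorphism of $\K K$ by $t$ extends to an automorphism $\alpha$ of $\D_{\K K}$, and the twisted Laurent series ring $\mathbb{E} := \D_{\K K}((t);\alpha)$ is again a skew field. Now two things happen inside $\mathbb{E}$. On the one hand, the description of Novikov rings for $\Z$-valued characters recorded in \cref{section Novikov} identifies $\nov{\K}{G}{\phi}$ with the twisted Laurent series $\K K((t))$, which is a $\K G$-subalgebra of $\mathbb{E}$. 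On the other hand, it is known (Jaikin-Zapirain \cite{Jaikin-Zapirain2021}; in the rational case already \cite{Kielak2020a}) that the division closure of $\K G$ inside $\mathbb{E}$ is a Hughes-free division ring of fractions of $\K G$, hence equals $\D_{\K G}$ by uniqueness. Thus we obtain $\K G$-algebra embeddings
\[
\nov{\K}{G}{\phi} \hookrightarrow \mathbb{E} \hookleftarrow \D_{\K G}.
\]

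With this in place the argument is short. Since $K$ is of type $\typeFP_n(\K)$ and $G/K \cong \Z$, the group $G$ is itself of type $\typeFP_n(\K)$, so \cref{Sikorav} applies and gives $\homol_i(G; \nov{\K}{G}{\phi}) = 0$ for all $i \leqslant n$. Fixing a free resolution $C_\bullet \to \K$ of the trivial $\K G$-module, the complex $\nov{\K}{G}{\phi} \otimes_{\K G} C_\bullet$ therefore admits a partial chain contraction of length $n$; applying the $\K G$-algebra map $\nov{\K}{G}{\phi} \to \mathbb{E}$ to it yields a partial chain contraction of length $n$ for $\mathbb{E} \otimes_{\K G} C_\bullet$, so $\homol_i(G; \mathbb{E}) = 0$ for all $i \leqslant n$ (this is exactly the consequence of the chain-contraction corollary noted in the Preliminaries). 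Finally, $\mathbb{E}$ is a nonzero module over the skew field $\D_{\K G}$, hence free, hence faithfully flat; a base-change computation, routine given the left/right-module conventions of \cref{sec modules}, shows $\homol_i(G; \mathbb{E}) \cong \mathbb{E} \otimes_{\D_{\K G}} \homol_i(G; \D_{\K G})$, and therefore $\homol_i(G; \D_{\K G}) = 0$, i.e.\ $\betti{G}{i}{\K} = 0$, for every $i \leqslant n$.

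The only non-formal step — and hence the main obstacle — is the structural claim that $\D_{\K G}$ embeds as a $\K G$-algebra into $\mathbb{E} = \D_{\K K}((t);\alpha)$. This rests on the facts that a twisted Laurent series ring over a skew field is a skew field and that Hughes-free division rings of fractions are unique; the point requiring genuine care is verifying that the division closure of $\K G$ inside $\mathbb{E}$ is itself a Hughes-free $\K G$-ring, so that the uniqueness statement can be invoked. Everything else — the passage from a partial chain contraction over a subring to one over an overring, and the twisting bookkeeping in the faithfully-flat base change — is formal once the machinery of the Preliminaries is in hand.
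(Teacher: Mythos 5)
The paper does not prove this theorem itself---it quotes it as \cite{Fisher2024}*{Theorem 6.4}---so there is no in-paper argument to compare against. Your reconstruction is correct and aligns precisely with the infrastructure the paper sets up just before \cref{virt alg fibring}: there the authors record that $\D_{\K G}$ is the Ore localisation of the crossed product $\D_{\K\ker\phi}(\im\phi)$, that this localisation lies inside the Malcev--Neumann completion (which for $\im\phi\cong\Z$ is exactly your $\mathbb{E}=\D_{\K K}((t);\alpha)$), and that $\nov{\K}{G}{\phi}$ also sits inside the completion, so the common overfield you exhibit is the one the paper has in mind. The only genuine content, as you rightly flag, is that the division closure of $\K G$ inside $\mathbb{E}$ is Hughes-free, so that Hughes's uniqueness theorem identifies it with $\D_{\K G}$; this is nontrivial and rests on Jaikin-Zapirain's analysis rather than being formal, but once it is granted, the remaining steps---applying \cref{Sikorav} after noting $G$ is itself of type $\typeFP_n(\K)$, pushing the partial chain contraction from $\nov{\K}{G}{\phi}$ along the inclusion into $\mathbb{E}$, and then deducing $\homol_i(G;\D_{\K G})=0$ via faithfully flat base change---are routine and correct.
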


For $\K = \Q$ this is a celebrated theorem of L\"uck~\cite{Lueck1994a} (see also \cite{Lueck2002}*{Theorem 7.2}), which holds for many more groups.

The stated theorem can be generalised also in another way: 
instead of requiring $\ker \phi$ to be of type $\typeFP(\K)$, if we insist on $\betti{\ker \phi}{i}{\K}<\infty$ for all $i$, then the conclusion holds even when  the quotient $\Z$ is replaced by any infinite amenable group \cite{FisherKlinge2024}*{Proposition 3.18} -- when $\K = \Q$, this result is due to Gaboriau \cite{Gaboriau2002}*{Th\'eor\`eme 6.6}.

\smallskip

The Mapping Torus Theorem has a partial inverse, the Virtual Algebraic  Fibring Theorem. We will need more than the final statement, and to this end we need to introduce a couple of additional objects.

There are a number of consequences of Hughes-freeness: when $H$ is a subgroup of $G$, we have a natural embedding $\D_{\K H} \subseteq \D_{\K G}$. If $H$ is normal, 
the conjugation action of $G$ on $H$ induces an action of $G$ on $\D_{\K H}$, and this can be used to define a crossed product $\D_{\K H} G/H$. The crossed product naturally sits inside $\D_{\K G}$, and when $H$ is of finite index, the two rings coincide \cite{Jaikin-Zapirain2021}*{Proposition 2.2}. 

Let $G_0$ be a finite-index normal subgroup of $G$, and let $\phi \colon G_0 \to \R$ be a non-trivial homomorphism. We may view $\K G_0$ as a crossed product $(\K \ker \phi) \im(\phi)$. This ring sits inside $\D_{\K \ker \phi} (\im \phi)$, and its Ore localisation, which is $\D_{\K G_0}$, lies inside the Malcev--Neumann completion of $\D_{\K \ker \phi} (\im \phi)$ with respect to the biorder on $\im \phi$ inherited from $\R$. This last ring contains $\nov {\D_{\K \ker \phi}} {\im(\phi)} \id$, which in turn contains $\nov \K {G_0} \phi$. All of this yoga shows that it makes sense to consider the intersection of $\D_{\K G_0}$ and $\nov \K {G_0} \phi$. For an element of $\D_{\K G_0}$, we will say that it \emph{lies in $\nov \K {G_0} \phi$} if it is contained in this intersection.

Given a subset $U \subseteq S(G_0)$, we will say that an element of $\D_{\K G_0}$ \emph{lies in $\nov \K {G_0} U$} if it lies in $\nov \K {G_0} \phi$ for all $\phi \in U$. All such elements form a  ring that we will denote by $\D_{\K G_0} \cap \nov \K {G_0} U$. The key point here is that for different characters $\phi$, we are intersecting $\D_{\K G_0}$ and $\nov \K {G_0} \phi$ inside different ambient rings.

When $U$ is invariant under the conjugation action of $G$ (or, equivalently, of $G/G_0$), then we may form a crossed product 
\[
	(\D_{\K G_0} \cap \nov \K {G_0} U) G/G_0.
\]
Since $\D_{\K G} = \D_{\K G_0} G/G_0$, we may again talk about elements of $\D_{\K G}$ \emph{lying in} $(\D_{\K G_0} \cap \nov \K {G_0} U) G/G_0$.

\begin{thm}\label{virt alg fibring}
	Let $G$ be a finitely generated RFRS group, and let $\K$ be a field. For every finite subset $S \subseteq \D_{\K G}$ there exists a normal subgroup $G_0 \leqslant G$ of finite index and an open subset $U \subseteq S(G_0)$ such that
	\begin{enumerate}
		\item $U$ is invariant under the conjugation action of $G$ and under the antipodal map;
		\item $\mathring{\overline U}$, the interior of the closure of $U$, contains $S(G)$;
		\item all the elements of $S$ lie in $(\D_{\K G_0} \cap \nov \K {G_0} U) G/G_0$.
	\end{enumerate}
\end{thm}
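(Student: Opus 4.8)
The plan is to treat this as the structural engine behind virtual algebraic fibring and to prove it by an induction that descends the RFRS chain, in the spirit of Kielak \cite{Kielak2020a} and the first author \cite{Fisher2024}. The two consequences of Hughes-freeness that I would use as levers are: for a finite-index normal subgroup $H \trianglelefteq G$, the crossed-product decomposition $\D_{\K G} = \D_{\K H}\, G/H$; and, for normal $H$ with $G/H \cong \Z$ and associated character $\psi$, the fact that $\D_{\K G}$ is the Ore localisation of the twisted Laurent ring over $\D_{\K H}$ and sits inside the Malcev--Neumann completion discussed just before the statement, so that $\nov \K G \psi$ becomes a subring of $\D_{\K G}$ once one controls the coefficients that occur. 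It is convenient to prove first a version without the symmetry conditions in (1): normality of $G_0$ is then obtained by replacing it with its core in $G$, and $U$ is made invariant under conjugation and the antipodal map by intersecting it with its finitely many $G/G_0$-translates and with $-U$. Since the sets $U$ produced below are complements of finitely many rational hyperplanes, and so dense, these reductions keep the density used in (2); and shrinking $U$ only enlarges $\D_{\K G_0} \cap \nov \K {G_0} U$, so (3) is unharmed.

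The base case is $G$ virtually abelian, hence virtually $\Z^m$: here $\D_{\K G}$ is the crossed product of the rational function field $\K(\Z^m)$ with the finite group $G/\Z^m$, and every element of $\D_{\K G}$ is a $\K[G/\Z^m]$-linear combination of fractions $p q^{-1}$ with $p, q \in \K[\Z^m]$. If a character $\phi$ of $\Z^m$ attains a strict minimum on the finite set $\supp q$, then factoring out the $\phi$-minimal monomial of $q$ and expanding the geometric series shows that $p q^{-1}$ lies in $\nov \K {\Z^m} \phi$; requiring in addition that $\phi$ attain a strict maximum on $\supp q$ gives the same for $\nov \K {\Z^m} {-\phi}$. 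The characters failing this for some denominator occurring in a conjugate of an element of $S$ lie on finitely many hyperplanes $\{\phi(g_1) = \phi(g_2)\}$ with $g_1 \neq g_2$ in $\Z^m$; torsion-freeness of $\Z^m$ makes these proper hyperplanes, so their complement $U$ is open and dense, and taking all $G/\Z^m$-conjugates into account makes it conjugation- and antipodally invariant. Passing to the core $N$ of $\Z^m$ in $G$, which is again free abelian of finite index, yields the required $G_0 = N$.

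For the inductive step I would peel off the rational abelianisation. As $G$ is non-trivial, finitely generated and RFRS, $b_1(G) > 0$ (otherwise the rational abelianisation is trivial and no residual RFRS chain can exist), so there is a primitive character $\psi \colon G \to \Z$; let $K = \ker \psi$, which is RFRS though possibly infinitely generated. Writing each $x \in S$ as $p_x q_x^{-1}$ with $p_x, q_x$ twisted Laurent polynomials over $\D_{\K K}$ and expanding $q_x^{-1}$ as in the base case, the $t$-coefficients of the elements of $S$ all lie in the subring of $\D_{\K K}$ generated by the finitely many coefficients of the $p_x$ and $q_x$, the inverses of the leading coefficients of the $q_x$, and their $t$-conjugates. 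Its finite generating set lies in $\D_{\K K_0}$ for some finitely generated $K_0 \leq K$, since $\D_{\K K} = \varinjlim \D_{\K K_\alpha}$ over finitely generated $K_\alpha \leq K$. Applying the inductive hypothesis to $K_0$ and this finite set, I would then transport the conclusion back up: pass to a finite-index subgroup $G_0 \leq G$ in which the finitely many relevant group elements have non-zero image in rational homology — possible exactly because a residual RFRS chain makes any non-trivial element non-trivial in the rational abelianisation of some term — and remove finitely more hyperplanes so that, along every surviving character, the leading coefficients of the elements that must be inverted are monomials, hence units of the relevant group ring. That leading coefficients of sums, products and inverses of monomial-leading elements stay monomial for generic $\phi$ is the elementary observation keeping the number of hyperplanes finite.

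The main obstacle is precisely this transport step: a finite-index subgroup of $G$ does not restrict to a finite-index subgroup of the infinite-index normal subgroup $K$, so the inductive data over $K_0$ has to be spread out over $K$ compatibly with the conjugation action of $t$, and one is constantly trading finite-index passages — indispensable for producing the $G_0$ the theorem demands — against the infinite-index kernels that supply the Novikov coefficients (not only $K = \ker \psi$, but also the kernels of the transverse characters that actually populate $U$). Tracking leading terms through a rational expression of arbitrary depth while keeping the removed hyperplanes finite in number, and correctly inverting elements that live in the crossed product $\D_{\K G_0}\, G/G_0$ rather than in $\D_{\K G_0}$, are the remaining delicate points. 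The deliberately weak conclusion $\mathring{\overline U} \supseteq S(G)$ — rather than $\overline U = S(G_0)$ — is exactly what lets the argument tolerate that, after all these manipulations, $U$ is only guaranteed to behave well in a neighbourhood of the subspace $S(G) \subseteq S(G_0)$.
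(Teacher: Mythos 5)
Your proposal correctly identifies the ingredients — the crossed-product/Ore structure of $\D_{\K G}$ coming from Hughes-freeness, the geometric-series expansion that pushes fractions into Novikov rings, the removal of finitely many rational hyperplanes, and the reduction of the symmetry conditions in (1) to the unsymmetric case — and this is indeed the flavour of the arguments in \cite{Kielak2020a} and \cite{Jaikin-Zapirain2021} that the paper cites for this result without reproving it. But there is a genuine gap at the heart of the inductive step, and it is more fundamental than the ``transport step'' you flag as the main obstacle.

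You announce an induction ``descending the RFRS chain,'' but what the step actually does is pass to $K = \ker\psi$, an \emph{infinite}-index normal subgroup, and then to a finitely generated $K_0 \leqslant K$ containing the coefficients of the representations $x = p_x q_x^{-1}$. No inductive parameter has been identified that decreases from $G$ to $K_0$: the finitely generated RFRS group $K_0$ can be just as complicated as $G$ (it could a priori have larger first Betti number, deeper residual chain, etc.), so the recursion ``apply the inductive hypothesis to $K_0$'' is not well-founded as stated. The actual proofs in \cite{Kielak2020a}*{Theorem 4.13} and \cite{Jaikin-Zapirain2021}*{Theorem 5.10} circumvent this by inducting on a \emph{complexity} assigned to each element of $\D_{\K G}$ — roughly, the number of localisation steps along the fixed RFRS chain $G = G_0 \geqslant G_1 \geqslant \cdots$ needed to express it — and the crucial lemma is that the coefficients appearing in a twisted Laurent-fraction representation have strictly smaller complexity precisely because the RFRS chain provides a finite-index $G_{i+1}$ containing $\ker(G_i \to G_i^{\mathrm{fab}})$. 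Your argument never sets up this measure, so nothing forces the recursion to terminate. Until that parameter is in place, the subsequent difficulties you honestly list — spreading the data from $K_0$ over $K$ compatibly with the $t$-conjugation, and then re-entering a finite-index subgroup of $G$ — cannot even be attempted cleanly, because you don't know which finite-index subgroup of $G$ to descend to; the RFRS chain is what tells you.

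A secondary point: the claim that an RFRS group with $b_1(G)=0$ is trivial is correct, but the argument given (``otherwise no residual RFRS chain can exist'') deserves a line: if $G \neq \{1\}$ is RFRS and $\Q \otimes G^{\mathrm{ab}} = 0$, then $\ker(G \to \Q\otimes G^{\mathrm{ab}}) = G \leqslant G_1$, forcing $G_1 = G$, and by induction the chain is constant, contradicting residuality. This is a minor presentational issue, not a gap, but it is worth spelling out because it is exactly the RFRS property that is being used, and the same pattern (using the defining property of the chain to guarantee a finite-index descent) is what you would need to make the main induction rigorous.
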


The theorem is not stated this way anywhere. It is a slight weakening of \cite{Kielak2020a}*{Theorem 4.13} and \cite{Jaikin-Zapirain2021}*{Theorem 5.10}, see also \cites{Fisher2024,OkunSchreve2025}. 

The Virtual Algebraic Fibring Theorem is a direct consequence.

\begin{thm}[Virtual Algebraic Fibring Theorem, \cite{Fisher2024}*{Theorem 6.6}]
	\label{fibring thm}
Let $\K$ be a field and let $G$ be a virtually RFRS group of type $\typeFP_{n}(\K)$. The following are equivalent.
\begin{enumerate}
	\item There exists a finite-index subgroup $G_0$ in $G$ and an epimorphism $\phi \colon G_0 \to \Z$ such that $\ker \phi$ is of type $\typeFP_{n}(\K)$;
	\item $\betti{G}{i}{\K} = 0$ for all $i \leqslant n$.
\end{enumerate} 
\end{thm}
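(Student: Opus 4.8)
The plan is to derive the statement directly from the preceding fibring result (\cref{virt alg fibring}), together with the Generalised Sikorav Theorem (\cref{Sikorav}), the Mapping Torus Theorem (\cref{mapping torus}), the scaling lemma (\cref{betti nrs scaling}), and the chain-contraction dictionary. First I would note that both conditions are commensurability invariants -- for (1) because type $\typeFP_n(\K)$ is inherited by and detected on finite-index subgroups, and for (2) by \cref{betti nrs scaling} -- so we may assume that $G$ itself is RFRS and finitely generated (the case $n=0$ being trivial), and, passing to a finite-index subgroup from the RFRS chain if necessary, that $\beta_1(G)>0$; this last reduction is harmless because a RFRS group satisfying (2) is infinite.

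For the implication (1) $\Rightarrow$ (2): given a finite-index $G_0\leqslant G$ and an epimorphism $\phi\colon G_0\to\Z$ with $\ker\phi$ of type $\typeFP_n(\K)$, the Mapping Torus Theorem gives $\betti{G_0}{i}{\K}=0$ for all $i\leqslant n$, and then $\betti{G}{i}{\K}=0$ by the scaling lemma.

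For the substantial direction (2) $\Rightarrow$ (1), I would start from a free resolution $C_\bullet\to\K$ of the trivial $\K G$-module whose terms are finitely generated in degrees $\leqslant n$, which exists since $G$ is of type $\typeFP_n(\K)$. As $\D_{\K G}$ is a skew-field, the vanishing $\betti{G}{i}{\K}=0$ for $i\leqslant n$ amounts to $\homol_i(G;\D_{\K G})=0$ for $i\leqslant n$, so the complex $\D_{\K G}\otimes_{\K G}C_\bullet$ carries a partial chain contraction $s_0,\dots,s_n$ of length $n$; the maps $s_0,\dots,s_{n-1}$ live between finitely generated free modules and $s_n$ has finitely generated domain, hence image inside a finitely generated free summand, so the union of their matrix entries is a finite set $S\subseteq\D_{\K G}$. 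Applying \cref{virt alg fibring} to $S$ produces a finite-index normal $G_0\leqslant G$ and an open, conjugation- and antipodal-invariant $U\subseteq S(G_0)$ with $S(G)\subseteq\mathring{\overline U}$ and $S$ contained in $A\,G/G_0$, where $A:=\D_{\K G_0}\cap\nov\K{G_0}U$. Using the identification $\D_{\K G}\otimes_{\K G}C_\bullet\cong\D_{\K G_0}\otimes_{\K G_0}(C_\bullet|_{G_0})$ of complexes of $\D_{\K G_0}$-modules coming from $\D_{\K G}=\D_{\K G_0}\,G/G_0$, the conclusion of \cref{virt alg fibring} unwinds to the statement that all $\D_{\K G_0}$-matrix entries of $s_0,\dots,s_n$ lie in $A$ (the crossed-product structure constants lie in $\K G_0\subseteq A$, and $A$ is closed under the conjugation action of $G/G_0$ since $U$ is). Hence $s_0,\dots,s_n$ is a partial chain contraction of the complex of free $A$-modules $A\otimes_{\K G_0}(C_\bullet|_{G_0})$. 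Now $U$ is nonempty (since $\varnothing\neq S(G)\subseteq\overline U$) and open, so it contains a discrete character, which after rescaling we take to be an epimorphism $\phi\colon G_0\to\Z$; antipodal invariance gives $-\phi\in U$ too, and for every $\psi\in U$ one has $A\subseteq\D_{\K G_0}\cap\nov\K{G_0}\psi\subseteq\nov\K{G_0}\psi$. Base-changing the chain contraction along $A\hookrightarrow\nov\K{G_0}{\pm\phi}$ and reading the chain-contraction corollary in reverse gives $\homol_i(G_0;\nov\K{G_0}{\pm\phi})=0$ for all $i\leqslant n$; since $C_\bullet|_{G_0}$ is a free $\K G_0$-resolution of $\K$ with finitely generated terms in degrees $\leqslant n$, the group $G_0$ is of type $\typeFP_n(\K)$, and the Generalised Sikorav Theorem then yields that $\ker\phi$ is of type $\typeFP_n(\K)$, which is (1).

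The genuine difficulty is entirely packaged into \cref{virt alg fibring}, whose proof (going back to \cite{Kielak2020a} and \cite{Jaikin-Zapirain2021}) is the deep input: one must show that a prescribed finite subset of $\D_{\K G}$ can, after a controlled passage to a finite-index subgroup, be made to lie simultaneously in the Novikov rings associated to an open cone of characters. Granting that, the only delicate bookkeeping in the deduction above is the passage between the $\D_{\K G}$- and $\D_{\K G_0}$-module descriptions of the resolution and the verification that the chain contraction's matrix entries persist first inside $A$ and then inside $\nov\K{G_0}{\pm\phi}$; the rest is formal, relying only on the chain-contraction corollary, \cref{Sikorav}, \cref{mapping torus}, and \cref{betti nrs scaling}.
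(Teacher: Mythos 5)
Your argument is correct and is precisely the ``direct consequence'' of \cref{virt alg fibring} that the paper asserts without spelling out; it follows the same chain-contraction strategy the paper uses explicitly in \cref{prop:tuple_sigma} and \cref{l2 all chars}, specialised to $\mathcal H=\varnothing$ and a single field. Your handling of $s_n$ (noting its image lands in a finitely generated free summand of $\D_{\K G}\otimes_{\K G}C_{n+1}$) is a detail the paper glosses over but is exactly right, and the bookkeeping converting $\D_{\K G}$-matrix entries to $\D_{\K G_0}$-entries via the crossed product and conjugacy-invariance of $A$ matches the intended argument.
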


\section{Main tools}

\subsection{Novikov cohomology}

The following observation appeared already in \cite{Bridsonetal2025}.

\begin{prop}\label{prop:nov_homol_cohomol}
	Let $C_\bullet$ be a chain complex of free left $R G$-modules without negative terms. If $\homol_i (\nov R G \phi \otimes_{R G} C_\bullet) = 0$ for all $i \leqslant n$ then 
	\[
	\homol^i (\Hom_{RG}( C_\bullet, \nov R G {\phi})) = 0
	\]
	 for all $i \leqslant n$.
\end{prop}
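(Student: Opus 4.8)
The plan is to exploit the fact that $\nov R G \phi$ is not just an $RG$-module but an $RG$-bimodule, so that $\Hom_{RG}(C_\bullet, \nov R G \phi)$ carries a residual right (equivalently, after the usual twist, left) $RG$-module structure, and to relate this cochain complex to the chain complex $\nov R G \phi \otimes_{RG} C_\bullet$ via a duality pairing. Concretely, since each $C_i$ is a \emph{free} $RG$-module, say $C_i = \bigoplus_{j \in J_i} RG$, we have $\Hom_{RG}(C_i, \nov R G \phi) \cong \prod_{j \in J_i} \nov R G \phi$ and $\nov R G \phi \otimes_{RG} C_i \cong \bigoplus_{j \in J_i} \nov R G \phi$. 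The key structural observation is that the Novikov ring admits a natural pairing: for a single free module $RG$, the functor $\nov R G \phi \otimes_{RG} -$ and $\Hom_{RG}(-, \nov R G \phi)$ differ only by the passage from direct sum to direct product on the indexing set, together with an order-reversal of $\phi$.

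First I would make this precise: I claim there is a chain isomorphism (or at least a quasi-isomorphism) between $\Hom_{RG}(C_\bullet, \nov R G \phi)$ and a completion of $\nov R G {\phi} \otimes_{RG} C_\bullet$, or better, a relation to $\nov R G {-\phi} \otimes_{RG} C_\bullet$. The cleanest route is probably to use the ``$Q$-completion'' module $\prod_{i \in \Z}(R\ker\phi)t^i$ introduced in \cref{section Novikov}, which is an $RG$-bimodule containing $\nov R G \phi$, and observe that truncation gives, for each $i$, a decomposition relating $\Hom$ into $\nov R G \phi$ with the truncated pieces. The partial chain contraction $s_0, \dots, s_n$ over $\nov R G \phi$ supplied by the vanishing hypothesis (via the corollary to the chain-contraction lemma) consists of $\nov R G \phi$-module maps $s_i$; these are given by (possibly infinite, but $\phi$-locally finite) matrices over $\nov R G \phi$. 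The heart of the argument is to \emph{transpose} these matrices: the transpose of a $\phi$-locally-finite matrix over $\nov R G \phi$ should again define a bounded map, now on the $\Hom$-complex, because transposition interchanges ``each column has locally finite support'' with ``each row has locally finite support'', and for matrices with entries in a Novikov ring both conditions coincide with the finiteness built into the definition of the ring.

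The key steps, in order: (1) Using the corollary in the chain-contraction subsection, extract from the hypothesis $\homol_i(\nov R G \phi \otimes_{RG} C_\bullet) = 0$ ($i \le n$) a partial chain contraction $\{s_i\}_{i \le n}$ by free $\nov R G \phi$-modules of length $n$. (2) Observe each $s_i$ is represented by a column-finite matrix over $\nov R G \phi$ (column-finiteness is automatic since $s_i$ maps into a direct sum; the Novikov condition gives the $\phi$-boundedness of each entry). (3) Show that the transposed matrices $s_i^{\mathsf T}$ define $RG$-module homomorphisms $\Hom_{RG}(C_i, \nov R G \phi) \to \Hom_{RG}(C_{i+1}, \nov R G \phi)$ — here one checks that transposing a column-finite matrix over the Novikov ring yields a well-defined map on the product modules, using that the relevant sums remain $\phi$-locally finite. (4) Verify that $\{s_i^{\mathsf T}\}$ is a partial cochain contraction of length $n$ of $\Hom_{RG}(C_\bullet, \nov R G \phi)$, by transposing the identity $\id = \partial s + s\partial$ (transposition is contravariant, so $\partial \mapsto \delta$ and the composition order flips, exactly matching the cochain-contraction identity). (5) Conclude $\homol^i(\Hom_{RG}(C_\bullet, \nov R G \phi)) = 0$ for $i \le n$.

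The main obstacle I anticipate is step (3)–(4): checking that transposition genuinely sends the relevant class of matrices (those arising as chain contractions into free $\nov R G \phi$-modules) to maps that are well-defined on $\Hom_{RG}(C_i, \nov R G \phi) = \prod_{J_i} \nov R G \phi$ and land in the analogous product, and that the contraction identity is preserved. The subtlety is bookkeeping about supports: a priori a column-finite matrix need not be row-finite, so its transpose need not act on direct sums — but it \emph{does} act on direct products, which is exactly what $\Hom$ of a free module into a module gives, and one must confirm that the entrywise sums defining the action converge in the Novikov sense (each entry lies in $\nov R G \phi$, and the $\phi$-truncation of any result is a finite sum because only finitely many terms contribute below a given $\phi$-level). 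Once the support bookkeeping is organised — likely by truncating at each $\kappa \in \R$ and noting everything reduces to finite matrices over $RG$ — the transposition of the algebraic identity is formal. An alternative, perhaps cleaner, approach avoiding explicit matrices: identify $\Hom_{RG}(C_\bullet, \nov R G \phi)$ with $\Hom_{\nov R G \phi}(\nov R G \phi \otimes_{RG} C_\bullet, \nov R G \phi)$ (the free module being finitely generated is not needed degreewise here, only that $C_i$ is free), observe that a chain contraction of a complex of free modules is preserved by any additive functor — in particular by $\Hom_{\nov R G \phi}(-, \nov R G \phi)$ — so the contraction of $\nov R G \phi \otimes_{RG} C_\bullet$ dualises directly to a cochain contraction of its $\nov R G \phi$-linear dual, which is the cochain complex in question; this sidesteps all support issues, and is probably the argument the authors intend.
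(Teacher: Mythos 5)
Your ``alternative approach'' at the end is exactly the paper's argument: since $\nov R G \phi$ is a left $\nov R G \phi$-module, the extension-of-scalars adjunction identifies $\Hom_{RG}(C_i, \nov R G \phi)$ with $\Hom_{\nov R G \phi}(\nov R G \phi \otimes_{RG} C_i, \nov R G \phi)$, and the partial chain contraction of $\nov R G \phi \otimes_{RG} C_\bullet$ dualises under the contravariant functor $\Hom_{\nov R G \phi}(-,\nov R G \phi)$ to a partial cochain contraction of $\Hom_{RG}(C_\bullet, \nov R G \phi)$. In the paper this is unpacked on a single cocycle $z \colon C_i \to \nov R G \phi$: extend $z$ to the $\nov R G \phi$-linear map $\overline z$, compose with the contraction identity, and restrict back to $C_i$, exhibiting $z$ as a coboundary. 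Your main matrix-transposition route is also correct, but the obstacle you anticipated in steps (3)--(4) is illusory. Since $s_i \colon \bigoplus_{J_i}\nov R G \phi \to \bigoplus_{J_{i+1}}\nov R G \phi$ sends each basis vector into a \emph{direct sum}, its representing matrix $(a_{kj})$ is automatically column-finite, and the dual map
\[
\textstyle\prod_{J_{i+1}}\nov R G \phi \longrightarrow \prod_{J_i}\nov R G \phi,\qquad (y_k)_k \longmapsto \Bigl(\sum_k a_{kj} y_k\Bigr)_j,
\]
is already well-defined purely by column-finiteness: for each fixed $j$ the sum is \emph{finite}, so no Novikov-ring support estimates are needed. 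This is nothing more than the functoriality of $\Hom_S(-,S)$ on free $S$-modules, which is also what your ``cleaner'' variant invokes. In short, both your routes are the same argument, one phrased invariantly and one in coordinates, and the support bookkeeping you feared never arises.
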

\begin{proof}
	Let $C_\bullet = (C_i, \partial_i)$. The vanishing of $\homol_i (\nov R G \phi \otimes_{R G} C_\bullet)$ allows us to construct a partial chain contraction, that is $\nov R G \phi$-homomorphisms $s_i \colon \nov R G \phi \otimes_{R G} C_i \to \nov R G \phi \otimes_{R G} C_{i+1}$ for $i \leqslant n$ such that 
	\[
	\partial_i s_{i-1} + s_i \partial_{i+1} = \id_{\nov R G \phi \otimes_{R G} C_i}
	\]
	for $i \leqslant n$, with $s_{-1}=0$. (We are abusing notation slightly, since we use $\partial_i$ here to mean its natural extension  to $\nov R G \phi \otimes_{R G} C_i \to\nov R G \phi \otimes_{R G} C_{i-1} $.)
	
	Take $i \leqslant n$ and an $i$-cocycle $z \colon C_i \to \nov R G \phi$. Since $\nov R G \phi$ is a left $\nov R G \phi$-module, the cocycle $z$ extends to
	\[
	\overline z  \colon  \nov R G \phi \otimes_{R G} C_i \to  \nov R G \phi,  \ \xi \otimes c \mapsto \xi.z(c) .\]
	We get
	\[
	\overline z =  (\partial_i s_{i-1} + s_i \partial_{i+1}) \overline z =   \partial_i s_{i-1} \overline z,
	\]
	and restricting both sides to $C_i$ gives
	\[
	z = \partial_i  \circ (s_{i-1}  \overline z)\vert_{C_i},
	\] 
	showing that $z$ is a coboundary.
\end{proof}

Applying the above to $C_\bullet$ being a free resolution of $R$ yields the following.

\begin{cor}
	\label{flip}
	Let $G$ be a group and $\phi \colon G \to \R$ a homomorphism. If $\homol_i(G; \nov R G \phi)=0$ for all $i \leqslant n$, then $\homol^i(G; \nov R G {-\phi})=0$ for all $i \leqslant n$.
\end{cor}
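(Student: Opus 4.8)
The plan is to obtain \cref{flip} as an immediate application of \cref{prop:nov_homol_cohomol}, feeding in a free resolution of the trivial module; the only point requiring care is the bookkeeping of left/right module structures from \cref{sec modules}, which is exactly what produces the sign change in the character.

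First I would fix a resolution $C_\bullet \to R$ of the trivial $RG$-module by free left $RG$-modules (e.g.\ the bar resolution). By the convention recalled in \cref{sec modules}, the group homology $\homol_i(G; \nov R G \phi)$ is computed by twisting the \emph{left}-module structure of the bimodule $\nov R G \phi$ into a right-module structure via the anti-automorphism $g \mapsto g^{-1}$ and then tensoring $C_\bullet$ on the left. The key observation is that this twist carries $\nov R G \phi$ onto $\nov R G {-\phi}$: the bijection $x \mapsto (g \mapsto x(g^{-1}))$ sends the support $S$ of an element to $S^{-1}$, on which $\phi$ takes the values $-\phi(S)$, so the finiteness condition defining $\nov R G \phi$ becomes precisely the one defining $\nov R G {-\phi}$, and one checks that it intertwines the twisted right action with the natural right action. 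Consequently the hypothesis ``$\homol_i(G; \nov R G \phi)=0$ for all $i \leqslant n$'' says exactly that $\homol_i(\nov R G {-\phi} \otimes_{RG} C_\bullet) = 0$ for all $i \leqslant n$.

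Now I would apply \cref{prop:nov_homol_cohomol} with the character $-\phi$ in place of $\phi$ and with the resolution $C_\bullet$: its conclusion is that $\homol^i(\Hom_{RG}(C_\bullet, \nov R G {-\phi})) = 0$ for all $i \leqslant n$, and since the cohomology of $G$ with coefficients in the left $RG$-module $\nov R G {-\phi}$ is by definition the cohomology of this $\Hom$-complex, this is precisely $\homol^i(G; \nov R G {-\phi}) = 0$ for all $i \leqslant n$, as claimed. The only real obstacle here is conceptual rather than computational: one must be sure that the appearance of $-\phi$ is forced by the twist built into the definition of group homology, not an oversight; once that identification is in place, \cref{prop:nov_homol_cohomol} does all the work.
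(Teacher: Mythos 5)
Your proof is correct and follows exactly the paper's route: apply \cref{prop:nov_homol_cohomol} to a free resolution of the trivial module, with the sign change in the character accounted for by the twist $M \mapsto M^*$ built into the definition of group homology (the paper records the latter point in the remark immediately following \cref{flip}). Your verification that $(\nov R G \phi)^* \cong \nov R G {-\phi}$ as right $RG$-modules via $x \mapsto (g \mapsto x(g^{-1}))$ is the precise content the paper leaves implicit.
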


The change of coefficients from $\nov R G {\phi}$ to $\nov R G {-\phi}$ is an artifact of notation: as explained in \cref{sec modules}, the module $\nov R G {\phi}$ in $\homol_i(G; \nov R G {\phi})$ is a left  $R G$-module. But when computing homology, we tensor $C_\bullet$ on the left by the right module $(\nov R G {\phi})^*$, isomorphic to $\nov R G {-\phi}$ as a right $R G$-module.

Combining the above with \cref{Sikorav} yields the following.

\begin{cor}
	\label{Sikorav hom and cohom}
	Let $G$ be a group and $\phi \colon G \to \Z$ an epimorphism. The following are equivalent:
	\begin{enumerate}
		\item $\ker \phi$ is of type $\typeFP_n(R)$;
		\item $\homol^i(G; \nov R G {\pm \phi})=0 = \homol_i(G; \nov R G {\pm \phi})$ 	 for all $i \leqslant n$.
	\end{enumerate}
\end{cor}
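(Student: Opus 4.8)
The plan is to read the corollary off formally from the Generalised Sikorav's Theorem (\cref{Sikorav}) and \cref{flip}. The statement is to be understood with the finiteness hypothesis inherited from \cref{Sikorav}, namely that $G$ is of type $\typeFP_n(R)$; this is automatic whenever (1) holds, since then $G$ is an extension of $\Z$ — a group of type $\typeFP$ — by $\ker\phi$, and such extensions preserve type $\typeFP_n(R)$ (see \cite{Bieri1981}).

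The first step is the purely homological equivalence supplied by \cref{Sikorav}: under the standing hypothesis, $\ker\phi$ is of type $\typeFP_n(R)$ if and only if $\homol_i(G;\nov R G{\pm\phi}) = 0$ for all $i \leqslant n$. It therefore suffices to show that this homological vanishing is equivalent to the full condition (2), which in addition demands $\homol^i(G;\nov R G{\pm\phi}) = 0$ for all $i \leqslant n$. The implication from (2) to the homological vanishing is trivial, so the content is the converse. Assume $\homol_i(G;\nov R G{\pm\phi}) = 0$ for $i \leqslant n$. Applying \cref{flip} to the character $\phi$ converts the vanishing of $\homol_i(G;\nov R G\phi)$ into the vanishing of $\homol^i(G;\nov R G{-\phi})$; applying \cref{flip} a second time with $-\phi$ in place of $\phi$ converts the vanishing of $\homol_i(G;\nov R G{-\phi})$ into the vanishing of $\homol^i(G;\nov R G\phi)$. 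Combined with the assumed homological vanishing, this is precisely (2). Running these deductions in reverse, (2) gives back in particular the hypothesis of \cref{Sikorav}(2), hence (1).

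Since \cref{flip} is already established and \cref{Sikorav} is quoted, there is no serious obstacle here. The only points demanding a little care are the bookkeeping from \cref{sec modules} responsible for the switch between $\nov R G\phi$ and $\nov R G{-\phi}$ when passing between homology and cohomology — so that \cref{flip} must be invoked once for each of $\phi$ and $-\phi$ in order to recover both cohomological vanishings — and the observation that condition (1) on its own already furnishes the finiteness type of $G$ needed to invoke \cref{Sikorav}; some finiteness hypothesis on $G$ is genuinely required for the equivalence, and in every application in this paper $G$ is in fact of type $\typeFP(R)$.
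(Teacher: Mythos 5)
Your proof is correct and matches the paper's intended argument: combine Theorem~\ref{Sikorav} with Corollary~\ref{flip}, invoking the latter once with $\phi$ and once with $-\phi$ to obtain both cohomological vanishings. You are also right to flag that the finiteness hypothesis on $G$ from Theorem~\ref{Sikorav} must be carried along — it is left tacit in the corollary's statement, and, as you observe, it is supplied automatically by condition~(1) since an extension of $\Z$ by a group of type $\typeFP_n(R)$ is again of type $\typeFP_n(R)$.
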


\subsection{Long exact sequence}

We will now introduce the key tool.

\begin{prop}
	\label{les}
	Let $R$ be a ring. Let $G$ be a group with an epimorphism $\phi \colon G \to \Z$ with kernel $K$. Suppose that $\mathcal H$ is a (possibly empty) tuple of subgroups of $G$. The following long sequence of $R K$-modules is exact.
	\[
	\adjustbox{scale=1,center}{
		\begin{tikzcd}[column sep=tiny]
			\cdots \arrow[r]
			 & \homol^{n}(G,\mathcal H;  R G) \arrow[r] \arrow[d, phantom, ""{coordinate, name=ZZ}] 
			 & \homol^{n}(G,\mathcal H;  \nov R G {  \phi} \oplus \nov R G {  -\phi}) \arrow[r] 
			 & \homol^{n}(K,\mathcal H_\phi;  R K ) \arrow[dll,  rounded corners, to path={ -- ([xshift=2ex]\tikztostart.east)|- (ZZ) [near end]\tikztonodes-| ([xshift=-2ex]\tikztotarget.west)-- (\tikztotarget)}] \\
			& \homol^{n+1}(G,\mathcal H;  R G) \arrow[r] & \makebox[0pt][l]{$\cdots$} \phantom{\homol^{n}(G,\mathcal H;  \nov R G {  \phi} \oplus \nov R G {  -\phi})}
		\end{tikzcd}
	}
	\]
\end{prop}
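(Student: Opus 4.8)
The plan is to exhibit the stated long exact sequence as the long exact sequence in $\homol^\bullet(G,\mathcal H;-)$ coming from a short exact sequence of $RG$-bimodules, after identifying the third term. Write $\widehat{RG}$ for the $RG$-bimodule $\prod_{i\in\Z}(RK)t^i$ of \cref{section Novikov}, where $t\in G$ is a fixed preimage of the positive generator of $\Z$. The first step is to record the Mayer--Vietoris-type short exact sequence
\[
0 \to RG \xrightarrow{\,x\mapsto(x,x)\,} \nov R G {\phi} \oplus \nov R G {-\phi} \xrightarrow{\,(a,b)\mapsto a-b\,} \widehat{RG} \to 0
\]
of $RG$-bimodules. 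Injectivity of the first map is immediate; for surjectivity of the second, split a series $\sum_{i\in\Z}a_it^i\in\widehat{RG}$ into its parts over $i\geqslant 0$ and $i<0$, which lie respectively in $\nov R G {\phi}$ and $\nov R G {-\phi}$ (each of these truncations meets $\phi^{-1}((-\infty,\kappa])$, resp.\ $\phi^{-1}([\kappa,\infty))$, in a finite set); and exactness in the middle holds because, inside $\widehat{RG}$, the intersection $\nov R G {\phi}\cap\nov R G {-\phi}$ consists precisely of the series with support finite in the $\phi$-direction, i.e.\ of $RG$ (apply both truncation conditions with $\kappa=0$).

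Next I would apply the long exact sequence in $\homol^\bullet(G,\mathcal H;-)$ associated to this short exact sequence of coefficients (available as recalled in \cref{sec PD}). Since cohomology of pairs commutes with the finite direct sum, the middle term is $\homol^\bullet(G,\mathcal H;\nov R G {\phi})\oplus\homol^\bullet(G,\mathcal H;\nov R G {-\phi})$, exactly as in the statement, so it only remains to identify $\homol^i(G,\mathcal H;\widehat{RG})$ with $\homol^i(K,\mathcal H_\phi;RK)$ as $RK$-modules. For this I would use that $\widehat{RG}\cong\Hom_{RK}(RG,RK)$ as left $RG$-modules — that is, $\widehat{RG}$ is coinduced from the $RK$-module $RK$ — where $RG$ is viewed as a right $RK$-module via right multiplication, the basis $\{t^i\}_{i\in\Z}$ exhibiting $RG$ as a free right $RK$-module; this is a matter of unwinding the twisted Laurent-series description. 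Then, for any projective $RG$-resolution $P_\bullet\to\Delta_{G/\mathcal H}$, the Hom--tensor adjunction gives $\Hom_{RG}(P_\bullet,\widehat{RG})\cong\Hom_{RK}(P_\bullet,RK)$, the right-hand side computed after restricting $P_\bullet$ to $RK$. Because $RG$ is free over $RK$, this restriction is a projective $RK$-resolution of $\Delta_{G/\mathcal H}$ restricted to $RK$, which is $\Delta_{K/\mathcal H_\phi}$ by the remark preceding \cref{les for pairs}; taking cohomology yields $\homol^i(G,\mathcal H;\widehat{RG})\cong\operatorname{Ext}^{i-1}_{RK}(\Delta_{K/\mathcal H_\phi},RK)=\homol^i(K,\mathcal H_\phi;RK)$, naturally in the short exact sequence, so the identification is compatible with the connecting maps. (When $\mathcal H=\varnothing$ one argues identically with $R$ in place of $\Delta_{G/\mathcal H}$, recovering the ordinary Shapiro lemma; this also handles the peripheral term uniformly.) Combined with \cref{flip} this is exactly the input needed for \cref{main at infinity}.

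The step I expect to be the main obstacle is the bookkeeping of module structures rather than any deep point: one must check that $RG$, $\nov R G {\pm\phi}$ and $\widehat{RG}$ are genuine $RG$-bimodules, that the two maps in the short exact sequence respect both structures, and — most delicately — that the residual right-module structures descend so that every group in the long exact sequence is an $RK$-module and every map $RK$-linear, keeping in mind the convention of \cref{sec modules} whereby a right-module structure on the coefficients becomes, after twisting, a left-module structure on (co)homology. The identification $\widehat{RG}\cong\Hom_{RK}(RG,RK)$ likewise needs care with the conjugation twist by $t$: the subgroup $K$ is normal but the coset representatives $t^i$ do not centralise $K$, so the isomorphism is the evident one only up to this twist. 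None of this is conceptually hard, but it is where the argument must be written carefully; the rest is the standard Mayer--Vietoris/Shapiro package.
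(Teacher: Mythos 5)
Your proof is correct and takes essentially the same route as the paper: the identical short exact sequence $0 \to RG \to \nov R G \phi \oplus \nov R G {-\phi} \to \prod_{i\in\Z}(RK)t^i \to 0$, followed by the long exact sequence in $\operatorname{Ext}^\bullet_{RG}(\Delta_{G/\mathcal H},-)$ (respectively in $\homol^\bullet(G;-)$ when $\mathcal H = \varnothing$), with Shapiro's lemma used to rewrite the third term via $\Delta_{G/\mathcal H}\vert_{RK}\cong\Delta_{K/\mathcal H_\phi}$. The only difference is that you spell out the coinduction/Hom--tensor details behind Shapiro's lemma, which the paper cites without unwinding.
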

\begin{proof}
	Using the notation of \cref{section Novikov}, consider the short exact sequence of $RG$-bimodules
	\[
	0 \to RG \to \nov R G \phi \oplus \nov R G {-\phi} \to \prod_{i \in \Z} R K t^i \to 0,
	\] 
	where the second map is the diagonal embedding, and the third map is the difference of the embeddings. If $\mathcal H$ is empty, then an application of Shapiro's lemma shows that the desired long exact sequence is just the long exact sequence in cohomology associated to the short exact sequence above. If $\mathcal H$ is non-empty, then the long exact sequence in the statement of the theorem is just the long exact sequence in $\operatorname{Ext}_{RG}^\bullet(\Delta_{G/\mathcal H},-)$ associated to the short exact sequence above. This is because
	\[
		\operatorname{Ext}_{RG}^\bullet\left(\Delta_{G/\mathcal H},\prod_{i \in \Z} R K t^i\right) \ \cong \ \operatorname{Ext}_{RK}^\bullet(\Delta_{G/\mathcal H}, R K)
	\]
	by Shapiro's lemma, and since the restricted $RK$-module $\Delta_{G/\mathcal H}$ is just $\Delta_{K/\mathcal H_\phi}$, we have
	\[
		\operatorname{Ext}_{RK}^{n}(\Delta_{G/\mathcal H}, RK) \ \cong \ \homol^{n+1}(K,\mathcal H_\phi; RK),
	\]
	as desired. \qedhere
\end{proof}

\begin{rmk}\label{rmk: LES_cd}
	There is a more general long exact sequence that is useful to study the cohomological dimension of co-abelian subgroups, which was used in \cite{Fisher2024_cdKernels}. Let $(G,\mathcal H)$ be a group pair, let $\phi \colon G \rightarrow \R$ be a character with kernel $K$. Moreover, let $M$ be any left $RK$-module. The coinduced module $\prod_{t \in G/K} Mt$ has a pair of submodules $L^{\pm\phi}$ consisting of the sequences $(m_t t)$ for which there exists $\alpha \in \R$ such that $m_t = 0$ for all $\pm t < \pm\alpha$ (where the order is taken in $\R \supseteq G/K$). The inclusions $L^{\pm\phi} \hookrightarrow \prod_{t \in G/K} Mt$ induce a short exact sequence
	\[
		0 \rightarrow N \rightarrow L^\phi \oplus L^{-\phi} \rightarrow \prod_{t \in G/K} Mt \rightarrow 0
	\]
	with $N$ being the $RG$-module induced from $M$, 
	whence we obtain long exact sequences in $\homol^n(G;-)$ and $\homol^n(G,\mathcal H;-)$.
\end{rmk}

\subsection{Reduction to finitely many primes}

Suppose that $\mathcal P$ is a collection of primes. We denote by $\Z_\mathcal P$ the ring obtained by localising $\Z$ at the multiplicative set
\[
\left\{ \prod_{i=1}^m {p_i}^{n_i} \mid m, n_i, \in \N, p_i \in \mathcal P \right\}.
\]	 
A more pedestrian description of $\Z_\mathcal P$ is as the subring of $\Q$ whose elements can be expressed as fractions with denominators being products of powers of primes in $\mathcal P$. 

Given a matrix $M$ over $\novq{G}{\phi}$, we say that $M$ \emph{involves finitely many primes} if and only if there is a finite set of primes $\mathcal P$ such that the entries of $M$ are all in $\nov{\Z_{\mathcal P}}{G}{\phi}$.

\begin{prop}\label{prop:large_primes}
	Let $G$ be a group, let $\varphi \colon G \rightarrow \R$ be a non-trivial character, and let $C_\bullet$ be a chain complex  of finitely generated projective left $\Z G$-modules without negative terms. If $\homol_i(\novq{G}{\phi} \otimes_{\Z G} C_\bullet) = 0$ for all $i \leqslant n$, then there is a finite collection of primes $\mathcal P$ such that 
	\[
	\homol_i(\nov{\Z_\mathcal P}{G}{\phi} \otimes_{\Z G} C_\bullet) = 0
	\]
	for all $i \leqslant n$.
\end{prop}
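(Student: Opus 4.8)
The plan is to convert the vanishing of Novikov homology over $\Q$ into a partial chain contraction of $\novq{G}{\phi}\otimes_{\Z G}C_\bullet$ and then to ``clear denominators'' from it by truncation. By the chain-contraction lemma the hypothesis provides $\novq{G}{\phi}$-linear maps
\[
s_i\colon \novq{G}{\phi}\otimes_{\Z G}C_i \longrightarrow \novq{G}{\phi}\otimes_{\Z G}C_{i+1}\qquad(0\leqslant i\leqslant n),
\]
with $s_{-1}=0$ and $\partial_{i+1}s_i + s_{i-1}\partial_i = \id$ for $0\leqslant i\leqslant n$. I would represent these maps and the $\partial_i$ by matrices (immediate when the $C_i$ are free; in general, first embed each $C_i$ as a direct summand of a finitely generated free $\Z G$-module, extend the maps by zero, and conjugate back by the defining idempotents at the end). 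The obstruction to concluding immediately is that a single entry of some $s_i$ is a twisted power series which may acquire new primes in its denominators arbitrarily far out along its $\phi$-support.

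To repair this I would truncate. Let $B$ be the largest value of $|\phi|$ occurring on the supports of the entries of $\partial_1,\dots,\partial_{n+1}$ (take $B=0$ if those entries all vanish), fix a real number $\kappa>B$, and let $s_i'$ be the entrywise truncation of $s_i$ at $\kappa$. Each $s_i'$ has finite support, so only finitely many primes divide the denominators of its coefficients; let $\mathcal P$ be the resulting finite set of primes for $s_0',\dots,s_n'$, so that every $s_i'$ is a matrix over $\Z_{\mathcal P}G$. Put
\[
E_i := \id - \partial_{i+1}s_i' - s_{i-1}'\partial_i\qquad(0\leqslant i\leqslant n).
\]
The crux is that $E_i$ has two complementary descriptions. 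Read straight off the display, $E_i$ is a matrix over $\Z_{\mathcal P}G$. On the other hand, subtracting the relation $\partial_{i+1}s_i + s_{i-1}\partial_i=\id$ gives $E_i = \partial_{i+1}r_i + r_{i-1}\partial_i$ with $r_j := s_j-s_j'$ supported, entrywise, in $\phi^{-1}((\kappa,\infty))$; since each $\partial_j$ is supported in $\phi^{-1}([-B,B])$, every entry of $E_i$ is supported in $\phi^{-1}((\kappa-B,\infty))$. Thus $E_i$ has $\phi$-support bounded below by the fixed positive constant $\delta := \kappa-B$.

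It then remains to correct the truncated maps. Since $E_i$ is a matrix over $\Z_{\mathcal P}G$ whose $\phi$-support is bounded below by $\delta>0$, the matrix $E_i^k$ has $\phi$-support bounded below by $k\delta$, so $u_i := \sum_{k\geqslant 0}E_i^k$ converges to a matrix over $\nov{\Z_{\mathcal P}}{G}{\phi}$ that is a two-sided inverse of $\id-E_i$. A short calculation using $\partial_j\partial_{j+1}=0$ yields $\partial_iE_i=E_{i-1}\partial_i$ for $1\leqslant i\leqslant n$, hence $\partial_iu_i=u_{i-1}\partial_i$. Setting $\sigma_i := s_i'u_i$ for $0\leqslant i\leqslant n$ (and $\sigma_{-1}=0$), the identities $\partial_{i+1}s_i'=\id-E_i-s_{i-1}'\partial_i$, $(\id-E_i)u_i=\id$ and $\partial_iu_i=u_{i-1}\partial_i$ give
\[
\partial_{i+1}\sigma_i + \sigma_{i-1}\partial_i = (\id-E_i-s_{i-1}'\partial_i)u_i + s_{i-1}'u_{i-1}\partial_i = \id
\]
for all $0\leqslant i\leqslant n$. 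Hence $\sigma_0,\dots,\sigma_n$ is a partial chain contraction of length $n$ of $\nov{\Z_{\mathcal P}}{G}{\phi}\otimes_{\Z G}C_\bullet$, and the chain-contraction lemma gives $\homol_i(\nov{\Z_{\mathcal P}}{G}{\phi}\otimes_{\Z G}C_\bullet)=0$ for all $i\leqslant n$.

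The step I expect to be the main obstacle is the bookkeeping around the error $E_i$: one formula exhibits it as living over $\Z_{\mathcal P}G$ (so that nothing ever leaves $\Z_{\mathcal P}$), a different formula exhibits it as having strictly positive $\phi$-support with a uniform gap $\delta$ (so that $\id-E_i$ is invertible inside the Novikov ring), and both must hold simultaneously. It matters that $\kappa$ is taken strictly above the bound $B$ coming from the differentials — not merely positive — for otherwise the gap $\delta$ need not be positive and $\sum_kE_i^k$ need not converge; and it matters that the correction is applied on the source side, $\sigma_i=s_i'(\id-E_i)^{-1}$, so that no inverse $u_{n+1}$ is ever required and the top degree $i=n$ presents no difficulty.
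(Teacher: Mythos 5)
Your proof is correct and follows essentially the same route as the paper: truncate the $\Q$-Novikov chain contraction to finitely supported matrices, observe that the resulting error is simultaneously a matrix over $\Z_{\mathcal P}G$ and has strictly positive $\phi$-support, and absorb the error by a Neumann series inside $\nov{\Z_{\mathcal P}}{G}{\phi}$. The only organizational difference is that you truncate and repair all degrees at once (using the commutation $\partial_i E_i = E_{i-1}\partial_i$, hence $\partial_i u_i = u_{i-1}\partial_i$, to verify the corrected maps form a contraction), whereas the paper proceeds inductively one degree at a time, truncating only the freshly chosen $\sigma_i$ at stage $i$ and setting $s_i = (\id - A)^{-1}\overline{\sigma_i}$ with the observation $A\partial_i = 0$; the two arguments are substantively identical.
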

\begin{proof}
	By taking direct sums with projective modules, we can modify the chain complex (without changing its homology over arbitrary coefficients) so that it consists of finitely generated free modules. We fix bases for the free modules and identify the boundary maps with matrices over the appropriate rings. We will inductively define chain contractions 
	\[
	s_i \colon \novq{G}{\phi} \otimes_{\Z G} C_i \rightarrow \novq{G}{\phi} \otimes_{\Z G} C_{i+1}
	\]
	for $i = 0, \dots, n$ involving finitely many primes.

	We set $s_{-1}$ to be the zero map, which trivially involves finitely many primes. For some $0 \leqslant i \leqslant n$, suppose we have defined chain contractions $s_{0}, \dots, s_{i-1}$ involving finitely many primes. By acyclicity of $\novq{G}{\phi} \otimes_{\Z G} C_\bullet$, there is a chain contraction 
	\[
	\sigma_i \colon \novq{G}{\phi} \otimes_{\Z G} C_i \rightarrow \novq{G}{\phi} \otimes_{\Z G} C_{i+1},
	\]
	such that $\partial_i s_{i-1} + \sigma_i \partial_{i+1} = \id$. We will modify $\sigma_i$ to obtain a new chain contraction involving only finitely many primes.  We form $\overline{\sigma_i}$ by truncating the entries of $\sigma_i$ at a  real number sufficiently large so that 
	\[
	A =  \id -\partial_i s_{i-1} - \overline{\sigma_i} \partial_{i+1} 
	\]
	has all entries of positive support. We have $A = (\sigma_i - \overline{\sigma_i}) \partial_{i+1}$, and $\sigma_i - \overline{\sigma_i}$ is a matrix over $\Q G$, so the matrix $A$ involves finitely many primes. Since the supports of the entries of $A$ are positive, $\id - A$ is invertible with inverse $\sum_{i=0}^\infty A^i$. Note that 
	\[
	A \partial_i = (\sigma_i-\overline{\sigma_i}) \partial_{i+1} \partial_i = 0,
	\]
	so 
	\[
	\id = (\id-A)^{-1} (\partial_i s_{i-1} +  \overline{\sigma_i} \partial_{i+1} )= \partial_i s_{i-1} + (\id-A)^{-1} \overline{\sigma_i} \partial_{i+1}.
	\]
	Thus, we put $s_i = (\id - A)^{-1} \overline{\sigma_i}$, and observe that $s_i$ involves only finitely many primes. Letting $\mathcal P$ be the collection of primes involved in the chain contractions $s_{0}, \dots, s_n$, we conclude that 
	\[
		\homol_i(\nov{\Z_\mathcal P}{G}{\phi} \otimes_{\Z G} C_\bullet) = 0
	\]
	for $i \leqslant n$. \qedhere
\end{proof}

\begin{cor}\label{cor:large_primes}
	Let $G$ be a group of type $\typeFP_n$ and let $\phi \colon G \rightarrow \R$ be a non-trivial homomorphism.
	\begin{enumerate}
		\item\label{item:large_primes_homol} If $\homol_i(G;\novq{G}{\phi}) = 0$ for $i \leqslant n$, then there is a finite collection of primes $\mathcal P$ such that $\homol_i(G;\nov{\Z_\mathcal P}{G}{\phi}) = 0$.
		\item\label{item:large_primes_cohomol} If $\operatorname{cd}(G) = n$ and there is an integer $0 \leqslant j \leqslant n$ such that 
		\[
		\homol^i(G;\novq{G}{\phi}) = 0
		\]
		for $j \leqslant i \leqslant n$, then there is a finite collection of primes $\mathcal P$ such that $\homol^i(G;\nov{\Z_\mathcal P}{G}{\phi}) = 0$ for $j \leqslant i \leqslant n$.
	\end{enumerate}
\end{cor}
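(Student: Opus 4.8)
The plan is to deduce both statements from \cref{prop:large_primes}, applied to a suitably chosen chain complex of finitely generated projective $\Z G$-modules; the only extra ingredient is the translation between group (co)homology with Novikov coefficients and the homology of a tensored complex. For a projective resolution $C_\bullet\to\Z$ of the trivial $\Z G$-module, the conventions of \cref{sec modules} give $\homol_i(G;\novq{G}{\phi})\cong\homol_i\!\left(\nov{\Q}{G}{-\phi}\otimes_{\Z G} C_\bullet\right)$ for all $i$, since $(\novq{G}{\phi})^{*}\cong\nov{\Q}{G}{-\phi}$ as right $\Z G$-modules (as recorded immediately after \cref{flip}); the analogous statement holds with $\Z_\mathcal P$ in place of $\Q$. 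So the point is to feed \cref{prop:large_primes} the correct complex and then apply it with the character $-\phi$ in place of $\phi$.

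For part (1): since $G$ is of type $\typeFP_n$, fix a free resolution $C_\bullet\to\Z$ with $C_0,\dots,C_n$ finitely generated. I would first observe that the proof of \cref{prop:large_primes} goes through under the weaker assumption that only $C_0,\dots,C_n$ are finitely generated: the inductive construction there produces chain contractions $s_0,\dots,s_n$, and the last one, $s_n\colon\novq{G}{\phi}\otimes_{\Z G} C_n\to\novq{G}{\phi}\otimes_{\Z G} C_{n+1}$, is a map out of a finitely generated module into a direct sum, so its image lies in a finitely generated free direct summand of $\novq{G}{\phi}\otimes_{\Z G} C_{n+1}$, and the truncation argument only ever sees that summand and the corresponding finite block of $\partial_{n+1}$. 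Granting this, \cref{prop:large_primes} (with character $-\phi$) applied to $C_\bullet$ turns $\homol_i(G;\novq{G}{\phi})=0$ for $i\le n$ into $\homol_i\!\left(\nov{\Z_\mathcal P}{G}{-\phi}\otimes_{\Z G} C_\bullet\right)=0$ for $i\le n$ and some finite set of primes $\mathcal P$, which is exactly $\homol_i(G;\nov{\Z_\mathcal P}{G}{\phi})=0$ for $i\le n$.

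For part (2): the hypotheses $\operatorname{cd}(G)=n$ and $G$ of type $\typeFP_n$ force $G$ to be of type $\typeFP$ over $\Z$ (\cite{Bieri1981}), so there is a finite resolution $0\to C_n\to\cdots\to C_0\to\Z\to 0$ by finitely generated projective $\Z G$-modules. Dualising over $\Z G$ and reindexing, set $D_k=\Hom_{\Z G}(C_{n-k},\Z G)$; then $D_\bullet$ is a finite chain complex of finitely generated projective $\Z G$-modules concentrated in degrees $0,\dots,n$, and since each $C_i$ is finitely generated projective we have $\homol_k(D_\bullet\otimes_{\Z G} M)\cong\homol^{\,n-k}(G;M)$, naturally in the $\Z G$-module $M$. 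Thus the hypothesis that $\homol^i(G;\novq{G}{\phi})=0$ for $j\le i\le n$ is, after passing to left modules via \cref{sec modules}, a vanishing statement of exactly the shape required by \cref{prop:large_primes} for the complex $D_\bullet$ and the character $-\phi$, in the degree range $0\le k\le n-j$. Applying \cref{prop:large_primes} and then undoing the reindexing yields a finite set of primes $\mathcal P$ with $\homol^i(G;\nov{\Z_\mathcal P}{G}{\phi})=0$ for $j\le i\le n$.

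The substance of the argument is entirely in \cref{prop:large_primes}; in assembling the corollary the only delicate points are the routine left/right-module bookkeeping (handled uniformly by \cref{sec modules}) and, in part (1), the small extension of \cref{prop:large_primes} to complexes that are finitely generated only up to degree $n$ — which is in any case dispensable for all the applications in this paper, where $G$ is always of type $\typeFP$.
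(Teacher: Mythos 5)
Your argument is correct and follows essentially the same route as the paper: item (2) is handled by dualising a finite projective resolution, reindexing so that the dual complex sits in nonnegative degrees, and applying \cref{prop:large_primes}; item (1) is a direct application of \cref{prop:large_primes} to a resolution of $\Z$. The left/right-module bookkeeping you carry out (yielding the character $-\phi$ in the application of the proposition) is correct, though since the conclusion holds for all non-trivial characters it is immaterial to the statement.

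The one place where you go beyond the paper is the observation that when $G$ is merely of type $\typeFP_n$, the resolution need not be finitely generated past degree $n$, so \cref{prop:large_primes} as stated does not apply literally; the paper's ``immediate consequence'' glosses over this. Your extension is valid: for $i\leqslant n$ every map in the inductive construction is a matrix out of the finitely generated module $\nov{\Q}{G}{\phi}\otimes_{\Z G}C_i$, hence has only finitely many non-zero entries, and the truncation and inversion steps only see the corresponding finite block of $\partial_{i+1}$. A small imprecision: the object whose image must be shown to lie in a finitely generated free summand of $\nov{\Q}{G}{\phi}\otimes_{\Z G}C_{n+1}$ is the abstract contraction $\sigma_n$ (the one produced by acyclicity and then truncated), not the final modified contraction $s_n$; the same argument applies, but it is $\sigma_n$ that drives the truncation step. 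As you note, the issue is moot for the paper's applications, where $G$ is always of type $\typeFP$.
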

\begin{proof}
	Item (\ref{item:large_primes_homol}) is an immediate consequence of \cref{prop:large_primes}. In the situation of item (\ref{item:large_primes_cohomol}), there is a resolution $0 \rightarrow P_n \rightarrow \dots \rightarrow P_0 \rightarrow \Z \rightarrow 0$ by finitely generated projective $\Z G$-modules. The Novikov cohomology of $G$ is
	\[
	\homol^i(G;\nov{R}{G}{\phi}) = \homol^i(\Hom_{\Z G}(P_\bullet,\Z G) \otimes_{\Z G} \nov{R}{G}{\phi})
	\]
	for any ring $R$, because the modules $P_i$ are finitely generated. The result then follows by applying \cref{prop:large_primes} to the chain complex
	\[
	\cdots \rightarrow \Hom_{\Z G}(P_{n-1},\Z G) \rightarrow \Hom_{\Z G}(P_n,\Z G) \rightarrow 0
	\]
	of finitely generated projective $\Z G$-modules. \qedhere
\end{proof}

\section{Acyclicity at infinity}

We will now apply the tools from the previous section to study homology at infinity of algebraic fibres.

\begin{thm}\label{thm:cohomol_fibre}
	Let $R$ be a ring.
	Let $G$ be a group and $\phi \colon G\to \Z$ an epimorphism. If $K = \ker \phi$ is of type $\typeFP_n(R)$ then 
	\[
	\homol^{i+1}(G;RG) \cong \homol^i(K; R K)
	\]
	as $R K$-modules for all $i < n$ and $\homol^n(K; R K)$ is isomorphic as an $R K$-module to a submodule of $\homol^{n+1}(G;RG)$.
\end{thm}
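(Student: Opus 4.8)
The plan is to read both statements off the long exact sequence of \cref{les}, once we know that the Novikov cohomology of $G$ vanishes in the relevant range. Since $K=\ker\phi$ is of type $\typeFP_n(R)$ by hypothesis (and hence so is $G$, being an extension of $\Z$ by $K$), \cref{Sikorav hom and cohom} applies and gives
\[
\homol^i(G;\nov R G \phi)=0=\homol^i(G;\nov R G {-\phi})\qquad\text{for all }i\leqslant n.
\]
Since cohomology commutes with finite direct sums of coefficient modules, this also gives $\homol^i\bigl(G;\nov R G \phi\oplus\nov R G {-\phi}\bigr)=0$ for all $i\leqslant n$.

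Next I would feed this into \cref{les} with the empty peripheral tuple $\mathcal H=\varnothing$; then $\mathcal H_\phi=\varnothing$ and every relative cohomology group in the sequence collapses to the absolute cohomology of $G$ or of $K$. The portion of the long exact sequence around the term $\homol^i(K;RK)$ reads
\[
\homol^i\bigl(G;\nov R G \phi\oplus\nov R G {-\phi}\bigr)\to\homol^i(K;RK)\xrightarrow{\ \delta\ }\homol^{i+1}(G;RG)\to\homol^{i+1}\bigl(G;\nov R G \phi\oplus\nov R G {-\phi}\bigr),
\]
and it is a sequence of $RK$-modules and $RK$-linear maps. For $i<n$ both flanking terms vanish by the previous paragraph (as $i\leqslant n$ and $i+1\leqslant n$), so $\delta$ is an isomorphism of $RK$-modules, giving the first assertion. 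For $i=n$ only the left-hand flanking term is known to vanish, so $\delta\colon\homol^n(K;RK)\to\homol^{n+1}(G;RG)$ is injective, which exhibits $\homol^n(K;RK)$ as an $RK$-submodule of $\homol^{n+1}(G;RG)$ and finishes the argument.

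I do not expect a genuine obstacle here: the real content sits inside \cref{les} (the short exact sequence $0\to RG\to\nov R G \phi\oplus\nov R G {-\phi}\to\prod_{i\in\Z}RK\,t^i\to0$ of $RG$-bimodules, together with the Shapiro-lemma identifications) and inside \cref{Sikorav hom and cohom} (which rests on the generalised Sikorav theorem and the homology-to-cohomology passage of \cref{flip}). The two things that need a little care are that $G$ does inherit type $\typeFP_n(R)$ from $K$ --- so that the Sikorav-type statement applies --- and that one tracks the $RK$-module structure throughout, so that the closing isomorphism and injection are of $RK$-modules rather than just of abelian groups; both points are already built into the statements we are allowed to quote.
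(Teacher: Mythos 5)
Your proof is correct and takes exactly the same route as the paper: it invokes \cref{Sikorav hom and cohom} to kill the Novikov cohomology terms and then reads the isomorphism and the injection off the long exact sequence of \cref{les} with $\mathcal H = \varnothing$. You spell out the details more carefully than the paper (which simply states that the result follows immediately from these two results), including the useful observation that $G$ inherits type $\typeFP_n(R)$ from $K$.
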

\begin{proof}
	This follows immediately from \cref{Sikorav hom and cohom,les}, taking $\mathcal H = \varnothing$ and  noting that 
	\[\homol^{i}(G; \nov R G {  \phi} \oplus \nov R G {  -\phi}) = \homol^{i}(G; \nov R G {  \phi}) \oplus \homol^{i}(G;\nov R G {  -\phi}). \qedhere\]
\end{proof}

\begin{rmk}
	While we find the approach using Novikov rings more straightforward, there is also a spectral sequence argument, which we sketch here. The Lyndon--Hochschild--Serre spectral sequence associated to the extension $\mathbbm 1 \rightarrow K \rightarrow G \rightarrow \Z \rightarrow \mathbbm 1$ is
	\[
		E_2^{p,q} = \homol^p(\Z; \homol^q(K; RG)) \Rightarrow \homol^{p+q}(G;RG).
	\]
	Because $K$ is of type $\typeFP_n(R)$, we have
	\[
		\homol^q(K;RG) \cong \homol^q(K; \bigoplus_{i\in\Z} RKt^i) \cong \bigoplus_{i\in \Z} \homol^q(K; RK) t^i
	\]
	for all $q \leqslant n$.
	Thus,
	\[
		\homol^0(\Z; \homol^q(K; RG)) = 0 \quad \text{and} \quad \homol^1(\Z; \homol^q(K;RG)) \cong \homol^q(K;RK)
	\]
	for $0 \leqslant q \leqslant n$ by an easy cohomology computation. The page $E_2^{p,q}$ is concentrated in the columns $0$ and $1$, so it follows immediately that $\homol^q(K;RK) \cong \homol^{q+1}(G;RG)$ for $q < n$ and that \[\homol^n(K;RK) \leqslant \homol^{n+1}(G;RG).\]
\end{rmk}

\begin{cor}\label{cor:H2zero_oneEndFibre}
	Let $\K$ be a field.
	Let $G$ be a group with $\homol^{2}(G;\K G) = 0$. If $\phi \colon G \to \Z$ is an epimorphism with finitely generated kernel, then $\ker \phi$ has at most one end.
\end{cor}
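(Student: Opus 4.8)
The goal is to prove \cref{cor:H2zero_oneEndFibre}: if $G$ is a group with $\homol^2(G;\K G) = 0$ and $\phi \colon G \to \Z$ is an epimorphism with finitely generated kernel $K = \ker\phi$, then $K$ has at most one end.

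The plan is to apply \cref{thm:cohomol_fibre} with $R = \K$ and $n = 1$. Since $K$ is finitely generated, it is of type $\typeFP_1(\K)$ (a finitely generated group is always of type $\typeFP_1$ over any ring), so the hypotheses of \cref{thm:cohomol_fibre} are met with $n = 1$. The theorem then gives two pieces of information: first, $\homol^{i+1}(G;\K G) \cong \homol^i(K;\K K)$ as $\K K$-modules for all $i < 1$, i.e.\ for $i = 0$; and second, $\homol^1(K;\K K)$ is isomorphic as a $\K K$-module to a submodule of $\homol^2(G;\K G)$. It is the second statement that does the work here. Since $\homol^2(G;\K G) = 0$ by hypothesis, its submodule $\homol^1(K;\K K)$ must also vanish: $\homol^1(K;\K K) = 0$.

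It remains to translate the vanishing of $\homol^1(K;\K K)$ into a statement about ends. Recall that for a finitely generated group $K$, the number of ends $e(K) \in \{0,1,2,\infty\}$ is governed by $\homol^1(K;\Z K)$ (or equivalently by $\homol^0$ of the space of ends): $e(K) \leqslant 1$ if and only if $\homol^1(K;\Z K) = 0$, while $e(K) = 2$ corresponds to $\homol^1(K;\Z K) \cong \Z$ and $e(K) = \infty$ corresponds to $\homol^1(K;\Z K)$ being infinitely generated as an abelian group. The same dichotomy holds over an arbitrary field $\K$: a finitely generated group $K$ has at most one end if and only if $\homol^1(K;\K K) = 0$ (when $K$ is infinite this is the reduced cohomology $\widetilde{\homol}^0$ of the end space, and when $K$ is finite both sides are trivially zero in the relevant sense). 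Indeed, $\homol^1(K; \K K)$ is nonzero precisely when $K$ has more than one end, since the space of ends is nonempty and disconnected exactly in that case. Hence $\homol^1(K;\K K) = 0$ forces $e(K) \leqslant 1$.

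The only genuinely delicate point is the final translation step, and even that is standard: one must invoke the characterisation of the number of ends of a finitely generated group via $\homol^1(-;\K-)$ with field coefficients rather than the more commonly cited $\Z$ coefficients. This follows from the same argument as in the integral case (see \cite{Brown1982}*{Section VIII.6} or the standard references on ends of groups), using that $\homol^1(K;\K K)$ computes $\widetilde{\homol}^0$ of the end space of $K$ with $\K$ coefficients, and that a nonempty compact totally disconnected space has nontrivial reduced $0$-th cohomology over any field as soon as it has more than one point. Everything else in the proof is a direct citation of \cref{thm:cohomol_fibre}, so there is no real obstacle to overcome.
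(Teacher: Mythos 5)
Your proof is correct and follows essentially the same route as the paper: apply \cref{thm:cohomol_fibre} with $n=1$ to get $\homol^1(K;\K K)=0$, and then deduce that $K$ has at most one end. The only difference is in how the final step is justified. The paper makes this precise by first invoking Swan's Lemma~3.5 (which says that the vanishing of $\homol^1(K;\K K)$ for one field $\K$ implies it for all fields, in particular $\F_2$) and then appealing to Specker's theorem that $\homol^1(K;\F_2 K)=0$ for a finitely generated $K$ forces at most one end. You instead assert directly that over any field, $\homol^1(K;\K K)=0$ characterises having at most one end, and sketch a justification via the end space having nontrivial reduced $\check{H}^0$ over any field when it has more than one point; this is true, but the reference to Brown~VIII.6 is imprecise, since Brown's treatment of ends is essentially over $\Z$ and $\F_2$. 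A clean way to close the gap, if you do not want to go through the end space, is exactly the paper's route: Swan's lemma reduces to $\F_2$, where Specker's theorem applies. Either way, the mathematical content is the same, and your proof is sound.
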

\begin{proof}
	Since $\ker \phi$ is finitely generated, it is of type $\typeFP_1(\K)$. Hence $\homol^1(\ker \phi; \K \ker \phi) = 0$.
	By \cite{Swan1969}*{Lemma 3.5}, this statement being true for one field implies  it being true for all fields, and in particular for the field of two elements $\F_2$. Finally, $\homol^{1}(\ker \phi;\F_2 \ker \phi) = 0$ implies that $\ker \phi$ has at most one end -- this is  due to Specker~\cite{Specker1949}. 
\end{proof}

\begin{maincor}[\ref{one-ended fibr}]
	Let $M$ be a closed aspherical manifold of dimension at least three. If $\phi \colon \pi_1(M) \to \Z$ is an epimorphism with finitely generated kernel, then the kernel has only one end. 
\end{maincor}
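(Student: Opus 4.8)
The plan is to reduce the statement to \cref{cor:H2zero_oneEndFibre} by exploiting the Poincaré duality enjoyed by $G = \pi_1(M)$. First I would record the classical fact (due to Bieri \cite{Bieri1972} and Wall \cite{Wall1967}) that the fundamental group $G$ of a closed aspherical manifold of dimension $n = \dim M$ is a $\PD^n_\Z$-group, possibly non-orientable; in particular $\homol^i(G;\Z G) = 0$ for all $i \neq n$ and $G$ is of type $\typeFP(\Z)$. Since $G$ is of type $\typeFP(\Z)$ and $\Q$ is flat over $\Z$, it follows that $\homol^i(G;\Q G) \cong \homol^i(G;\Z G)\otimes_\Z\Q$, which vanishes for every $i \neq n$; because $n \geqslant 3$ this yields in particular
\[
\homol^2(G;\Q G) = 0.
\]

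With this in hand I would apply \cref{cor:H2zero_oneEndFibre} with $\K = \Q$: since $\ker\phi$ is finitely generated and $\homol^2(G;\Q G) = 0$, the kernel $K = \ker\phi$ has at most one end. To upgrade ``at most one'' to ``exactly one'', I would observe that $K$ cannot be finite: as a $\PD^n_\Z$-group, $G$ has $\mathrm{cd}_\Z(G) = n < \infty$ and is therefore torsion-free, so a finite normal subgroup of $G$ is trivial; were $K$ finite we would get $K = \mathbbm{1}$, hence $\phi$ an isomorphism $G \cong \Z$, forcing $n = \mathrm{cd}_\Z(G) = 1$, contrary to $\dim M \geqslant 3$. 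An infinite finitely generated group with at most one end has exactly one end, which completes the argument.

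I do not expect any genuine obstacle here: the whole argument is formal once \cref{cor:H2zero_oneEndFibre} is available, and the only external ingredient is the standard identification of the fundamental group of a closed aspherical $n$-manifold as a $\PD^n_\Z$-group — and even there only the vanishing $\homol^i(G;\Q G) = 0$ for $i \neq n$ is used, so orientability plays no role. The hypothesis $\dim M \geqslant 3$ enters precisely to guarantee $\homol^2(G;\Q G) = 0$; for $n = 2$ it genuinely fails (take $G = \Z^2$ with $\phi$ a coordinate projection, so $K = \Z$ has two ends), so the dimension bound cannot be relaxed.
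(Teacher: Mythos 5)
Your proof is correct and takes essentially the same approach as the paper: both reduce to \cref{cor:H2zero_oneEndFibre} by establishing $\homol^2(G;\Q G)=0$ and then rule out the finite kernel case by torsion-freeness of $\PD$-groups. The only cosmetic difference is how the vanishing is obtained — you read $\homol^2(G;\Z G)=0$ off the defining property of a $\PD^n_\Z$-group and tensor with $\Q$, while the paper applies Poincar\'e duality together with Shapiro's lemma to get $\homol^2(G;\Q G)\cong\homol_{n-2}(G;\Q G)=0$; both routes are immediate.
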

\begin{proof}
	The fact that $\homol^{2}(\pi_1(M);\Q \pi_1(M)) = 0$ follows directly from Poincar\'e duality and the fact that   $\homol_i(\pi_1(M);\Q \pi_1(M)) = 0$ for all $i>0$, which follows from Shapiro's lemma.
	
	We conclude that $\ker \phi$ has at most one end. If it had zero ends, it would be finite, but $\pi_1 M$ is torsion free, and hence $\ker \phi$ would be trivial, forcing $\pi_1 M \cong \Z$. But this implies that the cohomological dimension of $\pi_1 M$ is one, contradicting the existence of the fundamental class of $M$.
\end{proof}

This confirms the intuition of Douba, expressed after the statement of \cite{Douba2024}*{Theorem 1}, that his Zariski-dense  subgroup of $\mathrm{SL}_5(\Z)$ that is finitely generated but not finitely presented has one end -- the subgroup arises precisely as a kernel of an epimorphism $\phi \colon \pi_1(M) \to \Z$ where $M$ is a closed four-dimensional hyperbolic manifold.

\begin{rmk}
	 A natural source of examples of groups with \[\homol^2(G;\K G) = 0,\]
	  with $\K$ being a  field, is provided by the class of duality groups. A group $G$ of type $\typeFP$ is an $n$-dimensional \emph{duality group} if $\homol^i(G;\Z G) = 0$ for all $i \neq n$ and $\homol^n(G;\Z G)$ is torsion-free as a $\Z$-module. Examples of duality groups include Poincar\'e-duality groups, and therefore fundamental group of closed aspherical manifolds, but they also include the torsion-free finite-index subgroups of arithmetic groups \cite{BorelSerre1973}, mapping class groups \cite{Harer1986}, and outer automorphism groups of free groups \cite{BestvinaFeighn2000}. Note also that if $(G,\mathcal H)$ is a Poincar\'e-duality pair of dimension at least $4$ over some field $\K$, then $\homol^2(G;\K G)$ vanishes. Thus, algebraic fibres in any of the groups listed above are one-ended.
\end{rmk}

\section{Virtual algebraic fibring}

\subsection{RFRS groups}

We now give a sharpening of \cref{virt alg fibring} which holds for group pairs.

\begin{prop}\label{prop:tuple_sigma}
	Let $G$ be a RFRS group of type $\typeFP_n(\K)$ for some field $\K$ and let $\mathcal H$ be a (possibly empty) finite tuple of subgroups of $G$, where each $H \in \mathcal H$ is of type $\typeFP_n(\K)$. If
	\[
		\betti{G}{i}{\K} = \betti{H}{i}{\K} = 0
	\]
	for all $i \leqslant n$ and each $H \in \mathcal H$, then there is a finite-index normal subgroup $G_0 \leqslant G$ and an antipodally symmetric open set $U \subseteq S(G_0)$, invariant under the conjugation action of $G$, such that $\mathring{\overline U} \supseteq S(G)$ and 
	\[
		\homol_i (G_0; \nov \K {G_0} {\phi}) = \homol_i (H; \nov \K {H} {\phi|_H}) = 0
	\]
	for all $i \leqslant n$, $H \in \mathcal H_{G_0}$, and $\phi \in U$.
\end{prop}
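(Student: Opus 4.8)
The plan is to feed \cref{virt alg fibring} a single finite set $S \subseteq \D_{\K G}$ assembled from chain contractions witnessing the vanishing of the $L^2$-Betti numbers of $G$ and of the members of $\mathcal H$, and then to transport the resulting acyclicity over $\D_{\K G_0}$ back to the relevant Novikov rings. The hypotheses say precisely that $\homol_i(G;\D_{\K G}) = 0$ and $\homol_i(H;\D_{\K H}) = 0$ for all $i \leqslant n$ and $H \in \mathcal H$; here each $H \leqslant G$ is RFRS and of type $\typeFP_n(\K)$, so $\D_{\K H}$ is defined and embeds into $\D_{\K G}$ by Hughes-freeness. We may assume $n \geqslant 1$, so that $G$ and each $H$ is finitely generated. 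Picking free resolutions of $\K$ over $\K G$ and over each $\K H$ that are finitely generated in degrees $\leqslant n$, the vanishing of the division-ring homology produces partial chain contractions of length $n$ of the complexes $\D_{\K G}\otimes_{\K G}(-)$ and $\D_{\K H}\otimes_{\K H}(-)$; since $\D_{\K G}$ and the $\D_{\K H}$ are skew-fields and the resolutions are finitely generated in the relevant degrees, each such contraction is assembled from finitely many matrix entries in $\D_{\K G}$. Let $S$ be the (finite) set of all these entries, and apply \cref{virt alg fibring} to $G$ and $S$: it returns a finite-index normal subgroup $G_0 \trianglelefteq G$ and an open $U \subseteq S(G_0)$, invariant under $G$-conjugation and the antipodal map, with $\mathring{\overline U} \supseteq S(G)$, such that every element of $S$ lies in the crossed product $R_U := (\D_{\K G_0}\cap\nov\K{G_0}U)G/G_0 \subseteq \D_{\K G}$. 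These $G_0$ and $U$ will be the output; the listed properties of $U$ are exactly those guaranteed by the theorem, so only the Novikov-homology vanishing remains.

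Since $G_0 \trianglelefteq G$, for each $H \in \mathcal H$ the subgroup $H_0 := H \cap G_0$ is normal of finite index in $H$, and every member of $\mathcal H_{G_0}$ is a $G$-conjugate of some such $H_0$; as $U$ is $G$-conjugation invariant, the desired conclusion for $\mathcal H_{G_0}$ is equivalent to $\homol_i(H_0;\nov\K{H_0}{\phi|_{H_0}}) = 0$ for all $i \leqslant n$, $\phi \in U$, and $H \in \mathcal H$. For $G_0$ itself: using the crossed-product identification $\D_{\K G} = \D_{\K G_0}G/G_0$ (valid because $G_0 \trianglelefteq G$ has finite index), the complex $\D_{\K G}\otimes_{\K G}(-)$ is rewritten as $\D_{\K G_0}\otimes_{\K G_0}(-)$ of the restricted resolution, which is still finitely generated in low degrees; the chain contraction for $G$ becomes $\D_{\K G_0}$-linear, and because its entries lie in $R_U$, the $G/G_0$-components that appear in the rewritten matrices lie in $\D_{\K G_0}\cap\nov\K{G_0}U$ (a subring stable under the $G$-action, since $U$ is). For each $\phi \in U$, applying the $\K G_0$-algebra homomorphism $\D_{\K G_0}\cap\nov\K{G_0}U \hookrightarrow \nov\K{G_0}\phi$ to this contraction yields a partial chain contraction of length $n$ over $\nov\K{G_0}\phi$, so $\homol_i(G_0;\nov\K{G_0}\phi) = 0$ for $i \leqslant n$. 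Running the same scheme with $H$ in place of $G$ and $H_0$ in place of $G_0$ — and using that $H_0 = H\cap G_0$ makes the $H/H_0$-decomposition compatible with the $G/G_0$-decomposition, so that the entries of the restricted chain contraction for $H$ still land in $\nov\K{G_0}\phi$ — shows $\homol_i(H_0;\nov\K{G_0}\phi) = 0$ for $i \leqslant n$, and \cref{lem:restrict_novikov} then upgrades this to $\homol_i(H_0;\nov\K{H_0}{\phi|_{H_0}}) = 0$.

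The main obstacle is the bookkeeping in the previous paragraph: making precise the identification of the division-ring-coefficient complexes under restriction of scalars along the finite-index normal inclusions $G_0 \leqslant G$ and $H_0 \leqslant H$, and verifying that the matrix entries of the contractions obtained this way genuinely lie in $\nov\K{G_0}\phi$ (respectively, in a subring to which \cref{lem:restrict_novikov} applies). This rests on the normality $H_0 = H \cap G_0 \trianglelefteq H$ — a consequence of $G_0 \trianglelefteq G$ — on the crossed-product structure of Hughes-free division rings over finite-index normal subgroups, and on the $G$-invariance of $U$ from \cref{virt alg fibring}, which together ensure that all the coefficient subrings in play are preserved by the conjugations and decompositions used.
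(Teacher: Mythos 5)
Your proposal is correct and follows essentially the same route as the paper: collect the finitely many matrix entries of partial chain contractions over $\D_{\K G}$ and the $\D_{\K H}$ into a finite set $S$, invoke \cref{virt alg fibring} to obtain $G_0$ and $U$ with the entries landing in $(\D_{\K G_0}\cap\nov\K{G_0}U)G/G_0$, transport the contractions into the Novikov rings $\nov{\K}{G_0}{\phi}$ for $\phi \in U$, and finish with \cref{lem:restrict_novikov}. The only immaterial variation is that you conjugate the character while fixing $H_0 = H\cap G_0$, whereas the paper fixes $\phi$ and conjugates $H$ (both exploit the $G$-invariance of $U$ and the normality of $G_0$ in the same way), and the bookkeeping you flag as the main obstacle --- passing the contraction through the crossed-product decompositions $\D_{\K G}=\D_{\K G_0}G/G_0$ and $\D_{\K H}=\D_{\K H_0}H/H_0$ --- is left at a comparable level of terseness in the paper's own proof.
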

\begin{proof}
	We pick a free resolution of the trivial $\K G$-module $\K$ in which the modules up to degree $n$ are finitely generated, and endow the free modules with bases; we pick analogous resolutions for every group $H \in \mathcal H$. Since the homologies of the groups $G$ and $H$ over $\D_{\K G}$ and $\D_{\K H}$, respectively, vanish up to degree $n$, we may build partial chain contractions witnessing this vanishing. Each such partial chain contraction consists of $n+1$ finite matrices over $\D_{\K G}$, since $\D_{\K H}$ is naturally a sub-skew-field of $\D_{\K G}$; since $\mathcal H$ is finite, altogether this gives finitely many finite matrices over $\D_{\K G}$. By \cref{virt alg fibring}, there exists a finite-index normal subgroup $G_0$ of $G$ and an antipodally symmetric open set $U \subseteq S(G_0)$ such that all of the entries of the matrices lie over $(\D_{\K G_0} \cap \nov \K {G_0} U) G/G_0$. Moreover, $\mathring{\overline U} \supseteq S(G)$ and $U$ is invariant under the conjugation action of $G$ on $S(G_0)$.
		
	We take a map $\phi \colon G_0 \to \R$ from $U$, and obtain $\homol_i (G_0; \nov \K {G_0} { \phi}) = 0$ for every $i\leqslant n$, and  
	\[
		\homol_i (G_0\cap g^{-1}Hg; \nov \K {G_0} { \phi}) = 0
	\]
	for every $H \in \mathcal H$, $g \in G$, and $i \leqslant n$ (the fact that we may conjugate $H$ comes from conjugacy-invariance of $U$). Hence, $\homol_i (H; \nov \K {G_0} {\phi}) = 0$ for every $i \leqslant n$ and every $H \in \mathcal H_{G_0}$. By \cref{lem:restrict_novikov}, $\homol_i (H; \nov \K H {\phi\vert_H}) = 0$ for every $i \leqslant n$ and every $H \in \mathcal H_{G_0}$. \qedhere
\end{proof}

Since we are interested in both virtual algebraic fibring over a specific field, and over all fields simultaneously, for notational convenience let us consider a set $\mathcal K$ of fields with a specific property.

\begin{dfn}
	We say that a set of fields $\mathcal K$ is \emph{small} if $\mathcal K$ is finite or if it contains $\Q$ and for every prime $p$ it contains only finitely many fields of characteristic $p$.
	
	If $\mathcal K$ is finite, we will say that a group is of \emph{type $\typeFP_{n}(\mathcal K)$} if it is of type  $\typeFP_{n}(\K)$ for every $\K \in \mathcal K$; similarly, we define $\PD_{\mathcal K}^n$ to mean $\PD_{\K}^n$ for every $\K \in \mathcal K$.
		
	When $\mathcal K$ is infinite, we  will say that a group is of \emph{type $\typeFP_{n}(\mathcal K)$} if it is of type  $\typeFP_{n}$; similarly, we define $\PD_{\mathcal K}^n$ to mean $\PD^n$.
\end{dfn}

\begin{thm}
	\label{l2 all chars}
	Let $\mathcal K$ be a small set of fields.
	Let $G$ be a RFRS group of type $\typeFP_{n}(\mathcal K)$, and let $\mathcal H$ be a possibly empty finite tuple of subgroups of $G$, all of type $\typeFP_{m}(\mathcal K)$. The following are equivalent:
	\begin{enumerate}
		\item \label{item virt fib} There exists a finite-index subgroup $G_0$ of $G$ and an epimorphism $\phi \colon G_0 \to \Z$ that does not vanish on any of the subgroups in $\mathcal H_{G_0}$, such that for all $\K \in \mathcal K$ the group $\ker \phi$ is of type $\typeFP_n(\K)$, and $\ker \phi|_{H'}$ is of type $\typeFP_m(\K)$ for every $H' \in \mathcal H_{G_0}$;
		\item \label{item l2 vanishing} We have $\betti{G}{i}{\K} = 0$ for all $i\leqslant n$ and all $\K \in \mathcal K$, and $\betti{H}{i}{\K} = 0$ for all $i\leqslant m$, all $\K \in \mathcal K$, and all $H \in \mathcal H$.
	\end{enumerate}
\end{thm}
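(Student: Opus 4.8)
The plan is to prove the two implications separately. For (1)$\Rightarrow$(2) one only needs the Mapping Torus Theorem together with the scaling of $L^2$-Betti numbers under finite index; for (2)$\Rightarrow$(1) one uses \cref{prop:tuple_sigma} and the Generalised Sikorav Theorem (\cref{Sikorav}), and in the infinite case additionally \cref{cor:large_primes}. So, for (1)$\Rightarrow$(2): fix $\K \in \mathcal{K}$. Since $\ker\phi$ is of type $\typeFP_n(\K)$, \cref{mapping torus} gives $\betti{G_0}{i}{\K}=0$ for $i\leqslant n$, and \cref{betti nrs scaling} upgrades this to $\betti{G}{i}{\K}=0$ for $i\leqslant n$. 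For $H\in\mathcal{H}$, choose a subgroup $H'=x^{-1}Hx\cap G_0\in\mathcal{H}_{G_0}$; since $\phi|_{H'}\neq 0$, rescaling produces an epimorphism $H'\to\Z$ whose kernel is $\ker\phi|_{H'}$, of type $\typeFP_m(\K)$ by hypothesis, so \cref{mapping torus} gives $\betti{H'}{i}{\K}=0$ for $i\leqslant m$, and then $\betti{H}{i}{\K}=\betti{x^{-1}Hx}{i}{\K}=0$ by \cref{betti nrs scaling}. As $\K$ was arbitrary, (2) holds.

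For (2)$\Rightarrow$(1) with $\mathcal{K}=\{\K_1,\dots,\K_r\}$ finite: for each $j$, the argument of \cref{prop:tuple_sigma} (run with homological degree $n$ for $G$ and $m$ for the subgroups in $\mathcal{H}$) yields a finite-index normal $N_j\leqslant G$ and an open, $G$-invariant, antipodally symmetric cone $U_j\subseteq S(N_j)$ with $\mathring{\overline{U_j}}\supseteq S(G)$ such that $\homol_i(N_j;\nov{\K_j}{N_j}{\psi})=0$ for $i\leqslant n$ and $\homol_i(H;\nov{\K_j}{H}{\psi|_H})=0$ for $i\leqslant m$ and $H\in\mathcal{H}_{N_j}$, for every $\psi\in U_j$. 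Set $G_0=\bigcap_j N_j$ and, viewing each $U_j$ inside $S(G_0)$ via $S(N_j)\hookrightarrow S(G_0)$, set $U=\bigcap_j U_j$; a routine topological argument (a finite intersection of sets dense in a neighbourhood of $S(G)$ is again dense in a neighbourhood of $S(G)$) shows $U$ is an open, $G$-invariant, antipodally symmetric cone with $\mathring{\overline U}\supseteq S(G)$, in particular non-empty. Choose an epimorphism $\phi\colon G_0\to\Z$ in $U$ that does not vanish on any $H'\in\mathcal{H}_{G_0}$ (a generic choice, possibly after refining $G_0$). For each $j$, $\phi\in U_j\subseteq S(N_j)$ is the restriction to $G_0$ of some $\psi_j\in U_j$; since $\nov{\K_j}{N_j}{\psi_j}$ is the $\K_j N_j$-module induced from $\nov{\K_j}{G_0}{\phi}$, Shapiro's lemma, \cref{lem:restrict_novikov}, and antipodal symmetry of $U_j$ give
\[
\homol_i(G_0;\nov{\K_j}{G_0}{\pm\phi})=0 \ (i\leqslant n), \qquad \homol_i(H';\nov{\K_j}{H'}{\pm\phi|_{H'}})=0 \ (i\leqslant m,\ H'\in\mathcal{H}_{G_0}).
\]
As $G_0$ is of type $\typeFP_n(\K_j)$ and each $H'$ of type $\typeFP_m(\K_j)$, \cref{Sikorav} shows $\ker\phi$ is of type $\typeFP_n(\K_j)$ and $\ker\phi|_{H'}$ of type $\typeFP_m(\K_j)$; since $j$ was arbitrary, this is (1).

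For $\mathcal{K}$ infinite, $\mathcal{K}$ contains $\Q$ and finitely many fields of each positive characteristic, and $G$ is of type $\typeFP_n$ over $\Z$. Since one cannot intersect infinitely many open sets, the idea is to isolate finitely many ``bad'' primes: apply \cref{prop:tuple_sigma} over $\Q$ to get $(N,V)$ as above, then combine the first part of \cref{cor:large_primes} (applied to $N$ and to the finitely many subgroups of $\mathcal{H}_N$, with characters $\pm\psi$) with the openness of the fibring locus (\cref{fibring open}) and compactness of $\overline V$ on the unit sphere of $\homol^1(N;\R)$ to produce a \emph{single} finite set of primes $\mathcal{P}$ such that $\homol_i(N;\nov{\Z_\mathcal{P}}{N}{\psi})=0$ ($i\leqslant n$) and $\homol_i(H;\nov{\Z_\mathcal{P}}{H}{\psi|_H})=0$ ($i\leqslant m$, $H\in\mathcal{H}_N$) for all integral $\psi\in V$. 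For each $p\in\mathcal{P}$ with $\F_p\in\mathcal{K}$, apply \cref{prop:tuple_sigma} over $\F_p$ to get $(N_p,V_p)$, put $G_0=N\cap\bigcap_p N_p$ and $U=V\cap\bigcap_p V_p\subseteq S(G_0)$, and pick an epimorphism $\phi\colon G_0\to\Z$ in $U$ not vanishing on $\mathcal{H}_{G_0}$. Shapiro's lemma as before gives vanishing of the Novikov homologies of $G_0$ and of the subgroups of $\mathcal{H}_{G_0}$, for $\pm\phi$, over $\Z_\mathcal{P}$ (from $V$) and over each such $\F_p$ (from $V_p$). Finally, for any $\K\in\mathcal{K}$ the ring $\nov{\K}{G_0}{\phi}$ is an algebra over $\nov{\Z_\mathcal{P}}{G_0}{\phi}$ when $\K$ has characteristic $0$ or characteristic outside $\mathcal{P}$ (via $\Z_\mathcal{P}\to\K$ or $\Z_\mathcal{P}\to\F_p$), and over $\nov{\F_p}{G_0}{\phi}$ when $\K$ has characteristic $p\in\mathcal{P}$; pushing chain contractions forward yields $\homol_i(G_0;\nov{\K}{G_0}{\pm\phi})=0$ ($i\leqslant n$) and $\homol_i(H';\nov{\K}{H'}{\pm\phi|_{H'}})=0$ ($i\leqslant m$) for every $\K\in\mathcal{K}$ and $H'\in\mathcal{H}_{G_0}$, and \cref{Sikorav} gives (1).

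The main obstacle is precisely the infinite case: one must pin down a single finite exceptional set of primes that works uniformly for a whole cone of characters, and this has to be done \emph{before} passing to the common finite-index subgroup $G_0$, so \cref{cor:large_primes} cannot simply be applied to the character produced at the end. This forces the combination of \cref{cor:large_primes} with openness of the fibring locus and a compactness argument; the remaining work is bookkeeping with the several finite-index subgroups lying over $G_0$ and the corresponding identifications of first-cohomology spaces, together with the routine (but mildly delicate) point of choosing $\phi$ not vanishing on the peripheral subgroups.
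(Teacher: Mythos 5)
Your implication (1)$\Rightarrow$(2) is correct and matches the paper. The converse direction, however, has a genuine gap at the intersection step, in both the finite and infinite cases.

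You write ``Set $G_0=\bigcap_j N_j$ and, viewing each $U_j$ inside $S(G_0)$ via $S(N_j)\hookrightarrow S(G_0)$, set $U=\bigcap_j U_j$,'' and then invoke a routine Baire-type argument. The problem is that the restriction map $\homol^1(N_j;\R)\to\homol^1(G_0;\R)$ is injective but \emph{not} surjective in general, so the image of $S(N_j)$ is a proper linear subspace $L_j\smallsetminus\{0\}$ of $S(G_0)$, and $U_j$ is open only in that subspace. Consequently the $U_j$ are not open in $S(G_0)$, and ``dense in a neighbourhood of $S(G)$'' has no meaning for them there. Worse, the intersection can simply be empty: if, say, $L_1\cap L_2$ equals the image of $S(G)$ and each $U_j$ happens to lie in $L_j\smallsetminus L_{j'}$ (which is perfectly compatible with $\mathring{\overline{U_j}}\supseteq S(G)$ inside $L_j$), then $U_1\cap U_2=\varnothing$. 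Nothing in the statement of \cref{prop:tuple_sigma} or \cref{virt alg fibring} rules this out.

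The paper avoids this by iterating rather than intersecting. Having obtained a $\Q$-fibring character $\phi$ on $G_0$ and a finite set $\mathcal P$ of bad primes via \cref{prop:large_primes} applied to the \emph{single} character $\pm\phi$ (no compactness needed), one handles the finite exceptional set $\mathcal L$ one field at a time. At each step one applies \cref{prop:tuple_sigma} over the new field $\mathbb L$ \emph{to the current ambient group} $G_k$, producing $G_{k+1}\lhdslant G_k$ and $V\subseteq S(G_{k+1})$ with $\mathring{\overline V}\supseteq S(G_k)\ni\phi_k|_{G_{k+1}}$. Then \cref{fibring open} gives a genuinely open neighbourhood $W\ni\phi_k|_{G_{k+1}}$ in $S(G_{k+1})$ on which the previously acquired vanishings (over $\Z_{\mathcal P}$ and the earlier fields of $\mathcal L$, finitely many in total) persist. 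Since $W$ is open in $S(G_{k+1})$ and meets $\mathring{\overline V}$, it meets $V$, and one picks a new integral character there. This replaces your problematic intersection of sets living in incomparable subspaces by a sequence of honest nonempty open intersections in a fixed $S(G_{k+1})$.

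As for your compactness argument for a uniform $\mathcal P$ over the whole cone $V$: it is plausible (after shrinking $V$ so its closure on the unit sphere is contained in $V$), but it is not needed once one iterates, and it does not repair the intersection gap above, which recurs when you intersect $V$ with the $V_p$'s. I would also flag that the claim that a generic choice of $\phi$ avoids vanishing on all $H'\in\mathcal H_{G_0}$ deserves a word of justification, though this is a minor point shared with the paper.
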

\begin{proof}
	We will prove both implications in turn.
	\begin{enumerate}
		\item[\eqref{item virt fib} $\Rightarrow$ \eqref{item l2 vanishing}]
		This follows immediately from \cref{mapping torus,betti nrs scaling}.
		\item[\eqref{item l2 vanishing} $\Rightarrow$ \eqref{item virt fib}]
		We first suppose that $\mathcal K$ is infinite, and so $\Q \in \mathcal K$. By \cref{Sikorav} and \cref{prop:tuple_sigma}, there exists a finite-index subgroup $G_0 \leqslant G$ and an epimorphism $\phi \colon G_0 \to \Z$ such that $\ker \phi$ is of type $\typeFP_n(\Q)$ and $\ker \phi\vert_{H}$  is of type $\typeFP_m(\Q)$ for every $H \in \mathcal H_{G_0}$.
		
		Applying  \cref{prop:large_primes} to $G_0$ and each group in $\mathcal H_{G_0}$ in turn for $\phi$ and $-\phi$, and then using \cref{Sikorav}, there exists a finite collection of primes $\mathcal P$ such that all the kernels above are of type $\typeFP_n(\Z_{\mathcal P})$ or $\typeFP_m(\Z_{\mathcal P})$, respectively, and hence of type $\typeFP_n(\F_p)$ or $\typeFP_m(\F_p)$ for all primes not in $\mathcal P$, since  there is a ring epimorphism $\Z_\mathcal P \rightarrow \F_p$ for all primes $p \notin \mathcal P$. (It is precisely \cref{prop:large_primes} that forces us to assume that $G$ is of type $\typeFP_n$ and every $H \in \mathcal H$ is of type $\typeFP_m$.)
		
		Being of type $\typeFP_n(\F_p)$ or $\typeFP_m(\F_p)$, the kernels are automatically of type $\typeFP_n(\K)$ or $\typeFP_m(\K)$, respectively, for every field $\K$ of characteristic $p$. Similarly, being of type $\typeFP_n(\Q)$ or $\typeFP_m(\Q)$ takes care of all fields of characteristic $0$.
		
		Let $\mathcal L$ be the subset of $\mathcal K$ consisting of the fields of characteristics lying in $\mathcal P$. Since $\mathcal K$ is small, the set $\mathcal L$ is finite. Our goal now is to show that, up to passing to a further finite-index subgroup, we may arrange for our kernels to also be of type $\typeFP_n(\K)$ or $\typeFP_m(\K)$ for every field $\K \in \mathcal L$. If $\mathcal K$ were finite, then we would have been in this situation to start with, by taking any $\phi$ and $\mathcal L = \mathcal K$. Hence, from now on we may ignore the case distinction of whether $\mathcal K$ is infinite.
		
		Take ${\mathbb L} \in \mathcal L$. Note that each $H \in \mathcal H_{G_0}$ is commensurable to some subgroup in $\mathcal H$, so $\betti{H}{i}{\K} = 0$ for each $\K \in \mathcal K$ and $i \leqslant m$. By \cref{prop:tuple_sigma}, there is a finite-index normal subgroup $G_1$ of $G_0$ and an open set $V \subseteq S(G_1)$ such that 
		\[
			\homol_i (G_1; \nov {\mathbb L} {G_1} {\psi}) = \homol_j (H; \nov {\mathbb L} {H} {\psi|_H}) = 0
		\]
		for all $i \leqslant n$, $j \leqslant m$, $\psi \in V$, and $H \in \mathcal H_{G_1}$. Moreover, $\mathring{\overline V} \supseteq S(G_0)$ and $V$ is invariant under the conjugation action of $G_0$ on $S(G_1)$ and under the antipodal map.
		
		Since vanishing of Novikov homology is an open property for characters, there exists an antipodally symmetric open set $W$ in $S(G_1)$ containing $\phi\vert_{G_1}$ such that for every $\psi \in W$ we have
		\[
			\homol_i (G_1; \nov \K {G_1} \psi) =  \homol_j (H; \nov \K H {\psi\vert_H}) = 0 
		\]
		for every $i\leqslant n$, $j\leqslant m$, $H \in \mathcal H_{G_1}$, and $\K \in \mathcal K \s- \mathcal L$. We easily arrange for $W$ to be invariant under the action of $G_0$ on $S(G_1)$, since this action factors through the finite group $G_0/G_1$; we also easily arrange for $W$ to be invariant under the antipodal map. Fix an integral character $\psi \in W \cap V$. We obtain
		\[
			\homol_i (G_1; \nov {{\mathbb L}} {G_1} { \pm \psi}) = \homol_j (H; \nov {{\mathbb L}} H {\pm \psi\vert_H}) = 0 
		\]
		for every $i\leqslant n$, $j\leqslant m$, and $H \in \mathcal H_{G_1}$; we also have $\psi$ not vanishing on the groups $H$. Hence, by passing to a deeper finite-index subgroup $G_1$, replacing $\mathcal H_{G_0}$ by a potentially more numerous, but still finite, tuple $\mathcal H_{G_1}$, and replacing $\phi$ by $\psi$, we have guaranteed that all the relevant kernels are of type $\typeFP_n(\K)$ or $\typeFP_m(\K)$ for all fields $\K \in \mathcal K \smallsetminus \mathcal L$, and for the field ${\mathbb L}$. Since $\mathcal L$ is finite, repeating this process finitely many times yields the desired finite-index subgroup and homomorphism.
		\qedhere
	\end{enumerate}
\end{proof}

The arguments above also show the following: if $G$ is a RFRS group of type $\typeFP$ with $\operatorname{cd}(G) = n$ and $\betti G n \K = 0$ for all fields $\K$ in a small set of fields $\mathcal K$, then there is a finite-index subgroup $G_0 \leqslant G$ and an epimorphism $\phi \colon G_0 \rightarrow \Z$ such that $\homol^n(G_0;\nov{\K}{G_0}{\pm\phi}) = 0$ for all $\K \in \mathcal K$. This uses \cref{cor:large_primes}(\ref{item:large_primes_cohomol}) (see also \cite{Fisher2024_cdKernels}*{Proposition 3.2}). Applying \cite{Fisher2024_cdKernels}*{Theorem 3.5} yields the following corollary.

\begin{cor}
	Let $G$ be a RFRS group of type $\typeFP$ and suppose that $\operatorname{cd}(G) = n$.
	\begin{enumerate}
		\item If $\betti{G}{n}{\K} = 0$ for all fields $\K$ in some small set of fields $\mathcal K$, then there exists a subgroup $G_0 \leqslant G$ of finite index and an epimorphism $\phi \colon G_0 \rightarrow \Z$ such that $\operatorname{cd}_\K(\ker \phi) = n-1$ for all $\K \in \mathcal K$.
		\item If $\betti{G}{n}{\K} = 0$ for all prime fields $\K$, then there exists a subgroup $G_0 \leqslant G$ of finite index and an epimorphism $\phi \colon G_0 \rightarrow \Z$ such that $\operatorname{cd}_\K(\ker \phi) = n-1$ for all fields $\K$.
	\end{enumerate}
\end{cor}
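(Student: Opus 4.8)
The plan is to run the argument behind \cref{l2 all chars} in the single top degree $n$, replacing homology by cohomology throughout. Once we have produced a finite-index subgroup $G_0\leqslant G$ and an epimorphism $\phi\colon G_0\to\Z$ with $\homol^n(G_0;\nov{\K}{G_0}{\pm\phi})=0$ for all $\K\in\mathcal K$, \cite{Fisher2024_cdKernels}*{Theorem 3.5} immediately yields $\operatorname{cd}_\K(\ker\phi)=n-1$ for each such $\K$, which is part~(1). For part~(2) one takes $\mathcal K$ to be the set of prime fields; since every field is an algebra over its prime subfield and cohomological dimension can only decrease under such a base change, part~(1) forces $\operatorname{cd}_\K(\ker\phi)\leqslant n-1$ for every field $\K$, while the reverse inequality holds because $\ker\phi$ is normal in $G_0$ with quotient $\Z$ and $\operatorname{cd}_\K(G_0)=\operatorname{cd}_\K(G)=n$ (the last equality for type-$\typeFP$ groups being a consequence of $\operatorname{cd}_\Z(G)=n$).

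So everything reduces to securing the top-degree Novikov-cohomology vanishing. Over $\Q$ — and, when $\mathcal K$ is finite, over each of its members — this is exactly \cite{Fisher2024_cdKernels}*{Proposition 3.2}: since $\operatorname{cd}_\K(G)=n$, the hypothesis $\betti{G}{n}{\K}=0$ says precisely that the top coboundary of a finite projective resolution of the trivial $\K G$-module becomes split over the Linnell skew-field $\D_{\K G}$, and feeding the finitely many matrix entries of such a splitting into \cref{virt alg fibring} produces the required finite-index subgroup. Tracking this proof exactly as in \cref{prop:tuple_sigma}, one obtains not merely one good character but a conjugation- and antipodally-invariant open cone $U\subseteq S(G_0)$ with $\mathring{\overline U}\supseteq S(G)$, every element $\psi$ of which satisfies $\homol^n(G_0;\nov{\K}{G_0}{\psi})=0$ for the fields handled so far.

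Next comes the reduction to finitely many primes. Fixing an integral $\phi\in U$ and applying \cref{cor:large_primes}(\ref{item:large_primes_cohomol}) with $j=n$ to $G_0$, for both $\phi$ and $-\phi$, we obtain a finite set of primes $\mathcal P$ with $\homol^n(G_0;\nov{\Z_{\mathcal P}}{G_0}{\pm\phi})=0$; pushing the resulting degree-$n$ cochain contraction along the ring homomorphism $\nov{\Z_{\mathcal P}}{G_0}{\pm\phi}\to\nov{\F_p}{G_0}{\pm\phi}$ for each prime $p\notin\mathcal P$ gives the same vanishing over $\F_p$, hence over every field of characteristic outside $\mathcal P$. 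This is the step that uses $G$ being of type $\typeFP$, hence $\typeFP_n$, exactly as flagged in the proof of \cref{l2 all chars}. The members of $\mathcal K$ whose characteristic lies in $\mathcal P$ form a finite set $\mathcal L$, because $\mathcal K$ is small, and these are dealt with one field at a time precisely as in the final paragraph of the proof of \cref{l2 all chars}: for $\mathbb L\in\mathcal L$ one invokes \cite{Fisher2024_cdKernels}*{Proposition 3.2} over $\mathbb L$ — legitimate since $\betti{G}{n}{\mathbb L}=0$ is inherited by finite-index subgroups via \cref{betti nrs scaling} — together with the openness of the degree-$n$ Novikov-cohomology-vanishing condition in the character (cf.\ \cite{BieriRenz1988} and \cite{Fisher2024_cdKernels}), and then picks a common integral character lying in the intersection of the relevant open cones before passing to the corresponding finite-index subgroup. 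After $|\mathcal L|$ such steps we arrive at the desired $G_0$ and $\phi$.

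The main obstacle is this last bookkeeping: each time we descend to a deeper finite-index subgroup in order to repair one characteristic, we must not destroy the vanishing already arranged for the others. As in \cref{l2 all chars}, openness of the vanishing condition is exactly what makes this work — the newly chosen integral character is taken inside the intersection of the previously available admissible cone with the one just produced — so the process terminates, and assembling the pieces gives both statements.
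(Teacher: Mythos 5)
Your proposal takes the same route as the paper: run the cohomological version of the argument behind \cref{l2 all chars} in top degree $n$, pass from $\Q$ to $\Z_{\mathcal P}$ via \cref{cor:large_primes}\ (\ref{item:large_primes_cohomol}), repair the finitely many remaining characteristics one at a time using openness, and feed the resulting top-degree Novikov-cohomology vanishing into \cite{Fisher2024_cdKernels}*{Theorem 3.5}.

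One aside in your write-up deserves scrutiny. In your part~(2) you assert that $\operatorname{cd}_\K(G_0)=\operatorname{cd}_\K(G)=n$ for every field $\K$, claiming the last equality ``for type-$\typeFP$ groups is a consequence of $\operatorname{cd}_\Z(G)=n$.'' This implication is not automatic: for a group of type $\typeFP$ one has $\homol^n(G;\K G)\cong\homol^n(G;\Z G)\otimes_\Z\K$, so $\operatorname{cd}_\K(G)$ can in principle drop below $n$ if $\homol^n(G;\Z G)$ is, say, a torsion abelian group (for $\K=\Q$) or $p$-divisible (for $\K=\F_p$). Luckily the claim is not needed for the argument you are running. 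Once part~(1) gives $\operatorname{cd}_{\F_p}(\ker\phi)=n-1$ (resp.\ $\operatorname{cd}_\Q(\ker\phi)=n-1$), the equality $\operatorname{cd}_\K(\ker\phi)=n-1$ for \emph{every} field $\K$ of the same characteristic is immediate, because cohomological dimension over a field depends only on the characteristic: if $\F\subseteq\K$ is a field extension then $\K G$ is faithfully flat over $\F G$ and $\operatorname{Ext}^i_{\K G}(\K,M\otimes_\F\K)\cong\operatorname{Ext}^i_{\F G}(\F,M)\otimes_\F\K$, giving $\operatorname{cd}_\K(H)=\operatorname{cd}_\F(H)$ for any group $H$. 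Replacing the normal-extension/$\operatorname{cd}_\K(G)$ argument with this observation makes the deduction of part~(2) from part~(1) both shorter and safe.
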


\subsection{Poincar\'e-duality groups}
\label{sec PD fibring}

In this section we will work with orientable Poincar\'e-duality pairs $(G, \mathcal H)$. Recall that we allow $\mathcal H = \varnothing$, in which case by our convention we are dealing with an orientable Poincar\'e duality group $G$. Again, we will insist on $R$ being a commutative ring in this subsection.

\begin{lem}
	\label{pd half fibring}
	Let $(G,\mathcal H)$ be an orientable $\PD_R^n$-pair, and let $\phi \colon G \to \Z$ be a non-trivial homomorphism. If $\ker \phi$ is of type $\typeFP_{\lfloor n/2 \rfloor}(R)$, and $\ker \phi|_H$ is a proper
	 subgroup of $H$ of type $\typeFP_{\lfloor (n-1)/2 \rfloor}(R)$ for each $H \in \mathcal H$, then 
	\[
	\homol_i(G; \nov R G {\pm \phi}) = \homol^i(G; \nov R G {\pm \phi}) = 0
	\]
	for all $i$.
\end{lem}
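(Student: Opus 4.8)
\emph{The plan} is to reduce the claim to the vanishing, in \emph{all} degrees, of the Novikov cohomology of the pair $(G,\mathcal H)$, and to obtain that vanishing from two overlapping ranges of degrees: an upper range coming from Sikorav's theorem, and a lower range coming from its Poincar\'e dual together with \cref{les} and \cref{thm:cohomol_fibre}. \textbf{Reduction.} After rescaling we may assume $\phi$ is an epimorphism onto $\Z$, which affects neither $\ker\phi$ nor $\nov R G{\pm\phi}$. By \cref{flip}, applied to $\phi$ and to $-\phi$, it suffices to show $\homol_i(G;\nov R G{\pm\phi})=0$ for all $i$; and by the duality $\homol_i(G;M)\cong\homol^{n-i}(G,\mathcal H;M)$ valid for the orientable $\PD^n_R$-pair $(G,\mathcal H)$ (\cref{PD}), this is equivalent to
\[
	E^k \;:=\; \homol^k\!\big(G,\mathcal H;\nov R G\phi\big)\;\oplus\;\homol^k\!\big(G,\mathcal H;\nov R G{-\phi}\big)\;=\;0\qquad\text{for every }k.
\]

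\textbf{The upper range.} Since $(G,\mathcal H)$ is of type $\typeFP(R)$ and $\ker\phi$ is of type $\typeFP_{\lfloor n/2\rfloor}(R)$, the Generalised Sikorav Theorem (\cref{Sikorav}) gives $\homol_i(G;\nov R G{\pm\phi})=0$ for $i\leqslant\lfloor n/2\rfloor$, and the duality $\homol^k(G,\mathcal H;M)\cong\homol_{n-k}(G;M)$ then gives $E^k=0$ for $k\geqslant\lceil n/2\rceil$. By \cref{Sikorav hom and cohom} we also record that $\homol^i(G;\nov R G{\pm\phi})=0$ for $i\leqslant\lfloor n/2\rfloor$. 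If $\mathcal H=\varnothing$ we are already done, since then $E^k=\homol^k(G;\nov R G\phi)\oplus\homol^k(G;\nov R G{-\phi})$, which vanishes for $k\leqslant\lfloor n/2\rfloor$ by the previous sentence and for $k\geqslant\lceil n/2\rceil$ by duality, and $\lfloor n/2\rfloor+1=\lceil n/2\rceil$.

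\textbf{The lower range.} Assume now $\mathcal H\neq\varnothing$ and $1\leqslant k\leqslant\lceil n/2\rceil-1=\lfloor(n-1)/2\rfloor$; the degree $k=0$ is trivial, as $\homol^0(G,\mathcal H;-)=\operatorname{Ext}^{-1}_{RG}(\Delta_{G/\mathcal H},-)=0$. For $k=1$ one argues directly: in the long exact sequence of the pair (\cref{les for pairs}) with coefficients $\nov R G{\pm\phi}$, the group $\homol^1(G,\mathcal H;\nov R G{\pm\phi})$ is, using $\homol^1(G;\nov R G{\pm\phi})=0$, a quotient of $\bigoplus_{H\in\mathcal H}\homol^0(H;\nov R G{\pm\phi})$, and each $\homol^0(H;\nov R G{\pm\phi})=\big(\nov R G{\pm\phi}\big)^H$ vanishes since $H$ is infinite (an invariant function would have infinite support below some level). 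For $2\leqslant k\leqslant\lfloor(n-1)/2\rfloor$ --- a range which is empty unless $n\geqslant5$ --- we use \cref{les}: as $\homol^j(G,\mathcal H;RG)$ is concentrated in degree $n$ and $k\leqslant n-2$, that sequence yields $E^k\cong\homol^k(K,\mathcal H_\phi;RK)$ where $K=\ker\phi$. The long exact sequence of the pair $(K,\mathcal H_\phi)$ then exhibits $\homol^k(K,\mathcal H_\phi;RK)$ as an extension built from $\homol^k(K;RK)$ and $\homol^{k-1}(K';RK)$ for the (finitely many) $K'\in\mathcal H_\phi$, each such $K'$ being conjugate to some $\ker\phi|_H$. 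Now $\homol^k(K;RK)$ embeds into $\homol^{k+1}(G;RG)$ by \cref{thm:cohomol_fibre} applied to $G$, and $\homol^{k+1}(G;RG)=0$ because $k+1\neq n-1$ (using $k\leqslant\lfloor(n-1)/2\rfloor$ and $n\geqslant5$); and since $RK$ is a free $RK'$-module while $K'$ is of type $\typeFP_{k}(R)$, we have $\homol^{k-1}(K';RK)\cong\bigoplus\homol^{k-1}(K';RK')$ --- cohomology of a group of type $\typeFP_k$ commuting with direct sums in degrees below $k$ --- whereupon \cref{thm:cohomol_fibre} applied to the $\PD^{n-1}_R$-group $H$ via $\phi|_H$, whose kernel is of type $\typeFP_{\lfloor(n-1)/2\rfloor}(R)$, identifies $\homol^{k-1}(K';RK')$ with $\homol^{k}(H;RH)=0$.

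\textbf{Conclusion and main difficulty.} As $\lfloor(n-1)/2\rfloor+1=\lceil n/2\rceil$, the two ranges cover all $k\geqslant0$, so $E^k=0$ for every $k$; unwinding the reduction (the duality of \cref{PD}, then \cref{flip} once more) gives $\homol_i(G;\nov R G{\pm\phi})=\homol^i(G;\nov R G{\pm\phi})=0$ for all $i$. I expect the delicate step to be the lower range: keeping track of the exact degree bounds produced by \cref{thm:cohomol_fibre} (and of the peripheral analogue), controlling the restriction of the coefficient module $RK$ to the peripheral subgroups of $K$, and verifying that the low-dimensional cases are genuinely absorbed by the trivial arguments in degrees $0,1$ or by the upper range.
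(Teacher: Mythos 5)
Your proof is correct, but it takes a genuinely different and considerably heavier route than the paper's. The paper observes that each peripheral group $H\in\mathcal H$ is itself an orientable $\PD^{n-1}_R$-group (\cref{peripheral PD}) satisfying the hypotheses of the $\mathcal H=\varnothing$ case with $n$ replaced by $n-1$; applying that case to each $H$ gives $\homol_i(H;\nov RH{\pm\phi|_H})=0$ in \emph{all} degrees, which propagates (via chain contractions and $\nov RH{\pm\phi|_H}\leqslant\nov RG{\pm\phi}$) to $\homol_i(H;\nov RG{\pm\phi})=0$, so the long exact homology sequence of the pair yields $\homol_i(G;\nov RG{\pm\phi})\cong\homol_i(G,\mathcal H;\nov RG{\pm\phi})$ for all $i$. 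Since the left side vanishes for $i\leqslant\lfloor n/2\rfloor$ by \cref{Sikorav} and the right side for $i\geqslant\lceil n/2\rceil$ by duality, both vanish everywhere, and \cref{flip} gives the cohomological statement. That argument never needs \cref{les}, never needs \cref{thm:cohomol_fibre}, and never needs any commutation of cohomology with direct sums.

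Your proof instead attacks the lower range through the Novikov long exact sequence to reduce to $\homol^k(K,\mathcal H_\phi;RK)$, then controls that via the cohomology-at-infinity theorem applied both to $G$ and to the peripheral groups $H$ (with $\phi|_H$), together with commutation of $\homol^{k-1}(K';-)$ with direct sums for $K'$ of type $\typeFP_k(R)$. All of this checks out, but it requires the careful degree book-keeping you flag, plus a couple of points you elided: $\phi|_H$ need not be surjective onto $\Z$, so you must silently rescale before invoking \cref{thm:cohomol_fibre}; the $K'\in\mathcal H_\phi$ are conjugates of $\ker\phi|_H$, not $\ker\phi|_H$ itself (cohomology is conjugation-invariant so this is harmless, but should be said); and when you cite the LES of \cref{les for pairs} the peripheral term is a product, not a direct sum, as you wrote in the $k=1$ case (again harmless for vanishing, but worth noting). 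The main missed simplification is the recursive use of the empty-boundary case on the peripheral $\PD^{n-1}_R$-groups, which eliminates the entire lower-range analysis.
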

\begin{proof}
	By \cref{Sikorav hom and cohom}, the fact that $\ker \phi$ is of type $\typeFP_{\lfloor n/2 \rfloor}(R)$ is equivalent to
	\[
	\homol^i(G; \nov R G {\pm \phi}) = \homol_i(G; \nov R G {\pm \phi}) = 0
	\]
	for all $i \leqslant \lfloor n/2 \rfloor$. Poincar\'e duality of $(G,\mathcal H)$ gives
	\[
	\homol_i(G, \mathcal H; \nov R G {\pm \phi}) = 0
	\]
	for all $i \geqslant n - \lfloor n/2 \rfloor = \lceil n/2 \rceil$, see \cref{PD}.
	
	At this point we split the proof into two cases. First, suppose that $\mathcal H = \varnothing$. Since the homology of the pair $(G,\mathcal H)$ coincides with the homology of $G$ in this case, we have established that
	\[
	\homol_i(G; \nov R G {\pm \phi}) = 0
	\] 
	for all $i$. The cohomology statement follows from \cref{flip}, or from \cref{PD}.
	
	Now suppose that $\mathcal H \neq \varnothing$. Since every $H \in\mathcal H$ is itself an orientable $\PD^{n-1}_R$-group by \cref{peripheral PD}, we may apply the previous case and conclude that
		\[
	\homol_i(H; \nov R H {\pm \phi|_H}) = 0
	\]
	for every $H \in \mathcal H$ and every $i$  (we are using the fact that $\phi|_H$ is non-trivial here).

	We claim that $\homol_i(H; \nov{R}{G}{\pm\phi}) = 0$ for all $H \in \mathcal H$ and every $i$ as well. Indeed, the vanishing of $\homol_i(H; \nov R H {\pm \phi|_H})$ for all $i$ is witnessed by chain contractions, which can be viewed as matrices with entries in $\nov R H {\pm \phi|_H}$. But $\nov R H {\pm \phi|_H} \leqslant \nov{R}{G}{\pm\phi}$, so the same chain contractions witness the vanishing of $\homol_i(H; \nov{R}{G}{\pm\phi})$ for all $i$. 
	
	The long exact homology sequence for the pair $(G,\mathcal H)$ now tells us that 	
	\[
	\homol_i(G; \nov R G {\pm \phi}) \cong \homol_i(G, \mathcal H; \nov R G {\pm \phi})
	\]
	for all $i$, and hence both are zero.
The vanishing of Novikov cohomology follows from \cref{flip}.
\end{proof}

 When $\mathcal H = \varnothing$ and $R = \Z$, the following result is a  special case of a theorem of Hillman--Kochloukova~\cite{HillmanKochloukova2007}*{Corollary 6.1}; see also \cite{Bieri1981}*{Theorem 9.11}. Both Hillman--Kochloukova and Bieri use spectral sequences, whereas our proof avoids that, making use of Novikov rings instead.

\begin{thm}
	\label{HK} 
	Let $(G,\mathcal H)$ be an orientable Poincar\'e-duality pair over $R$ in dimension $n$ with a non-trivial homomorphism $\phi \colon G \to \Z$ that does not vanish on any of the subgroups in $\mathcal H$. If $\ker \phi$ is of type $\typeFP_{\lfloor n/2 \rfloor}(R)$ and $\ker \phi|_H$ is of type $\typeFP_{\lfloor (n-1)/2 \rfloor}(R)$ for every $H \in \mathcal H$, then $(\ker \phi, \mathcal H_\phi)$ is an orientable Poincar\'e-duality pair over $R$ in dimension $n-1$.
\end{thm}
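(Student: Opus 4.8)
The plan is to feed \cref{pd half fibring} into \cref{thm:cohomol_fibre} applied to the \emph{pair} $(G,\mathcal H)$, and then to check that the output satisfies the characterisation of orientable $\PD^{n-1}_R$-pairs recorded after the definition, namely that $\homol^i(K,\mathcal H_\phi; RK)$ is concentrated in degree $n-1$ and equals $R$ there (with trivial action), together with the finiteness property $\typeFP(R)$ of the pair $(K,\mathcal H_\phi)$.

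First I would record that all the finiteness hypotheses needed by \cref{pd half fibring} are in place: $\ker\phi$ is of type $\typeFP_{\lfloor n/2\rfloor}(R)$ and, since $\phi|_H$ is non-trivial, $\ker\phi|_H$ is a proper subgroup of $H$ of type $\typeFP_{\lfloor (n-1)/2\rfloor}(R)$ for each $H\in\mathcal H$ (here I use that $H$ is itself an orientable $\PD^{n-1}_R$-group by \cref{peripheral PD}, so a subgroup of infinite index has strictly smaller cohomological dimension, making properness automatic from non-triviality of $\phi|_H$). Thus \cref{pd half fibring} gives $\homol_i(G;\nov R G{\pm\phi}) = \homol^i(G;\nov R G{\pm\phi}) = 0$ for all $i$. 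Via \cref{Sikorav hom and cohom} (or directly via \cref{flip} and \cref{Sikorav}) this upgrades to: $K = \ker\phi$ is of type $\typeFP_n(R)$ in \emph{every} degree, i.e.\ of type $\typeFP_\infty(R)$, and likewise $\ker\phi|_H$ is of type $\typeFP_\infty(R)$ for every $H\in\mathcal H_\phi$. Since $\mathrm{cd}_R K \leqslant \mathrm{cd}_R G = n-1 < \infty$, the group $K$ (and similarly the pair) is in fact of type $\typeFP(R)$; the same argument applied to each peripheral group shows $(K,\mathcal H_\phi)$ is of type $\typeFP(R)$ as a pair.

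Next I would run the long exact sequence of \cref{les}, but for the pair $(G,\mathcal H)$ rather than for $G$ alone, relating $\homol^\bullet(G,\mathcal H; RG)$, $\homol^\bullet(G,\mathcal H; \nov R G\phi \oplus \nov R G{-\phi})$, and $\homol^\bullet(K,\mathcal H_\phi; RK)$. Because $(G,\mathcal H)$ is an orientable $\PD^n_R$-pair, $\homol^i(G,\mathcal H; RG)$ vanishes for $i\neq n$ and equals $R$ (trivial action) for $i = n$ — this is the characterisation of orientable $\PD^n_R$-pairs quoted in \cref{sec PD}. Meanwhile the middle term $\homol^i(G,\mathcal H; \nov R G{\pm\phi})$ vanishes for all $i$: by Poincaré duality for the pair it is dual to $\homol_{n-i}(G;\nov R G{\pm\phi})$, which is zero by \cref{pd half fibring}. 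Feeding these two vanishings into the long exact sequence gives $\homol^{n-1}(K,\mathcal H_\phi; RK) \cong \homol^n(G,\mathcal H; RG) \cong R$ and $\homol^i(K,\mathcal H_\phi; RK) = 0$ for all other $i$; moreover the isomorphism in degree $n-1$ is one of $RK$-modules and the action on the right-hand $R$ is trivial, so the action on $\homol^{n-1}(K,\mathcal H_\phi;RK)$ is trivial as well, i.e.\ the resulting pair is orientable. Combined with the $\typeFP(R)$ conclusion of the previous step, this is exactly the statement that $(K,\mathcal H_\phi)$ is an orientable $\PD^{n-1}_R$-pair.

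I expect the main obstacle to be purely bookkeeping rather than conceptual: namely, being careful that \cref{thm:cohomol_fibre} and \cref{les} are applied in the version for pairs (with $\mathcal H\neq\varnothing$ allowed) and that the identification of restricted coefficient modules $\Delta_{G/\mathcal H}|_{RK} \cong \Delta_{K/\mathcal H_\phi}$ is used correctly, so that the cohomology-at-infinity of the \emph{pair} $(K,\mathcal H_\phi)$ is what appears, not that of $K$ alone. One should also separately dispose of the degenerate case $\mathcal H=\varnothing$, where the argument simplifies since homology of the pair is just homology of $G$, and the orientability of $K$ as a $\PD^{n-1}_R$-group follows from the $RK$-module isomorphism $\homol^{n-1}(K;RK)\cong\homol^n(G;RG)\cong R$ together with triviality of the $G$-action on the latter (orientability of $G$). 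A final small point to check is that the degree count $\lceil n/2\rceil + \lfloor n/2\rfloor = n$ makes the two halves of \cref{pd half fibring} overlap enough to cover all degrees regardless of the parity of $n$ — but this is already handled inside \cref{pd half fibring}, so I would simply cite it.
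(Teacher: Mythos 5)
Your proposal is correct and reaches the same key computation as the paper (via \cref{pd half fibring}, Poincar\'e duality, and \cref{les}, getting $\homol^i(\ker\phi,\mathcal H_\phi;R\ker\phi)$ concentrated in degree $n-1$ with value $R$ and trivial action). Where you diverge is in how you close out. The paper verifies the \emph{original} definition of a $\PD^{n-1}_R$-pair: after establishing $\ker\phi$ is of type $\typeFP(R)$, it applies the $\mathcal H=\varnothing$ case recursively to conclude each $H'\in\mathcal H_\phi$ is an orientable $\PD^{n-2}_R$-group, then runs the long exact cohomology sequence for $(\ker\phi,\mathcal H_\phi)$ to show $\homol^i(\ker\phi;R\ker\phi)$ is concentrated in degree $n-2$ and equals $\Delta_{\ker\phi/\mathcal H_\phi}$. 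You instead invoke the \emph{alternative} characterisation (cohomology of the pair concentrated in one degree with value $R$, plus the pair being of type $\typeFP(R)$), which lets you skip showing the peripheral kernels are full Poincar\'e-duality groups; you only need them to be of type $\typeFP(R)$. This is a genuinely usable shortcut.

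One step you gloss over that deserves a sentence or two: ``the same argument applied to each peripheral group shows $(K,\mathcal H_\phi)$ is of type $\typeFP(R)$ as a pair'' is not literally the same argument. What you actually need is: since $\phi$ does not vanish on any $H\in\mathcal H$ and $\mathcal H$ is finite (by \cref{peripheral PD}), the tuple $\mathcal H_\phi$ is finite; then from the short exact sequence $0 \to \Delta_{K/\mathcal H_\phi} \to \bigoplus_{H'\in\mathcal H_\phi} RK/H' \to R \to 0$, the middle and right terms admit finite resolutions by finitely generated projective $RK$-modules (using that $K$ and each $H'$ are of type $\typeFP(R)$), which forces $\Delta_{K/\mathcal H_\phi}$ to be of type $\typeFP(R)$ as well by the standard two-out-of-three property together with $\mathrm{cd}_R(K)<\infty$. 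The finiteness of $\mathcal H_\phi$ is used in the paper's proof too (to trade a direct product for a direct sum), and you should not leave it implicit. Two minor slips of no consequence: \cref{thm:cohomol_fibre} as stated covers only $\mathcal H=\varnothing$, so it is \cref{les} itself that you invoke for pairs; and properness of $\ker\phi|_H\leqslant H$ is immediate from $\phi|_H\neq 0$ without any appeal to cohomological dimension.
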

\begin{proof}
	The combination of \cref{pd half fibring}, Poincar\'e duality, and \cref{les}  immediately gives an isomorphism of $R \ker \phi$-modules
	\[
	\homol^i(\ker \phi, \mathcal H_\phi; R \ker \phi) \cong \homol^{i+1}(G, \mathcal H; R G) \cong \left\{ \begin{array}{cl} R & \textrm{ if } i = n-1, \\ 0 & \textrm{ otherwise.} \end{array} \right.
	\]

Combining the vanishing of Novikov homology that we  obtained in \cref{pd half fibring} with \cref{Sikorav} immediately yields that $\ker \phi$ is of type $\typeFP_\infty(R)$. 
 Since $G$ is of finite cohomological dimension over $R$, so is $\ker \phi$, and therefore $\ker \phi$ is of type $\typeFP(R)$ by  a suitable adaptation of \cite{Brown1982}*{Proposition VIII.6.1} from $\Z$ to $R$.
 
 We split the proof into two cases. First, if $\mathcal H = \varnothing$,  then the above facts tell us immediately that $\ker \phi$ is an orientable Poincar\'e-duality group over $R$ in dimension $n-1$.
 
 Now let us suppose that $\mathcal H \neq \varnothing$. Since every $H \in \mathcal H$ is an orientable $\PD^{n-1}_R$-group by \cref{peripheral PD}, we may apply the previous case and conclude that $\mathcal H_\phi$ consists solely of orientable $\PD^{n-2}_R$-groups. Hence for every $H' \in \mathcal H_\phi$ we have 
 \[
 	\homol^i(H'; R \ker \phi) \cong \bigoplus_{\ker \phi / H'} \homol^i(H'; R H')\cong \left\{ \begin{array}{cl} R \ker \phi / H' & \textrm{ if } i = n-2, \\ 0 & \textrm{ otherwise.} \end{array} \right.
 \]
 The long exact cohomology sequence for the pair $(\ker \phi,\mathcal H_\phi)$ now tells us that
 \[
  \homol^i(\ker \phi; R \ker \phi) \cong \left\{ \begin{array}{cl}  \Delta_{\ker \phi / \mathcal H_\phi} & \textrm{ if } i = n-2, \\ 0 & \textrm{ otherwise,} \end{array} \right.
 \]
 as required -- note that we had to change a direct product over $\mathcal H_\phi$ to a direct sum. This is possible since $\mathcal H_\phi$ is finite, as $\phi$ does not vanish on any of the groups in $\mathcal H$, and as $ \mathcal H$ is finite by \cref{peripheral PD}.
\end{proof}

Hillman--Kochloukova offer a similar result \cite{HillmanKochloukova2007}*{Corollary 6.1} when $R = \Z$ and $\mathcal H = \varnothing$, where they only require $\ker \phi$ to be of type $\typeFP_{\lfloor (n-1)/2 \rfloor}(R)$, and additionally ask for the Euler characteristic of $G$ to be zero. In fact, it  is not difficult to see that the additional assumption $\chi(G) = 0$ is equivalent to requiring that $\homol_{\lfloor m/2 \rfloor}(G;\nov{\Z}{G}{\pm\phi}) = 0$ using the fact that $\nov{\Z}{G}{\pm\phi}$ is von Neumann finite by \cite{Kochloukova2006} (see the proof of \cite{HillmanKochloukova2007}*{Theorem 6}). Note that we will only use the stated version of \cref{HK} in our proof of \cref{thm:PD_pair_fibring} below.

\begin{thm}\label{thm:PD_pair_fibring}
	Let $\mathcal K$ be a small set of fields.
	Let $(G,\mathcal H)$ be an orientable $\PD^n_{\mathcal K}$-pair, and suppose that $G$ is RFRS. The following are equivalent:
	\begin{enumerate}
		\item\label{item:poincare_fibre_pairs} There exists a finite-index subgroup $G_0 \leqslant G$ and an epimorphism $\phi \colon G_0 \to \Z$ that does not vanish on any of the groups in $\mathcal H_{G_0}$ such that $(\ker \phi,  (\mathcal  H_{G_0})_\phi)$  is an orientable $\PD^{n-1}_\K$-pair for every $\K \in \mathcal K$.
		\item\label{item:vanish_modp_pairs} We have $\betti{G}{i}{\K} = 0$ for all $i$ and all $\K \in \mathcal K$. 
		\item\label{item:weak_vanishing_pairs} We have $\betti{G}{i}{\K} = 0$ for all $i \leqslant \lfloor n/2 \rfloor$ and all $\K \in \mathcal K$, and  $\betti{H}{i}{\K} = 0$ for all $i\leqslant \lfloor (n-1)/2 \rfloor$, $\K \in \mathcal K$, and $H \in \mathcal H$.
	\end{enumerate}
\end{thm}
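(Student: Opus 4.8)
The plan is to prove the cyclic chain of implications \eqref{item:poincare_fibre_pairs} $\Rightarrow$ \eqref{item:vanish_modp_pairs} $\Rightarrow$ \eqref{item:weak_vanishing_pairs} $\Rightarrow$ \eqref{item:poincare_fibre_pairs}. Before starting, I would record three facts used throughout: $G$ and every $H\in\mathcal H$ are RFRS, being subgroups of $G$, so the relevant $L^2$-Betti numbers are defined and $\D_{\K H}\subseteq\D_{\K G}$ for each $H\in\mathcal H$; by \cref{peripheral PD} the tuple $\mathcal H$ is finite and each $H\in\mathcal H$ is an orientable $\PD^{n-1}_\K$-group, in particular of type $\typeFP(\mathcal K)$, for every $\K\in\mathcal K$; and for a finite-index subgroup $G_0\leqslant G$ the pair $(G_0,\mathcal H_{G_0})$ is again an orientable $\PD^n_\K$-pair for every $\K\in\mathcal K$ (we only ever pass to finite-index \emph{subgroups}, so commutativity of $\K$ suffices). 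The implication \eqref{item:poincare_fibre_pairs} $\Rightarrow$ \eqref{item:vanish_modp_pairs} is then quick: for each $\K\in\mathcal K$, the group $\ker\phi$ is an orientable $\PD^{n-1}_\K$-group, hence of type $\typeFP(\K)$, so the Mapping Torus Theorem (\cref{mapping torus}) gives $\betti{G_0}{i}{\K}=0$ for all $i$, and \cref{betti nrs scaling} promotes this to $\betti{G}{i}{\K}=0$ for all $i$.

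For \eqref{item:vanish_modp_pairs} $\Rightarrow$ \eqref{item:weak_vanishing_pairs} only the statement about the peripheral subgroups needs an argument, the one about $G$ being a weakening of the hypothesis. Fix $\K\in\mathcal K$ and write $\D=\D_{\K G}$; since $\D$ is a skew-field, \eqref{item:vanish_modp_pairs} says $\homol_i(G;\D)=0$ for all $i$. As $G$ is of type $\typeFP(\K)$, vanishing of $\homol_\bullet(G;\D)$ is equivalent to vanishing of $\homol^\bullet(G;\D)$ — e.g.\ by the argument of \cref{prop:nov_homol_cohomol}, which applies verbatim with $\D$ in place of a Novikov ring (a finite resolution $P_\bullet\to\K$ of the trivial module by finitely generated projective $\K G$-modules yields a bounded acyclic, hence contractible, complex $\D\otimes_{\K G}P_\bullet$ of free $\D$-modules, whose $\D$-dual computes $\homol^\bullet(G;\D)$). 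Poincar\'e duality of the pair (\cref{PD}) then gives $\homol_i(G,\mathcal H;\D)\cong\homol^{n-i}(G;\D)=0$ for all $i$, and the long exact sequence in homology of the pair $(G,\mathcal H)$ — relating $\homol_\bullet(G,\mathcal H;\D)$, $\homol_\bullet(G;\D)$ and $\bigoplus_{H\in\mathcal H}\homol_\bullet(H;\D)$ — forces $\homol_i(H;\D)=0$ for every $H\in\mathcal H$ and all $i$. Finally $\D$ is free, hence faithfully flat, over the sub-skew-field $\D_{\K H}$, so flat base change along $\D_{\K H}\hookrightarrow\D$ gives $\homol_i(H;\D)\cong\D\otimes_{\D_{\K H}}\homol_i(H;\D_{\K H})$, whence $\betti{H}{i}{\K}=0$ for all $i$, in particular for $i\leqslant\lfloor(n-1)/2\rfloor$.

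For \eqref{item:weak_vanishing_pairs} $\Rightarrow$ \eqref{item:poincare_fibre_pairs} I would feed \cref{l2 all chars} into \cref{HK}. Apply \cref{l2 all chars} to the RFRS group $G$, the finite tuple $\mathcal H$, and the thresholds $\lfloor n/2\rfloor$ for $G$ and $\lfloor(n-1)/2\rfloor$ for the members of $\mathcal H$: the required finiteness properties hold by the facts recorded above, and the homological hypotheses are exactly \eqref{item:weak_vanishing_pairs}. This produces a finite-index subgroup $G_0\leqslant G$ and an epimorphism $\phi\colon G_0\to\Z$ that does not vanish on any subgroup in $\mathcal H_{G_0}$, such that for every $\K\in\mathcal K$ the kernel $\ker\phi$ is of type $\typeFP_{\lfloor n/2\rfloor}(\K)$ and $\ker\phi|_{H'}$ is of type $\typeFP_{\lfloor(n-1)/2\rfloor}(\K)$ for each $H'\in\mathcal H_{G_0}$. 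Since $(G_0,\mathcal H_{G_0})$ is an orientable $\PD^n_\K$-pair, \cref{HK} (with $R=\K$) applies and shows that $(\ker\phi,(\mathcal H_{G_0})_\phi)$ is an orientable $\PD^{n-1}_\K$-pair; as this holds for every $\K\in\mathcal K$, we obtain \eqref{item:poincare_fibre_pairs}.

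The genuinely delicate step is \eqref{item:vanish_modp_pairs} $\Rightarrow$ \eqref{item:weak_vanishing_pairs}: one has to pass from vanishing of $L^2$-homology to vanishing of $L^2$-cohomology over $\D_{\K G}$ (harmless over a skew-field, but genuinely using that $G$ is of type $\typeFP(\K)$), invoke Poincar\'e duality of the \emph{pair} rather than of a group, and then transport the conclusion from $\D_{\K G}$ down to the sub-skew-field $\D_{\K H}$ by faithful flatness. The two fibring directions are, by comparison, assembly of tools already in hand: the substance of \eqref{item:weak_vanishing_pairs} $\Rightarrow$ \eqref{item:poincare_fibre_pairs} has been absorbed into \cref{l2 all chars} and into the ``half lives, half dies'' result \cref{HK} (which itself rests on \cref{pd half fibring} and Poincar\'e duality), the thresholds $\lfloor n/2\rfloor$ and $\lfloor(n-1)/2\rfloor$ in \eqref{item:weak_vanishing_pairs} being dictated precisely by the hypotheses of \cref{HK}. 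One should also keep in mind the two conventions for $\typeFP_\bullet(\mathcal K)$ and $\PD^\bullet_{\mathcal K}$ depending on whether $\mathcal K$ is finite or infinite, but in both cases the finiteness inputs required by \cref{l2 all chars} are supplied by the Poincar\'e-duality hypothesis on $(G,\mathcal H)$.
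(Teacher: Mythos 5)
Your proof is correct and follows essentially the same route as the paper: the same cycle of implications, the same pair \cref{mapping torus} plus \cref{betti nrs scaling} for \eqref{item:poincare_fibre_pairs}~$\Rightarrow$~\eqref{item:vanish_modp_pairs}, and the same reduction \eqref{item:weak_vanishing_pairs}~$\Rightarrow$~\eqref{item:poincare_fibre_pairs} via \cref{l2 all chars} feeding into \cref{HK}. In the middle implication the paper works throughout in cohomology (deducing $\homol^i(G,\mathcal H;\D_{\K G})=0$ from $\homol_i(G;\D_{\K G})=0$ by duality, then using the cohomological long exact sequence of \cref{les for pairs}, flatness, and finally Poincar\'e duality of $H$ to read off $\betti{H}{i}{\K}$), whereas you pass from cohomology vanishing of $G$ to $\homol_i(G,\mathcal H;\D_{\K G})=0$ by the other direction of duality and then use the homological long exact sequence, which lets you read off $\betti{H}{i}{\K}=0$ directly after base-changing from $\D_{\K G}$ to $\D_{\K H}$ and saves the final appeal to Poincar\'e duality of $H$ -- a harmless and mildly shorter rearrangement of the same ingredients.
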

\begin{proof}
	We will prove a cycle of implications.
	\begin{enumerate}
	\item[\eqref{item:poincare_fibre_pairs} $\Rightarrow$ \eqref{item:vanish_modp_pairs}] In this case, $G$  virtually algebraically fibres with kernel of type $\typeFP(\K)$ for all $\K \in \mathcal K$, so item (\ref{item:vanish_modp_pairs}) follows by \cref{mapping torus,betti nrs scaling}.
		
		\item[\eqref{item:vanish_modp_pairs} $\Rightarrow$\eqref{item:weak_vanishing_pairs}] The assumption implies that $\homol^i(G,\mathcal H;\D_{\K G}) = 0$ for all $i$ by Poincar\'e duality. The vanishing of homology of $G$ with coefficients in $\D_{\K G}$ gives the vanishing of cohomology, for example using chain contractions. The long exact sequence of \cref{les for pairs} now gives vanishing of the cohomology of $H$ with coefficients in $\D_{\K G}$. Since $\D_{\K G}$ contains $\D_{\K H}$, and since skew-fields are flat over one another, this gives vanishing of the cohomology of $H$ with coefficients in $\D_{\K H}$. Poincar\'e duality of $H$ now gives		
		 $\betti{H}{i}{\K} = 0$ for all $i$ and all $\K$, showing that item (\ref{item:weak_vanishing_pairs}) holds.
		
		\item[\eqref{item:weak_vanishing_pairs}$\Rightarrow$ \eqref{item:poincare_fibre_pairs}] 
		By \cref{peripheral PD}, the tuple $\mathcal H$ is finite, and it consists of groups of type $\typeFP(\mathcal K)$.
		\cref{l2 all chars} gives us a finite-index subgroup $G_0$ and an epimorphism $\phi \colon G_0 \to \Z$ that does not vanish on any of the subgroups in $\mathcal H_{G_0}$, such that $\ker \phi$ is of type $\typeFP_{\lfloor n/2 \rfloor}(\K)$, and $\ker \phi|_{H'}$ is of type $\typeFP_{\lfloor (n-1)/2 \rfloor}(\K)$ for every $H' \in \mathcal H_{G_0}$ and every $\K \in \mathcal K$. Also, $(G_0, \mathcal H_{G_0})$ is an orientable $\PD^n_\K$-pair  for every $\K \in \mathcal K$; indeed, the cohomology of $G_0$ with coefficients in $\K G_0$ behaves as required by Shapiro's lemma, and $G_0$ is of type $\typeFP(\K)$, being a finite-index subgroup of $G$. Now \cref{HK}  finishes the argument. \qedhere
	\end{enumerate}
\end{proof}

Taking $\mathcal H = \varnothing$  and $\mathcal K$ to be a single field immediately implies our first main result.

\begin{mainthm}[\ref{main pd single field}]
	Let $\K$ be a field and $G$ be a RFRS orientable Poincar\'e-duality group of dimension $n>0$ over $\K$. The following are equivalent:
	\begin{enumerate}
		\item There exists a finite-index subgroup $G_0$ in $G$ and an epimorphism $\phi \colon G_0 \to \Z$ such that $\ker \phi$ is an orientable Poincar\'e-duality group of dimension $n-1$ over $\K$;
		\item For every $i \leqslant n$ we have $\betti{G}{i}{\K} = 0$.
	\end{enumerate}
\end{mainthm}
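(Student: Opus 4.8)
The plan is to read off \cref{main pd single field} as the instance of \cref{thm:PD_pair_fibring} in which the peripheral tuple $\mathcal H$ is empty and $\mathcal K = \{\K\}$ is a single field. First I would unwind the conventions for Poincaré-duality pairs: $(G,\varnothing)$ is an orientable $\PD^n_\K$-pair precisely when $G$ is an orientable $\PD^n_\K$-group, and for a finite-index subgroup $G_0 \leqslant G$ and an epimorphism $\phi \colon G_0 \to \Z$ the pair $(\ker\phi, (\mathcal H_{G_0})_\phi) = (\ker\phi,\varnothing)$ is an orientable $\PD^{n-1}_\K$-pair exactly when $\ker\phi$ is an orientable $\PD^{n-1}_\K$-group; the hypothesis that $\phi$ not vanish on the groups in $\mathcal H_{G_0}$ is then vacuous. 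A singleton is a small set of fields, so \cref{thm:PD_pair_fibring} applies verbatim, and its conditions specialise to condition (1) of \cref{main pd single field} and to the assertion that $\betti{G}{i}{\K}$ vanishes for all $i$.

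Second, I would bridge the small gap between ``for all $i$'' and ``for all $i \leqslant n$'': since $G$ is an orientable $\PD^n_\K$-group we have $\mathrm{cd}_\K(G) = n$, so a projective resolution of $\K$ of length $n$ forces $\homol_i(G;M) = 0$ for every $\K G$-module $M$ and all $i > n$; taking $M = \D_{\K G}$ shows $\betti{G}{i}{\K} = 0$ automatically for $i > n$. Hence condition (2) of \cref{main pd single field} is literally equivalent to condition (\ref{item:vanish_modp_pairs}) of \cref{thm:PD_pair_fibring}, and the equivalence of (1) and (2) follows at once. For the direction (1) $\Rightarrow$ (2) one does not even need the full theorem: a $\PD^{n-1}_\K$ kernel is of type $\typeFP(\K)$, so \cref{mapping torus} kills all $L^2$-Betti numbers of $G_0$ over $\K$, and \cref{betti nrs scaling} transfers the vanishing to $G$.

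There is no obstacle particular to this statement; all the work lives in \cref{thm:PD_pair_fibring} and the results feeding it --- in the single-field case essentially \cref{l2 all chars}, which supplies a virtual fibring with a low-dimensional finiteness property, and \cref{HK}, which upgrades that fibre to a Poincaré-duality group in one lower dimension via the vanishing of Novikov (co)homology established in \cref{pd half fibring}. The only point requiring a moment's care here is the bookkeeping around the empty-tuple conventions for Poincaré-duality pairs and the cohomological-dimension observation of the previous paragraph.
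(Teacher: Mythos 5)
Your proposal is correct and is exactly how the paper derives Theorem~\ref{main pd single field}: immediately after the proof of \cref{thm:PD_pair_fibring}, the paper states that taking $\mathcal H = \varnothing$ and $\mathcal K$ a single field gives the result, with no further argument. Your extra remarks — unwinding the $\mathcal H = \varnothing$ conventions, and using $\mathrm{cd}_\K(G) = n$ to reconcile ``for all $i$'' with ``for all $i \leqslant n$'' — are accurate and merely make explicit two small points the paper leaves implicit.
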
 

Being a Poincar\'e-duality group over a prime field immediately implies being a Poincar\'e-duality group over all fields of the same characteristic \cite{DicksDunwoody1989book}*{Proposition V.3.7}. 

Taking $\mathcal H = \varnothing$ and $\mathcal K$ to be the set of prime fields yields the version that deals with all fields simultaneously.  

\begin{mainthm}[\ref{main pd}]
	Let $G$ be a RFRS orientable Poincar\'e-duality group of dimension $n>0$. The following are equivalent:
	\begin{enumerate}
		\item There exists a finite-index subgroup $G_0$ in $G$ and an epimorphism $\phi \colon G_0 \to \Z$ such that $\ker \phi$ is an orientable  Poincar\'e-duality group of dimension $n-1$ over all fields;
		\item For all $i \leqslant n$ and all prime fields $\K$, we have $\betti{G}{i}{\K} = 0$.
	\end{enumerate}
\end{mainthm}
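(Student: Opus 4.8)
The plan is to deduce the statement directly from \cref{thm:PD_pair_fibring}, applied with $\mathcal H = \varnothing$ and with $\mathcal K$ taken to be the set of all prime fields. First I would verify the hypotheses of that theorem: the set of prime fields contains $\Q$ and exactly one field, namely $\F_p$, of each positive characteristic $p$, so it is small; and since this $\mathcal K$ is infinite, our conventions make an orientable $\PD^n_{\mathcal K}$-pair with empty peripheral structure the same thing as an orientable $\PD^n$-group over $\Z$. Together with the RFRS hypothesis, this is precisely what \cref{thm:PD_pair_fibring} assumes of $(G, \varnothing)$.

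Next I would translate the equivalent conditions of \cref{thm:PD_pair_fibring} into the language of the present statement. Since $\mathcal H = \varnothing$, we have $(\mathcal H_{G_0})_\phi = \varnothing$ for every finite-index subgroup $G_0 \leqslant G$ and every homomorphism $\phi$, so condition \eqref{item:poincare_fibre_pairs} of \cref{thm:PD_pair_fibring} asserts exactly that some finite-index subgroup $G_0 \leqslant G$ admits an epimorphism $\phi \colon G_0 \to \Z$ whose kernel is an orientable $\PD^{n-1}_\K$-group for every prime field $\K$. By \cite{DicksDunwoody1989book}*{Proposition V.3.7}, being an orientable $\PD^{n-1}$-group over a prime field is equivalent to being one over every field of the same characteristic, so this is the same as $\ker\phi$ being an orientable $\PD^{n-1}$-group over all fields, i.e.\ condition (1) above; conversely, ``all fields'' includes ``all prime fields'', so the two are equivalent. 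Similarly, condition \eqref{item:vanish_modp_pairs} of \cref{thm:PD_pair_fibring} reads $\betti{G}{i}{\K} = 0$ for all $i$ and all prime fields $\K$; since $G$ is $\PD^n$ over $\Z$ we have $\operatorname{cd}_\K(G) \leqslant \operatorname{cd}(G) = n$ for every field $\K$, so $\homol_i(G;\D_{\K G}) = 0$ whenever $i > n$, and this condition is therefore equivalent to condition (2) above.

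I do not expect a genuine obstacle here: once \cref{thm:PD_pair_fibring} is available, the entire content is the bookkeeping just described --- collapsing the ``pair'' formulation to a ``group'' formulation when $\mathcal H = \varnothing$, passing between prime fields and arbitrary fields via \cite{DicksDunwoody1989book}*{Proposition V.3.7} on the fibring side, and trimming the range of degrees using the cohomological-dimension bound on the $L^2$-Betti-number side. All of the substantive input --- the virtual algebraic fibring machinery, the vanishing of Novikov cohomology, and the half-dimensional fibring criterion \cref{HK} --- is already packaged inside \cref{thm:PD_pair_fibring}.
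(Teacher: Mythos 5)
Your proposal is correct and follows the same route as the paper: apply \cref{thm:PD_pair_fibring} with $\mathcal H = \varnothing$ and $\mathcal K$ the set of prime fields, then use \cite{DicksDunwoody1989book}*{Proposition V.3.7} to pass from prime fields to arbitrary fields on the fibring side. You are slightly more explicit than the paper in verifying smallness of $\mathcal K$, in noting that the infinite-$\mathcal K$ convention turns $\PD^n_{\mathcal K}$ into $\PD^n$ over $\Z$, and in using the bound $\operatorname{cd}_\K(G) \leqslant n$ to reconcile ``all $i$'' with ``$i \leqslant n$'', but these are exactly the bookkeeping points the paper leaves implicit.
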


\subsection{Low dimensions}

In dimension three, \cref{thm:PD_pair_fibring} can give a topological conclusion.

\begin{thm}
	Let $(G,\mathcal H)$ be a $\PD^3$-pair and suppose that $G$ is RFRS.  The following are equivalent:
\begin{enumerate}
	\item\label{item:PD3_kernel} There exists a finite-index subgroup $G_0 \leqslant G$ such that $G_0$ is the fundamental group of a fibred three-manifold, and $\mathcal H_{G_0}$ is the collection of fundamental groups of the boundary components of the manifold;
	\item\label{item:b1zero} We have $\betti{G}{1}{\Q} =  0$.
\end{enumerate}
\end{thm}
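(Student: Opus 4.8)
I would prove the two implications separately; the forward implication $(\ref{item:PD3_kernel})\Rightarrow(\ref{item:b1zero})$ is routine. If $G_0\le G$ has finite index and equals $\pi_1(M)$ for a compact three-manifold $M$ fibring over $S^1$, then $M$ has a compact surface fibre $N$, giving a short exact sequence $\1\to\pi_1(N)\to G_0\to\Z\to\1$ with $\pi_1(N)$ of type $\typeFP$. Since $G_0$ is RFRS, being finite-index in the RFRS group $G$, \cref{mapping torus} gives $\betti{G_0}{i}{\Q}=0$ for all $i$, and \cref{betti nrs scaling} then gives $\betti{G}{1}{\Q}=0$.

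For $(\ref{item:b1zero})\Rightarrow(\ref{item:PD3_kernel})$ I would first replace $G$ by the subgroup of index at most two on which the orientation action is trivial; by \cref{betti nrs scaling} this preserves the hypothesis, it does not affect the conclusion, and it lets us assume $(G,\mathcal H)$ is an orientable $\PD^3$-pair over $\Z$, hence also over $\Q$. Write $\D=\D_{\Q G}$. As $G$ is infinite, $\homol_0(G;\D)=0$; together with $\homol_1(G;\D)=0$ (which is $\betti{G}{1}{\Q}=0$) this produces a length-one partial chain contraction of $\D\otimes_{\Q G}C_\bullet$ for a free $\Q G$-resolution $C_\bullet$ of $\Q$, and dualising it gives $\homol^0(G;\D)=\homol^1(G;\D)=0$. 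Poincar\'e duality for the pair (\cref{PD}) identifies $\homol^2(G,\mathcal H;\D)$ with $\homol_1(G;\D)=0$, so in the long exact sequence of \cref{les for pairs} the segment $\homol^1(G;\D)\to\prod_{H\in\mathcal H}\homol^1(H;\D)\to\homol^2(G,\mathcal H;\D)$ forces $\homol^1(H;\D)=0$ for every $H\in\mathcal H$. By \cref{peripheral PD} each such $H$ is an orientable $\PD^2$-group over $\Z$, hence of type $\typeFP$, so flatness of skew-fields over one another gives $\homol^1(H;\D_{\Q H})=0$, and Poincar\'e duality for $H$ turns this into $\betti{H}{1}{\Q}=0$ (so, incidentally, $H\cong\Z^2$). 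Combined with $\betti{G}{0}{\Q}=\betti{G}{1}{\Q}=0$, this is precisely item~\ref{item:weak_vanishing_pairs} of \cref{thm:PD_pair_fibring} with $n=3$ and $\mathcal K=\{\Q\}$; that theorem then supplies a finite-index subgroup $G_0\le G$ and an epimorphism $\phi\colon G_0\to\Z$, not vanishing on any member of $\mathcal H_{G_0}$, such that $(K,\mathcal C):=(\ker\phi,(\mathcal H_{G_0})_\phi)$ is an orientable $\PD^2_\Q$-pair.

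It then remains to realise this algebraic fibring topologically. Here $K$ is finitely generated (being $\typeFP_1(\Q)$), each member of $\mathcal C$ is an orientable $\PD^1_\Q$-group by \cref{peripheral PD} and hence infinite cyclic, and $(K,\mathcal C)$ is an orientable two-dimensional Poincar\'e-duality pair over $\Q$; by the recognition of surface groups among such pairs (Eckmann--M\"uller--Linnell over $\Z$, together with Bowditch's strengthening, which works over $\Q$ and without finite-presentation hypotheses), the pair $(K,\mathcal C)$ is the fundamental-group pair of a compact orientable surface $\Sigma$, closed when $\mathcal H=\varnothing$ and otherwise with $\mathcal C$ the conjugacy classes of the boundary circles. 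Conjugation by a lift $t\in G_0$ of $1\in\Z$ is an automorphism $\alpha$ of the pair $(K,\mathcal C)$ (since $\phi$ is unchanged by this conjugation), well defined up to $\operatorname{Inn}(K)$, so by the Dehn--Nielsen--Baer theorem for compact surfaces its outer class is induced by a homeomorphism $f\colon\Sigma\to\Sigma$. The mapping torus $M=M_f$ is then a compact aspherical three-manifold with $\pi_1(M)\cong K\rtimes_\alpha\Z$ and $\partial M$ a disjoint union of tori and Klein bottles. Since the extension $\1\to K\to G_0\to\Z\to\1$ is determined by its action on $K$ together with a class in $\homol^2(\Z;Z(K))$, and $Z(K)=1$ as soon as $K$ is non-elementary, we get $G_0\cong\pi_1(M)$; the finitely many cases with $K$ abelian (so $\Sigma$ a disc, annulus, or torus) give groups directly recognised as fundamental groups of fibred Seifert three-manifolds. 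Under this identification $\mathcal H_{G_0}$ is the tuple of fundamental groups of the components of $\partial M$, so $G_0=\pi_1(M)$ with $M$ fibred, as wanted.

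The main obstacle is this last step. The passage from a $\PD^2$-pair over $\Q$ to an honest surface is not covered by Eckmann--M\"uller--Linnell, which is integral, so one needs Bowditch's sharper recognition theorem -- or one must first verify that $(K,\mathcal C)$ is already a $\PD^2$-pair over $\Z$, for instance by bounding $\mathrm{cd}_\Z(K)$ via Strebel's theorem, observing that $K$ is one-ended, and using that $\mathcal C$ is finite. The remaining topology -- realising the algebraic monodromy by a surface homeomorphism, matching the extension class, and bookkeeping the boundary components together with the small abelian fibres -- is classical, but has to be assembled with some care.
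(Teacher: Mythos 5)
Your proposal is correct and follows essentially the same route as the paper's sketch: derive vanishing of all $L^2$-Betti numbers of $G$ and its peripherals from $\betti{G}{1}{\Q}=0$ via Poincar\'e duality and \cref{les for pairs}, apply \cref{thm:PD_pair_fibring} to produce a virtual $\PD^2_\Q$-pair fibre, and then invoke the recognition of surface pairs (Bowditch, Dicks--Dunwoody) and the realisation of surface-by-$\Z$ groups as fibred three-manifold groups. The paper defers the final topological bookkeeping -- Dehn--Nielsen--Baer, matching the extension, the low-genus exceptions -- to \cite{Bridsonetal2025} rather than spelling it out as you do, but the argument is the same.
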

\begin{proof}[Sketch proof]
Fibred three-manifolds have vanishing first $L^2$-Betti numbers, so it is clear that item (\ref{item:PD3_kernel}) implies item (\ref{item:b1zero}). Thus, we focus on the converse implication.

If $\betti G 1 \Q = 0$, then $\homol^2(G,\mathcal H;\D_{\Q G}) = 0$ by Poincar\'e duality. Moreover, the groups $H \in \mathcal H$ are all $\PD^2$ groups. This implies that $\homol^2(H;\D_{\Q G})$ vanishes for all $H \in \mathcal H$. The long exact sequence of \cref{les for pairs} then implies that $\betti G 2 \Q = 0$, since vanishing of $L^2$-homology in a degree is equivalent to the vanishing of the $L^2$-cohomology in the same degree; hence $\betti G i \Q = 0$ for all $i$. By \cref{thm:PD_pair_fibring}, there exists a finite-index subgroup $G_0 \leqslant G$ and an epimorphism $\phi \colon G_0 \rightarrow \Z$ with $\left(\ker \phi, (\mathcal H_{G_0})_\phi\right)$ being an orientable $\PD^2_\Q$-pair.

Every such pair is geometric by \cite{DicksDunwoody1989book}*{Theorem V.10.2} if $\mathcal H \neq \varnothing$. If $\mathcal H = \varnothing$, then by a result of Bowditch \cite{Bowditch2004} we know that $\ker \phi$ is virtually the fundamental group of a closed orientable surface of genus at least one; using the Nielsen realisation theorem, we obtain that $\ker \phi$ itself is  the fundamental group of a closed orientable surface of genus at least one.
 
In either event, $\ker \phi$ is a surface group, and if $\mathcal H \neq \varnothing$ then the surface has boundary and the action of $\Z = \im \phi$ on $\ker \phi$ preserves the peripheral structure which is encoded in $(\mathcal H_{G_0})_\phi$.
 
Orientable surface-by-$\Z$ groups are always realised as fundamental groups of fibred $3$-manifolds \cites{Nielsen1927,Zieschangetal1980}, and therefore $G$ is virtually a $3$-manifold group. The details of this argument can be found in \cite{Bridsonetal2025}.\qedhere
\end{proof}

We also offer the following result in dimension four, where we can conclude that the kernel of the virtual algebraic fibration is in fact a Poincar\'e-duality group over $\Z$.

\begin{thm}
	Let $(G,\mathcal H)$ be a $\PD^4$-pair and suppose that $G$ is RFRS. The following are equivalent:
	\begin{enumerate}
		\item\label{item:PD4_kernel} There exist a finite-index subgroup $G_0 \leqslant G$ and an epimorphism $\phi \colon G_0 \rightarrow \Z$ that does not vanish on any of the groups in $\mathcal H_{G_0}$ such that $(\ker \phi, (\mathcal H_{G_0})_\phi)$ is an orientable $\PD^3$-pair.
		\item\label{item:b1b2zero} We have $\betti{G}{1}{\Q} = \betti{G}{2}{\Q} = 0$.
	\end{enumerate}
\end{thm}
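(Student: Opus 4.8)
The plan is to prove the cycle $(1) \Rightarrow (2) \Rightarrow (1)$. The implication $(1) \Rightarrow (2)$ is routine: a virtual algebraic fibre that is an orientable $\PD^3$-pair over $\Z$ is of type $\typeFP(\Z)$, hence of type $\typeFP_n(\Q)$ for every $n$, so the Mapping Torus Theorem (\cref{mapping torus}) gives $\betti{G_0}{i}{\Q} = 0$ for all $i$, and \cref{betti nrs scaling} transfers this to $G$. For the converse I would begin by replacing $G$ with the kernel of its orientation character, so that $(G, \mathcal H)$ becomes an \emph{orientable} $\PD^4$-pair over $\Z$; this costs nothing, since that kernel has index at most two, the finite-index overgroup statement in (1) is unaffected, and hypothesis (2) descends to finite-index subgroups by \cref{betti nrs scaling}.

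The next step is to boost hypothesis (2) to all degrees over $\Q$. By \cref{PD} one has $\homol^i(G, \mathcal H; \D_{\Q G}) \cong \homol_{4-i}(G; \D_{\Q G})$, and over the skew-field $\D_{\Q G}$ the vanishing of $L^2$-homology in a given degree is equivalent to the vanishing of $L^2$-cohomology in that degree. Since $G$ and every $H \in \mathcal H$ are infinite --- each $H$ being an orientable $\PD^3$-group by \cref{peripheral PD} --- the long exact sequence of the pair (\cref{les for pairs}) together with $\betti{G}{1}{\Q} = \betti{G}{2}{\Q} = 0$ forces $\betti{G}{i}{\Q} = 0$ in every degree: degrees $0,1,2$ are the hypothesis (degree $0$ because $G$ is infinite), and degrees $3,4$ follow from the duality isomorphism and the long exact sequence. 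Running \cref{les for pairs} in the other direction and using the flatness of $\D_{\Q H}$ over $\D_{\Q G}$ then yields $\betti{H}{i}{\Q} = 0$ for all $i$ and all $H \in \mathcal H$, exactly as in the step $(2) \Rightarrow (3)$ of \cref{thm:PD_pair_fibring}.

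The heart of the matter is to promote these vanishings to every prime field, and this is where dimension four is decisive. Being of type $\typeFP_1$ over a field is equivalent to being finitely generated, so by \cref{fibring thm} the vanishing of the first $L^2$-Betti number of a RFRS group is independent of the field; hence $\betti{G}{1}{\F_p} = 0$ and $\betti{H}{1}{\F_p} = 0$ for every prime $p$ and every $H \in \mathcal H$. As each $H$ is an infinite orientable $\PD^3$-group, Poincar\'e duality gives $\betti{H}{i}{\F_p} = 0$ in all degrees. Repeating the duality-and-long-exact-sequence computation of the previous paragraph over $\F_p$ now yields $\betti{G}{i}{\F_p} = 0$ for $i \in \{0, 1, 3, 4\}$ and (trivially) for $i > 4$, leaving only the middle degree $\lfloor 4/2 \rfloor = 2$. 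But $G$ is of type $\typeFP$, so $\sum_i (-1)^i \betti{G}{i}{\K}$ equals the Euler characteristic $\chi(G)$, which does not depend on the field $\K$; and $\chi(G) = 0$ because all rational $L^2$-Betti numbers of $G$ vanish, so $\betti{G}{2}{\F_p} = 0$ as well. In higher even dimensions this last manoeuvre fails, since one would then need $\betti{G}{i}{\F_p} = 0$ for every $2 \leqslant i \leqslant \lfloor n/2 \rfloor$, and that no longer follows from first-degree data and duality.

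At this point $\betti{G}{i}{\K} = 0$ for all $i \leqslant 2$ and $\betti{H}{i}{\K} = 0$ for all $i \leqslant 1$ over every prime field $\K$, and $\mathcal H$ is finite by \cref{peripheral PD}. I would then apply \cref{l2 all chars} with $\mathcal K$ the (small) set of all prime fields, $n = 2$ and $m = 1$, to obtain a finite-index subgroup $G_0 \leqslant G$ and an epimorphism $\phi \colon G_0 \to \Z$ not vanishing on any member of $\mathcal H_{G_0}$, with $\ker \phi$ of type $\typeFP_2$ over $\Q$ and over every $\F_p$, and $\ker \phi|_{H'}$ finitely generated for every $H' \in \mathcal H_{G_0}$. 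Since a group that is of type $\typeFP_k$ over $\Q$ and over $\F_p$ for every prime $p$ is of type $\typeFP_k(\Z)$ --- a standard homological fact, for which \cref{prop:large_primes} guarantees that only finitely many primes are really involved --- the group $\ker\phi$ is of type $\typeFP_2(\Z)$ and each $\ker\phi|_{H'}$ is of type $\typeFP_1(\Z)$. Finally, $(G_0, \mathcal H_{G_0})$ is again an orientable $\PD^4$-pair over $\Z$, since $G_0$ has finite index in $G$ (cf.\ the proof of \cref{thm:PD_pair_fibring}), and $\lfloor 4/2 \rfloor = 2$ while $\lfloor 3/2 \rfloor = 1$; so \cref{HK} applies with $R = \Z$ and shows that $(\ker \phi, (\mathcal H_{G_0})_\phi)$ is an orientable $\PD^3$-pair over $\Z$, which is statement (1). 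I expect the passage from field coefficients to $\Z$ to be the main obstacle --- both the Euler-characteristic argument yielding $\betti{G}{2}{\F_p} = 0$ and, more fundamentally, the need to invoke \cref{HK} over the ring $\Z$ rather than over a field, which is exactly why the sharpening is confined to dimension four.
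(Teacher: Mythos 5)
Your handling of $(1) \Rightarrow (2)$, the orientability reduction, and the bootstrap from $\betti{G}{1}{\Q} = \betti{G}{2}{\Q} = 0$ to the vanishing of all rational $L^2$-Betti numbers of $G$ and of every peripheral subgroup all match the paper's argument. The field-independence of first-degree vanishing (via finite generation and \cref{fibring thm}) and your Euler-characteristic trick yielding $\betti{G}{2}{\F_p} = 0$ are correct and are a genuinely different way to reach the vanishing of $\betti{G}{i}{\F_p}$ in all degrees, which the paper does not do. Up to this point your argument is sound, if more roundabout.

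The gap is the final step. You assert that a group of type $\typeFP_k$ over $\Q$ and over $\F_p$ for every prime $p$ is of type $\typeFP_k(\Z)$, calling this a standard homological fact. It is not, and the Novikov-homology translation shows why it should be doubted: running the Bockstein sequence associated to $0 \to \nov{\Z}{G_0}{\pm\phi} \xrightarrow{p} \nov{\Z}{G_0}{\pm\phi} \to \nov{\F_p}{G_0}{\pm\phi} \to 0$ for all $p$, the vanishing of $\homol_i(G_0;\nov{\F_p}{G_0}{\pm\phi})$ for $i \leqslant 2$ only shows that $\homol_2(G_0;\nov{\Z}{G_0}{\pm\phi})$ is divisible as an abelian group; the torsion-freeness needed to conclude it vanishes would require vanishing in degree $3$ over $\F_p$, which you do not have. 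Nor can the $\Q$-vanishing be fed in through a universal-coefficient argument, because $\nov{\Q}{G_0}{\pm\phi}$ is strictly larger than $\Q \otimes_\Z \nov{\Z}{G_0}{\pm\phi}$. In effect, field-coefficient information in degrees $\leqslant n$ across all characteristics gives integral vanishing only in degrees $< n$ --- which here is exactly the $\typeFP_1(\Z)$ information you already had for free from finite generation. Your citation of \cref{prop:large_primes} does not help either: that result trades $\Q$-vanishing for $\Z_{\mathcal P}$-vanishing, which is a statement about a localisation of $\Z$, not a tool for descending from fields to $\Z$.

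The paper avoids this entirely. It never passes through $\F_p$; instead, having established $\homol_1(G_0,\mathcal H_{G_0};\nov{\Z}{G_0}{\pm\phi}) = 0$ and $\homol_2(G_0,\mathcal H_{G_0};\nov{\Q}{G_0}{\pm\phi}) = 0$, it takes a finite free resolution of $\Delta_{G_0/\mathcal H_{G_0}}$, builds a chain contraction $s_0$ with \emph{integral} Novikov entries in the bottom degree (from the first vanishing) and $s_1$ with rational entries one degree higher (from the second). For an integral $1$-cocycle $z$, the relation $\bar z = \partial_1 s_0 \bar z$ follows from the cocycle condition, exhibiting $z$ as the coboundary of the integral cochain $s_0 z$. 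This gives $\homol^2(G_0,\mathcal H_{G_0};\nov{\Z}{G_0}{\pm\phi}) = 0$, and Poincar\'e duality plus \cref{Sikorav} yield $\ker\phi$ of type $\typeFP_2(\Z)$, after which \cref{HK} finishes. Your intuition that the passage from field coefficients to $\Z$ is the crux is exactly right; the resolution is a mixed integral-rational chain contraction, not an assembly over all prime fields.
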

\begin{proof}
	Item \eqref{item:PD4_kernel} implies item \eqref{item:b1b2zero} by \cref{mapping torus,betti nrs scaling}, noting that the group $\ker \phi$ and the groups in $(\mathcal H_{G_0})_\phi$ are all of type $\typeFP$ -- for the latter groups this follows from \cref{peripheral PD}. 
	
	We now focus on the converse implication. By potentially passing to an index-two subgroup, we may assume that $(G, \mathcal H)$ is orientable.
	The group $G$ is of cohomological dimension $3$ or $4$ (depending on whether $\mathcal H \neq \varnothing$), and so it is infinite. Similarly, the groups in $\mathcal H$ are all $\PD^3$-groups by \cref{peripheral PD}, and so they are also of cohomological dimension $3$, and thus infinite. We conclude that $\betti{G}{0}{\Q} = 0$ and $\betti{H}{0}{\Q} = 0$ for every $H \in \mathcal H$, see \cite{Lueck2002}*{Theorem 6.54(8)(b)}.

	We claim that $\betti H 1 \Q = 0$ for all $H \in \mathcal H$. Indeed, the long exact sequence of \cref{les for pairs} gives an isomorphism
	\[
		\prod_{H \in \mathcal H} \homol^1(H;\D_{\Q G}) \cong \homol^2(G,\mathcal H;\D_{\Q G}).
	\]
	But $\homol^2(G,\mathcal H;\D_{\Q G}) \cong \homol_2(G;\D_{\Q G}) = 0$ by Poincar\'e duality and by assumption, proving that $\homol^1(H;\D_{\Q G}) = 0$ for every $H \in \mathcal H$. Since $\D_{\Q G}$ is a flat $\D_{\Q H}$-module, as both are skew-fields, we obtain $\homol^1(H;\D_{\Q H}) = 0$ for every $H \in \mathcal H$. Finally, vanishing of $L^2$-homology and $L^2$-cohomology in a given degree is equivalent, and so $\betti H 1 \Q = 0$ for all $H \in \mathcal H$, as claimed.
	
	By \cref{l2 all chars} with $\mathcal K = \{ \Q\}$, $n=2$, and $m=1$, there is a finite-index subgroup $G_0 \leqslant G$ and an epimorphism $\varphi \colon G_0 \rightarrow \Z$ that does not vanish on any of the groups in $\mathcal H_{G_0}$ such that $\ker \phi$ is of type $\typeFP_2(\Q)$, and $\ker \phi|_{H}$ is of type $\typeFP_1(\Q)$ for every $H\in \mathcal H_{G_0}$.
	
	Being of type $\typeFP_1(\Q)$ is equivalent to being finitely generated, which is in turn the same as  being of type $\typeFP_1$. Applying \cref{Sikorav}  now yields
	\[
		\homol_i(G_0; \nov \Z {G_0} {\pm\phi}) = 0 \quad \text{and} \quad \homol_i(H; \nov \Z H {\pm\phi|_H}) = 0
	\]
	for all $H \in \mathcal H_{G_0}$ and $i \in \{0,1\}$. By \cref{Sikorav,HK}, it is therefore enough to prove that
	\[
		\homol_2(G_0;\nov{\Z}{G_0}{\pm\phi})  = 0.
	\]
	Using Poincar\'e duality, this is equivalent to showing that
	\[
	 \homol^2(G_0,\mathcal H_{G_0}; \nov \Z{G_0}{\pm\phi}) = 0,
	\]
	which we will now do.

	The  portion of the long exact sequences in homology associated to the pair $(G_0,\mathcal H_{G_0})$
	\[
		\homol_1(G_0;\nov{\Z}{G_0}{\pm\phi}) \rightarrow \homol_1(G_0,\mathcal H_{G_0};\nov{\Z}{G_0}{\pm\phi}) \rightarrow \bigoplus_{H \in \mathcal H_{G_0}} \homol_0(H; \nov{\Z}{G_0}{\pm\phi})
	\]
	shows that $\homol_1(G_0,\mathcal H_{G_0};\nov{\Z}{G_0}{\pm\phi}) = 0$. Similarly, the portion
	\[
		\homol_2(G_0;\nov{\Q}{G_0}{\pm\phi}) \rightarrow \homol_2(G_0,\mathcal H_{G_0};\nov{\Q}{G_0}{\pm\phi}) \rightarrow \bigoplus_{H \in \mathcal H_{G_0}} \homol_1(H; \nov{\Q}{G_0}{\pm\phi})
	\]
	shows that $\homol_2(G_0,\mathcal H_{G_0};\nov{\Q}{G_0}{\pm\phi}) = 0$, since $\homol_2(G_0;\nov{\Q}{G_0}{\pm\phi}) = 0$ by \cref{Sikorav}.

	Suppose that $\mathcal H_{G_0} \neq \varnothing$. The pair $(G_0, \mathcal H_{G_0})$ is a $\PD^4$-pair, and hence is of type $\typeFP$, which means precisely that $\Delta_{G_0/\mathcal H_{G_0}}$ defined over $\Z$ admits a finite resolution by finitely generated projective $\Z G_0$-modules.
	Thus, fix a resolution
	\[
		\cdots \rightarrow C_2 \rightarrow C_1 \rightarrow C_0 \rightarrow \Delta_{G_0/\mathcal H_{G_0}} \rightarrow 0
	\]
	of $\Delta_{G_0/\mathcal H_{G_0}}$ by finitely generated free $\Z G_0$-modules (passing from projective to free modules potentially makes this resolution infinite in length). The previous paragraph implies that there is a partial chain contraction
	\[
		s_i \colon \novq{G_0}{\pm\phi} \otimes_{\Z G_0} C_i \rightarrow \novq{G_0}{\pm\phi} \otimes_{\Z G_0} C_{i+1}, \quad i \in \{0,1\},
	\]
	of the chain complex $\novq{G_0}{\pm\phi} \otimes_{\Z G_0} C_\bullet$.  
	 Moreover, viewed as a matrix over $\novq{G_0}{\pm\phi}$, the chain contraction $s_0$ can be chosen to have entries in $\nov{\Z}{G_0}{\pm\phi}$. (Here, we are really carrying out two arguments, one for $\phi$ and one for $-\phi$; we use $\pm \phi$ for convenience.)

	Let $z \colon C_1 \rightarrow \nov{\Z}{G_0}{\pm\phi}$ be a $1$-cocycle, and consider its extension
	\[
		\overline z \colon \novq{G_0}{\pm\phi} \otimes_{\Z G_0} C_1 \rightarrow \novq{G_0}{\pm\phi}.
	\]
	Then $\overline z = (\partial_1 s_0 + s_1 \partial_2) \overline z = \partial_1 s_0 \overline z$.  Restricting both sides to $C_1$, we see that $z$ is the coboundary of the cochain $s_0 z \colon C_0 \rightarrow \nov \Z {G_0} {\pm\phi}$. Hence,
	\[
	\homol^2(G_0,\mathcal H_{G_0}; \nov \Z{G_0}{\pm\phi}) =	\operatorname{Ext}_{\Z G_0}^1(\Delta_{G_0/\mathcal H_{G_0}};\nov{\Z}{G_0}{\pm\phi}) =  0,
	\]
	as desired. 
	
	When $\mathcal H_{G_0} = \varnothing$, the argument is completely analogous with the trivial $\Z G$-module  $\Z$ used instead of $\Delta_{G_0/\mathcal H_{G_0}}$, and with indices of the chain contractions increased by one.
\end{proof}

Restricting to the case $\mathcal H = \varnothing$, we obtain the statement advertised in the introduction.

\begin{maincor}[\ref{main dim 4}]
	Let $G$ be a RFRS Poincar\'e-duality group in dimension four. The following are equivalent:
	\begin{enumerate}
		\item There exists a finite-index subgroup $G_0 \leqslant G$ and an epimorphism $\phi \colon G_0 \rightarrow \Z$ such that $\ker \phi$ is an orientable Poincar\'e-duality group of dimension three;
		\item We have  $\betti{G}{1}{\Q}=\betti{G}{2}{\Q} = 0$.
	\end{enumerate}
\end{maincor}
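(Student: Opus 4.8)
The plan is to read this off from the $\PD^4$-pair theorem proved immediately above, specialised to $\mathcal H = \varnothing$. By our conventions a $\PD^4$-pair $(G,\varnothing)$ is just a (possibly non-orientable) $\PD^4$-group, an orientable $\PD^3$-pair $(\ker\phi,\varnothing)$ is just an orientable $\PD^3$-group, the induced tuple $\mathcal H_{G_0}$ is empty so the clause that $\phi$ not vanish on peripheral subgroups is vacuous, and $\betti{G}{i}{\Q}$ is the usual $i$-th $L^2$-Betti number. Under these identifications the two equivalent conditions of that theorem are word-for-word conditions (1) and (2) of the present corollary, so formally there is nothing further to do. What remains is to recall the shape of the argument and to point to where the genuine work lies.

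The implication (1)$\Rightarrow$(2) is soft: an orientable $\PD^3$-group is of type $\typeFP$, so $\phi$ is $\typeFP_\infty(\Q)$-fibred by \cref{Sikorav}; the Mapping Torus Theorem (\cref{mapping torus}) then forces $\betti{G_0}{i}{\Q}=0$ for all $i$, and the scaling formula (\cref{betti nrs scaling}) propagates this down to $\betti{G}{i}{\Q}=0$ for all $i$, in particular for $i=1,2$. For the converse, one first passes to the (index at most two) orientation subgroup to assume $G$ orientable, which preserves the vanishing hypothesis by \cref{betti nrs scaling}; since a $\PD^4_\Z$-group has $\mathrm{cd}_\Z = 4>0$ it is infinite, so $\betti{G}{0}{\Q}=0$ as well, and hence $\betti{G}{i}{\Q}=0$ for all $i\leqslant\lfloor 4/2\rfloor = 2$. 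Feeding this into \cref{l2 all chars} with $\mathcal K=\{\Q\}$ and $n=2$ yields a finite-index subgroup $G_0\leqslant G$ (again orientable) and an epimorphism $\phi\colon G_0\to\Z$ with $\ker\phi$ of type $\typeFP_2(\Q)$. By \cref{Sikorav} this already gives $\homol_i(G_0;\nov{\Q}{G_0}{\pm\phi})=0$ for $i\leqslant 2$ and, since a finitely generated group is of type $\typeFP_1(\Z)$, also $\homol_i(G_0;\nov{\Z}{G_0}{\pm\phi})=0$ for $i\leqslant 1$. Invoking Poincaré duality for $G_0$ and the long exact sequences for the trivial pair, the whole statement reduces to the single vanishing $\homol_2(G_0;\nov{\Z}{G_0}{\pm\phi})=0$, which by Poincaré duality is the same as $\operatorname{Ext}^2_{\Z G_0}(\Z,\nov{\Z}{G_0}{\pm\phi})=\homol^2(G_0;\nov{\Z}{G_0}{\pm\phi})=0$; once this is in hand, \cref{Sikorav} and \cref{HK} upgrade $\ker\phi$ to a bona fide orientable $\PD^3_\Z$-group.

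The heart of the matter is establishing that last vanishing over $\Z$ rather than merely over $\Q$, and this is exactly where the dimension-four coincidence $\lfloor 4/2\rfloor = 4-2$ is exploited. Take a resolution of the trivial $\Z G_0$-module $\Z$ by finitely generated free modules. The vanishing $\homol_i(G_0;\nov{\Z}{G_0}{\pm\phi})=0$ for $i\leqslant 1$ produces a partial chain contraction of length one over $\nov{\Z}{G_0}{\pm\phi}$, which one then extends by one further degree over $\nov{\Q}{G_0}{\pm\phi}$ using $\homol_2(G_0;\nov{\Q}{G_0}{\pm\phi})=0$; the degree-one component of this contraction thus has integral entries, and feeding a $2$-cocycle with values in $\nov{\Z}{G_0}{\pm\phi}$ into the standard ``cocycle is a coboundary'' computation shows that it is an integral coboundary.

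I expect this integrality step to be the main obstacle. The RFRS machinery of \cref{l2 all chars} naturally delivers only a kernel of type $\typeFP_2(\Q)$, whereas a direct appeal to \cref{HK} in dimension four would require type $\typeFP_{\lfloor 4/2\rfloor}(\Z)=\typeFP_2(\Z)$, and even running \cref{thm:PD_pair_fibring} over all prime fields would only produce a $\PD^3$-group over every field, not over $\Z$. Bridging this gap — promoting the degree-one Novikov chain contraction to have $\Z$-coefficients and re-running the chain-contraction argument directly over $\Z$ — is the crux, and it is precisely the low-dimensional coincidence that makes it go through.
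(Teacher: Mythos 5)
Your argument is correct and follows essentially the same route as the paper: the corollary is obtained by specialising the $\PD^4$-pair theorem above it to $\mathcal H = \varnothing$, and your unwinding of that theorem's proof in this case — passing to the orientation subgroup, running \cref{l2 all chars} with $\mathcal K = \{\Q\}$ and $n=2$ to get a $\typeFP_2(\Q)$-fibration, upgrading to $\typeFP_1(\Z)$, and then the crux of producing an integral partial chain contraction $s_0, s_1$ from $\homol_i(G_0;\nov{\Z}{G_0}{\pm\phi})=0$ for $i\leqslant 1$, extending by a rational $s_2$, and feeding a $2$-cocycle with integral Novikov coefficients through $s_1$ to exhibit it as an integral coboundary — is exactly what the paper does (the paper's proof handles $\mathcal H \neq \varnothing$ explicitly and then says the $\mathcal H = \varnothing$ case is analogous with indices shifted up by one, which is precisely the version you wrote out). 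Your identification of the dimension-four coincidence $\lfloor 4/2 \rfloor = 4-2$ as the reason the integrality upgrade goes through is also the right way to think about why this only works in low dimensions.
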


\bibliography{bibliography}

\end{document}